\newdimen\vcadre\vcadre=0.1cm 
\newdimen\hcadre\hcadre=0.1cm 
\def\GrTeXBox#1{\vbox{\vskip\vcadre\hbox{\hskip\hcadre%
      $#1$%
   \hskip\hcadre}\vskip\vcadre}}
\def\arx#1[#2]{\ifcase#1 \relax \or%
  \ar @{-}[#2]  \or%
  \ar @2{-}[#2] \or%
  \ar @{--}[#2] \or%
  \ar @2{.}[#2] \or%
  \ar @{~}[#2]  \fi}
\def\arbgx#1#2{
\newdimen\vcadre\vcadre=0.01cm 
\newdimen\hcadre\hcadre=0.01cm 
\xymatrix@R=0.1cm@C=1mm{
 & {\GrTeXBox{\bullet}}\arx1[dl]\arx1[dr]\\
 {\GrTeXBox{#1}} & *{} & {\GrTeXBox{#2}} \\
}
}
\def\arbgxb#1#2#3{
\newdimen\vcadre\vcadre=0.01cm 
\newdimen\hcadre\hcadre=0.01cm 
\xymatrix@R=0.1cm@C=2mm{
 & {\GrTeXBox{\bullet}}\arx1[dl]\arx1[dr]\\
 {\GrTeXBox{#1}} & *{} & {\GrTeXBox{\bullet}}\arx1[dl]\arx1[dr]\\
 & {\GrTeXBox{#2}} & *{} & {\GrTeXBox{#3}} & *{} \\
}
}
\def\arbgxc#1#2#3{
\newdimen\vcadre\vcadre=0.01cm 
\newdimen\hcadre\hcadre=0.01cm 
\xymatrix@R=0.1cm@C=2mm{
 && {\GrTeXBox{\bullet}}\arx1[dl]\arx1[dr]\\
 & {\GrTeXBox{\bullet}}\arx1[dl]\arx1[dr]  & *{} & {\GrTeXBox{#3}} \\
 {\GrTeXBox{#1}} & *{} & {\GrTeXBox{#2}} & *{} \\
}
}
\newtheorem{example}{Example}[section]
\newtheorem{note}[example]{Note}
\newtheorem{theorem}[example]{Theorem}
\newtheorem{corollary}[example]{Corollary}
\newtheorem{conjecture}[example]{Conjecture}
\newtheorem{proposition}[example]{Proposition}
\newtheorem{lemma}[example]{Lemma}
\def\Proof{\noindent \it Proof -- \rm}
\def\qed{\hspace{3.5mm} \hfill \vbox{\hrule height 3pt depth 2 pt width 2mm}
\bigskip}
\def\gf#1#2{\genfrac{}{}{0pt}{}{#1}{#2}}
\newcommand{\tdelta}{\tilde{\Delta}}
\def\gr{{\rm gr}}
\def\std{{\rm std}}
\def\des{{\rm des}}
\def\maj{{\rm maj}}
\def\FQSym{{\bf FQSym}}
\def\Lie{{\rm Lie}}
\def\<{\langle}
\def\>{\rangle}
\def\ca{{\bf ca}}
\def\U{{\sf U}}
\def\CC{{\mathfrak C}}
\def\F{{\bf F}}
\def\G{{\bf G}}
\def\P{{\bf P}}
\def\SG{{\mathfrak S}}
\def\H{{\bf H}}
\def\K{{\mathbb K}}
\def\Sym{{\bf Sym}}
\def\NCSF{{\bf Sym}}
\def\Des{\operatorname{Des}}
\def\dim{{\rm dim}}
\def\End{\operatorname{End}}
\def\DT{{\rm DT}}
\def\PL{{\rm PL}}
\def\can{{\rm can}}
\def\LC{{\mathcal B}}
\def\SC{{\mathcal L}}
\def\PBT{{\bf PBT}}
\def\shuff#1#2{\mathbin{
\hbox{\vbox{ \hbox{\vrule \hskip#2 \vrule height#1 width 0pt
}%
\hrule}%
\vbox{ \hbox{\vrule \hskip#2 \vrule height#1 width 0pt
\vrule }%
\hrule}%
}}}
\def\shuf{{\mathchoice{\shuff{7pt}{3.5pt}}%
{\shuff{6pt}{3pt}}%
{\shuff{4pt}{2pt}}%
{\shuff{3pt}{1.5pt}}}}%
\def\shuffle{\,\shuf\,}
\def\gaudend{\!\prec\!}   
\def\droitdend{\!\succ\!} 
\def\droit{\triangleright}
\def\ZZ{{\mathbb Z}}    
\def\T{{\sf T}}
\def\c{{\sf c}}
\def\x{{\sf x}}
\def\nesw{{\nearrow\swarrow}}
\def\btr{{\blacktriangleright}}
\newdimen\Squaresize \Squaresize=14pt
\newdimen\Thickness \Thickness=0.5pt
\def\Square#1{\hbox{\vrule width \Thickness
   \vbox to \Squaresize{\hrule height \Thickness\vss
      \hbox to \Squaresize{\hss#1\hss}
   \vss\hrule height\Thickness}
\unskip\vrule width \Thickness}
\kern-\Thickness}
\def\Vsquare#1{\vbox{\Square{$#1$}}\kern-\Thickness}
\def\Tabvrule{\vrule width-0.4pt}       
\def\Tabhrule{\hrule \hrule height-0.4pt} 
\def\Tabstrut{\vrule height2.2ex 
                     depth0.8ex  
                     width0ex    
\relax}
\def\PasCase#1{\omit%
            $\vcenter{\hbox {\vbox to 0.4pt{}}
               \hbox{\makebox[3ex]{\Tabstrut$#1$}}}%
               \Tabvrule$}
\def\PasCasePoint{\PasCase{\cdot}}
\def\DessinCarre#1{%
    \vcenter{\hbox{}\hrule
             \hbox{\vrule\makebox[3ex]{\Tabstrut$#1$}\vrule}\Tabhrule}%
             \Tabvrule}
\def\GenRuban#1{\vcenter{\halign{&$\DessinCarre{##}$\cr#1}}\egroup}
\def\sTabvrule{\vrule width-0.4pt}
\def\sTabhrule{\hrule \hrule height-0.4pt}
\def\sTabstrut{\vrule height1.6ex depth0.6ex width0ex \relax}
\def\sDessinCarre#1{%
    \vcenter{\hbox{}\hrule
             \hbox{\vrule\makebox[2.3ex]%
                  {\sTabstrut$\scriptstyle#1$}\vrule}\sTabhrule}%
             \sTabvrule}
\def\sGenRuban#1{\vcenter{\halign{&$\sDessinCarre{##}$\cr#1}}\egroup}
\def\ruban{%
  \bgroup
  \let\ =\omit
  \let\\=\cr
  \let\x=\times
  \let\.=\PasCasePoint
  \offinterlineskip
  \GenRuban}
\def\sruban{%
  \bgroup
  \let\ =\omit
  \let\x=\times
  \let\\=\cr
  \offinterlineskip
  \sGenRuban}
\def\Btabc{\begin{picture}(3,4)\put(1,1){\circle*{0.7}}\put(2,2){\circle*{0.7}}\put(2,2){\Line(-1,-1)}\put(3,3){\circle*{0.7}}\put(3,3){\Line(-1,-1)}\put(3,3){\circle*{1}}\end{picture} }
\def\Btbac{ \begin{picture}(3,4)\put(1,2){\circle*{0.7}}\put(2,1){\circle*{0.7}}\put(1,2){\Line(1,-1)}\put(3,3){\circle*{0.7}}\put(3,3){\Line(-2,-1)}\put(3,3){\circle*{1}}\end{picture} }
\def\Btacb{ \begin{picture}(3,3)\put(1,1){\circle*{0.7}}\put(2,2){\circle*{0.7}}\put(3,1){\circle*{0.7}}\put(2,2){\Line(1,-1)}\put(2,2){\Line(-1,-1)}\put(2,2){\circle*{1}}\end{picture} }
\def\Btcab{ \begin{picture}(3,4)\put(2,1){\circle*{0.7}}\put(3,2){\circle*{0.7}}\put(2,1){\Line(1,1)}\put(1,3){\circle*{0.7}}\put(1,3){\Line(2,-1)}\put(1,3){\circle*{1}}\end{picture} }
\def\Btcba{ \begin{picture}(3,4)\put(1,3){\circle*{0.7}}\put(2,2){\circle*{0.7}}\put(3,1){\circle*{0.7}}\put(2,2){\Line(1,-1)}\put(1,3){\Line(1,-1)}\put(1,3){\circle*{1}}\end{picture} }
\def\arbuga{\begin{picture}(2,4)\cerg{1}3\cerp{1}1
 \put(1,3){\Line( 0,-2)}
\end{picture}}
\def\arbdga{\begin{picture}(2,6)\cerg15\cerp13\cerp11
 \put(1,5){\Line( 0,-2)}
 \put(1,3){\Line( 0,-2)}
\end{picture}}
\def\arbdgb{\begin{picture}(2,4)\cerg15\cerp13\cerp11
 \put(1,5){\Line( 0,-2)}
 \put(1,3){\Line( 0,-2)}
\end{picture}}
\def\arbdgb{\begin{picture}(3,4)\cerg23\cerp11\cerp31
 \put(2,3){\Line( 1,-2)}
 \put(2,3){\Line(-1,-2)}
\end{picture}}
\def\arbta{\begin{picture}(3,4)\put(1,1){\circle*{0.7}}\put(2,2){\circle*{0.7}}\put(2,2){\Line(-1,-1)}\put(3,3){\circle*{0.7}}\put(3,3){\Line(-1,-1)}\put(3,3){\circle*{1}}\end{picture}}
\def\arbtb{\begin{picture}(3,4)\put(1,2){\circle*{0.7}}\put(2,1){\circle*{0.7}}\put(1,2){\Line(1,-1)}\put(3,3){\circle*{0.7}}\put(3,3){\Line(-2,-1)}\put(3,3){\circle*{1}}\end{picture}}
\def\arbtc{\begin{picture}(3,4)\put(1,3){\circle*{0.7}}\put(2,1){\circle*{0.7}}\put(3,2){\circle*{0.7}}\put(3,2){\Line(-1,-1)}\put(1,3){\Line(2,-1)}\put(1,3){\circle*{1}}\end{picture}}
\def\arbtd{\begin{picture}(3,3)\put(1,1){\circle*{0.7}}\put(2,2){\circle*{0.7}}\put(3,1){\circle*{0.7}}\put(2,2){\Line(-1,-1)}\put(2,2){\Line(1,-1)}\put(2,2){\circle*{1}}\end{picture}}
\def\arbte{\begin{picture}(3,4)\put(1,3){\circle*{0.7}}\put(2,2){\circle*{0.7}}\put(3,1){\circle*{0.7}}\put(2,2){\Line(1,-1)}\put(1,3){\Line(1,-1)}\put(1,3){\circle*{1}}\end{picture}}
\newdimen\vcadre\vcadre=0.2cm 
\newdimen\hcadre\hcadre=0.2cm 
\def\cerp#1#2{\put(#1,#2){\circle*{0.7}}}
\def\cerg#1#2{\put(#1,#2){\circle*{1}}}
\def\arbtga{\begin{picture}(3,4)\cerg23\cerp{.5}1\cerp21\cerp{3.5}1
 \put(2,3){\Line(-1.5,-2)}
 \put(2,3){\Line( 0,-2)}
 \put(2,3){\Line( 1.5,-2)}
\end{picture}}
\def\arbtgb{\begin{picture}(3,6)\cerg25\cerp13\cerp33\cerp11
 \put(2,5){\Line(-1,-2)}
 \put(2,5){\Line( 1,-2)}
 \put(1,3){\Line( 0,-2)}
\end{picture}}
\def\arbtgd{\begin{picture}(3,6)\cerg25\cerp13\cerp33\cerp31
 \put(2,5){\Line(-1,-2)}
 \put(2,5){\Line( 1,-2)}
 \put(3,3){\Line( 0,-2)}
\end{picture}}
\def\arbtgc{\begin{picture}(3,6)\cerg25\cerp23\cerp11\cerp31
 \put(2,5){\Line( 0,-2)}
 \put(2,3){\Line( 1,-2)}
 \put(2,3){\Line(-1,-2)}
\end{picture}}
\def\arbtge{\begin{picture}(3,8)\cerg27\cerp25\cerp23\cerp21
 \put(2,7){\Line( 0,-2)}
 \put(2,5){\Line( 0,-2)}
 \put(2,3){\Line( 0,-2)}
\end{picture}}
\def\cerp#1#2{\put(#1,#2){\circle*{0.7}}}
\def\cerg#1#2{\put(#1,#2){\circle*{1}}}
\def\arbz{\begin{picture}(4,5)\cerp11\cerp22\put(2,2){\Line(-1,-1)}\end{picture}}
\def\arba{\begin{picture}(4,5)\cerp11\cerp22\cerp33\cerg44\put(2,2){\Line(-1,-1)}\put(3,3){\Line(-1,-1)}\put(4,4){\Line(-1,-1)}\end{picture}}
\def\arbb{\begin{picture}(4,5)\put(1,2){\circle*{0.7}}\put(2,1){\circle*{0.7}}\put(1,2){\Line(1,-1)}\put(3,3){\circle*{0.7}}\put(3,3){\Line(-2,-1)}\put(4,4){\circle*{0.7}}\put(4,4){\Line(-1,-1)}\put(4,4){\circle*{1}}\end{picture}}
\def\arbc{\begin{picture}(4,5)\put(1,3){\circle*{0.7}}\put(2,1){\circle*{0.7}}\put(3,2){\circle*{0.7}}\put(3,2){\Line(-1,-1)}\put(1,3){\Line(2,-1)}\put(4,4){\circle*{0.7}}\put(4,4){\Line(-3,-1)}\put(4,4){\circle*{1}}\end{picture}}
\def\arbd{\begin{picture}(4,5)\put(1,2){\circle*{0.7}}\put(2,3){\circle*{0.7}}\put(3,2){\circle*{0.7}}\put(2,3){\Line(-1,-1)}\put(2,3){\Line(1,-1)}\put(4,4){\circle*{0.7}}\put(4,4){\Line(-2,-1)}\put(4,4){\circle*{1}}\end{picture}}
\def\arbe{\begin{picture}(4,5)\put(1,4){\circle*{0.7}}\put(2,1){\circle*{0.7}}\put(3,2){\circle*{0.7}}\put(3,2){\Line(-1,-1)}\put(4,3){\circle*{0.7}}\put(4,3){\Line(-1,-1)}\put(1,4){\Line(3,-1)}\put(1,4){\circle*{1}}\end{picture}}
\def\arbf{\begin{picture}(4,5)\put(1,3){\circle*{0.7}}\put(2,2){\circle*{0.7}}\put(3,1){\circle*{0.7}}\put(2,2){\Line(1,-1)}\put(1,3){\Line(1,-1)}\put(4,4){\circle*{0.7}}\put(4,4){\Line(-3,-1)}\put(4,4){\circle*{1}}\end{picture}}
\def\arbg{\begin{picture}(4,4)\put(1,1){\circle*{0.7}}\put(2,2){\circle*{0.7}}\put(2,2){\Line(-1,-1)}\put(3,3){\circle*{0.7}}\put(4,2){\circle*{0.7}}\put(3,3){\Line(-1,-1)}\put(3,3){\Line(1,-1)}\put(3,3){\circle*{1}}\end{picture}}
\def\arbh{\begin{picture}(4,5)\put(1,4){\circle*{0.7}}\put(2,2){\circle*{0.7}}\put(3,1){\circle*{0.7}}\put(2,2){\Line(1,-1)}\put(4,3){\circle*{0.7}}\put(4,3){\Line(-2,-1)}\put(1,4){\Line(3,-1)}\put(1,4){\circle*{1}}\end{picture}}
\def\arbi{\begin{picture}(4,4)\put(1,2){\circle*{0.7}}\put(2,1){\circle*{0.7}}\put(1,2){\Line(1,-1)}\put(3,3){\circle*{0.7}}\put(4,2){\circle*{0.7}}\put(3,3){\Line(-2,-1)}\put(3,3){\Line(1,-1)}\put(3,3){\circle*{1}}\end{picture}}
\def\arbj{\begin{picture}(4,4)\put(1,2){\circle*{0.7}}\put(2,3){\circle*{0.7}}\put(3,1){\circle*{0.7}}\put(4,2){\circle*{0.7}}\put(4,2){\Line(-1,-1)}\put(2,3){\Line(-1,-1)}\put(2,3){\Line(2,-1)}\put(2,3){\circle*{1}}\end{picture}}
\def\arbk{\begin{picture}(4,5)\put(1,4){\circle*{0.7}}\put(2,3){\circle*{0.7}}\put(3,1){\circle*{0.7}}\put(4,2){\circle*{0.7}}\put(4,2){\Line(-1,-1)}\put(2,3){\Line(2,-1)}\put(1,4){\Line(1,-1)}\put(1,4){\circle*{1}}\end{picture}}
\def\arbl{\begin{picture}(4,5)\put(1,4){\circle*{0.7}}\put(2,2){\circle*{0.7}}\put(3,3){\circle*{0.7}}\put(4,2){\circle*{0.7}}\put(3,3){\Line(-1,-1)}\put(3,3){\Line(1,-1)}\put(1,4){\Line(2,-1)}\put(1,4){\circle*{1}}\end{picture}}
\def\arbm{\begin{picture}(4,4)\put(1,2){\circle*{0.7}}\put(2,3){\circle*{0.7}}\put(3,2){\circle*{0.7}}\put(4,1){\circle*{0.7}}\put(3,2){\Line(1,-1)}\put(2,3){\Line(-1,-1)}\put(2,3){\Line(1,-1)}\put(2,3){\circle*{1}}\end{picture}}
\def\arbn{\begin{picture}(4,5)\put(1,4){\circle*{0.7}}\put(2,3){\circle*{0.7}}\put(3,2){\circle*{0.7}}\put(4,1){\circle*{0.7}}\put(3,2){\Line(1,-1)}\put(2,3){\Line(1,-1)}\put(1,4){\Line(1,-1)}\put(1,4){\circle*{1}}\end{picture}}
\def\arbga{\begin{picture}(5,4)\cerp{.5}1\cerp21\cerp{3.5}1\cerp51
 \cerg{2.75}3
 \put(2.75,3){\Line(-2.25,-2)}
 \put(2.75,3){\Line(-.75,-2)}
 \put(2.75,3){\Line( .75,-2)}
 \put(2.75,3){\Line( 2.25,-2)}
\end{picture}}
\def\arbgb{\begin{picture}(5,5)\cerp{.5}0\cerp{.5}2\cerp22\cerp{3.5}2\cerg{2}4
 \put(2,4){\Line(-1.5,-2)}
 \put(2,4){\Line( 0,-2)}
 \put(2,4){\Line( 1.5,-2)}
 \put(.5,2){\Line( 0,-2)}
\end{picture}}
\def\arbgc{\begin{picture}(5,6)\cerp{1}1\cerp31\cerp23\cerp43\cerg35
 \put(3,5){\Line(-1,-2)}
 \put(3,5){\Line( 1,-2)}
 \put(2,3){\Line(-1,-2)}
 \put(2,3){\Line( 1,-2)}
\end{picture}}
\def\arbgd{\begin{picture}(5,5)\cerp{2}0\cerp{.5}2\cerp22\cerp{3.5}2\cerg{2}4
 \put(2,4){\Line(-1.5,-2)}
 \put(2,4){\Line( 0,-2)}
 \put(2,4){\Line( 1.5,-2)}
 \put(2,2){\Line( 0,-2)}
\end{picture}}
\def\arbge{\begin{picture}(5,6)\cerp{1.5}1\cerp{3}1\cerp{4.5}1\cerp{3}3\cerg{3}5
 \put(3,5){\Line( 0,-2)}
 \put(3,3){\Line( 0,-2)}
 \put(3,3){\Line( -1.5,-2)}
 \put(3,3){\Line( 1.5,-2)}
\end{picture}}
\def\arbgf{\begin{picture}(4,8)\cerp{1}1\cerp{1}3\cerp15\cerp{3}5\cerg{2}7
 \put(2,7){\Line(-1,-2)}
 \put(2,7){\Line( 1,-2)}
 \put(1,5){\Line( 0,-2)}
 \put(1,3){\Line( 0,-2)}
\end{picture}}
\def\arbgg{\begin{picture}(5,5)\cerp{3.5}0\cerp{.5}2\cerp22\cerp{3.5}2\cerg{2}4
 \put(2,4){\Line(-1.5,-2)}
 \put(2,4){\Line( 0,-2)}
 \put(2,4){\Line( 1.5,-2)}
 \put(3.5,2){\Line( 0,-2)}
\end{picture}}
\def\arbgh{\begin{picture}(5,8)\cerg{2}7\cerp{2}5\cerp13\cerp{3}3\cerp{1}1
 \put(2,7){\Line( 0,-2)}
 \put(2,5){\Line(-1,-2)}
 \put(2,5){\Line( 1,-2)}
 \put(1,3){\Line( 0,-2)}
\end{picture}}
\def\arbgi{\begin{picture}(5,6)\cerg{2}5\cerp{1}3\cerp33\cerp{1}1\cerp{3}1
 \put(2,5){\Line(-1,-2)}
 \put(2,5){\Line( 1,-2)}
 \put(1,3){\Line( 0,-2)}
 \put(3,3){\Line( 0,-2)}
\end{picture}}
\def\arbgj{\begin{picture}(5,6)\cerg{2}5\cerp{1}3\cerp33\cerp{2}1\cerp{4}1
 \put(2,5){\Line(-1,-2)}
 \put(2,5){\Line( 1,-2)}
 \put(3,3){\Line(-1,-2)}
 \put(3,3){\Line( 1,-2)}
\end{picture}}
\def\arbgk{\begin{picture}(4,8)\cerg{2}7\cerp{2}5\cerp23\cerp{1}1\cerp{3}1
 \put(2,7){\Line( 0,-2)}
 \put(2,5){\Line( 0,-2)}
 \put(2,3){\Line(-1,-2)}
 \put(2,3){\Line( 1,-2)}
\end{picture}}
\def\arbgl{\begin{picture}(4,8)\cerg{2}7\cerp{2}5\cerp13\cerp{3}3\cerp{3}1
 \put(2,7){\Line( 0,-2)}
 \put(2,5){\Line(-1,-2)}
 \put(2,5){\Line( 1,-2)}
 \put(3,3){\Line( 0,-2)}
\end{picture}}
\def\arbgm{\begin{picture}(4,8)\cerg{2}7\cerp{1}5\cerp35\cerp{3}3\cerp{3}1
 \put(2,7){\Line(-1,-2)}
 \put(2,7){\Line( 1,-2)}
 \put(3,5){\Line( 0,-2)}
 \put(3,3){\Line( 0,-2)}
\end{picture}}
\def\arbgn{\begin{picture}(2,10)\cerg{1}9\cerp{1}7\cerp15\cerp{1}3\cerp{1}1
 \put(1,9){\Line( 0,-2)}
 \put(1,7){\Line( 0,-2)}
 \put(1,5){\Line( 0,-2)}
 \put(1,3){\Line( 0,-2)}
\end{picture}}
\def\arbcga{\begin{picture}(5,6)
\cerp{1}1\cerp31\cerp23\cerp53\cerg{3.5}5\cerp51
 \put(3.5,5){\Line(-1.5,-2)}
 \put(3.5,5){\Line( 1.5,-2)}
 \put(2,3){\Line(-1,-2)}
 \put(2,3){\Line( 1,-2)}
 \put(5,1){\Line( 0, 2)}
\end{picture}}
\def\arbcgb{\begin{picture}(5,6)
\cerg{1.5}5\cerp{0}3\cerp33\cerp{2}1\cerp{4}1\cerp01
 \put(1.5,5){\Line(-1.5,-2)}
 \put(1.5,5){\Line( 1.5,-2)}
 \put(3,3){\Line(-1,-2)}
 \put(3,3){\Line( 1,-2)}
 \put(0,1){\Line( 0, 2)}
\end{picture}}
\def\arbcgc{\begin{picture}(5,8)\cerp{1}1\cerp31\cerp23\cerp43\cerp35\cerg37
 \put(3,7){\Line( 0,-2)}
 \put(3,5){\Line(-1,-2)}
 \put(3,5){\Line( 1,-2)}
 \put(2,3){\Line(-1,-2)}
 \put(2,3){\Line( 1,-2)}
\end{picture}}
\def\arbcgd{\begin{picture}(5,8)
\cerg{2}7\cerp25\cerp{1}3\cerp33\cerp{2}1\cerp{4}1
 \put(2,7){\Line( 0,-2)}
 \put(2,5){\Line(-1,-2)}
 \put(2,5){\Line( 1,-2)}
 \put(3,3){\Line(-1,-2)}
 \put(3,3){\Line( 1,-2)}
\end{picture}}
\def\arbcge{\begin{picture}(4,10)\
\cerg{2}9\cerp27\cerp{2}5\cerp23\cerp{1}1\cerp{3}1
 \put(2,9){\Line( 0,-2)}
 \put(2,7){\Line( 0,-2)}
 \put(2,5){\Line( 0,-2)}
 \put(2,3){\Line(-1,-2)}
 \put(2,3){\Line( 1,-2)}
\end{picture}}
\title[Quadri-algebras, preLie algebras, and Lie idempotents]{Quadri-algebras, preLie algebras,\\ and the Catalan family of Lie idempotents}
\author[L. Foissy, F. Menous, J.-C.~Novelli and J.-Y.~Thibon]%
{Lo\"ic Foissy, Fr\'ed\'eric Menous,\\ Jean-Christophe Novelli and Jean-Yves
Thibon}
\address[Foissy]{
F\'ed\'eration de Recherche Math\'ematique du Nord Pas de Calais FR 2956\\
Laboratoire de Math\'ematiques Pures et Appliqu\'ees Joseph Liouville\\
Universit\'e du Littoral C\^ote d'Opale-Centre Universitaire de la Mi-Voix\\ 
50, rue Ferdinand Buisson, CS 80699,  62228 Calais Cedex, France}
\address[Menous]{Laboratoire de Math\'ematiques\\
B\^at. 425\\
Universit\'e Paris-Sud\\
91405 Orsay Cedex\\
France}
\address[Novelli, Thibon] {Laboratoire d'Informatique Gaspard-Monge, Universit\'e Gustave Eiffel, CNRS, ENPC, ESIEE-Paris,\\
5 Boulevard Descartes \\Champs-sur-Marne \\77454 Marne-la-Vall\'ee cedex 2 \\
FRANCE}
\email[Lo\"ic Foissy]{foissy@univ-littoral.fr}
\email[Fr\'ed\'eric Menous]{Frederic.Menous@u-psud.fr }
\email[Jean-Christophe Novelli]{novelli@univ-mlv.fr}
\email[Jean-Yves Thibon]{jyt@univ-mlv.fr} 
\date{\today}
\begin{document}

\begin{abstract}
We compute the expansion of the Catalan family of Lie idempotents introduced
in [Menous et al., Adv. Applied Math. 51 (2013), 177-22] on the PBW basis of
the Lie module.
It is found that the coefficient of a tree depends only on its number of left
and right internal edges. In particular, the Catalan idempotents belong to a
preLie algebra based on naked binary trees, of which we identify several Lie
and preLie subalgebras.
\end{abstract}

\maketitle

\section{Introduction}

A \emph{Lie idempotent} is an idempotent of the group algebra of the symmetric
group which acts on the free associative algebra as a projector onto the free
Lie algebra~\cite{Re}.

Historically, the first example of a Lie idempotent is provided by Dynkin's
theorem (1947,\cite{Dyn1}):
The linear map 
$\Theta_n:\ a_1a_2\cdots a_n \mapsto[\cdots [a_1,a_2],\cdots ],a_n],$
sending a word to its iterated bracketing, satisfies
$\Theta_n\circ \Theta_n=n\Theta_n$.
This result, originally intended as a mean of expanding the Hausdorff series 
$H(a_1,\ldots,a_N)=\log(e^{a_1}\cdots e^{a_N}) $in terms of
commutators, can also be used to give a new proof of the fact that $H$ is a
Lie series, and to prove the Poincar\'e-Birkhoff-Witt theorem. In particular
one can deduce from it most basic facts about free Lie algebras, such as
Friedrich's criterion \cite{Cartier}.

In 1969, Solomon \cite{So1} introduced another Lie idempotent, with the aim of
providing a constructive proof of the Poincar\'e-Birkhoff-Witt theorem, that
is, an explicit isomorphism between the universal enveloping algebra of a Lie
algebra and its symmetric algebra.
It turns out that this idempotent is also related to the Hausdorff series: it
computes its expansion in terms of words, and has been rediscovered several
times in this context (see, {\it e.g.},\cite{BMP,MP}).
Solomon's idempotent is also known as the first Eulerian idempotent
\cite{Lod89}.

Finally, Witt's formulas \cite{Witt} for the dimensions of the
multihomogeneous components of the free Lie algebra $L(V)$ generated by a
vector space $V$ show that, as a representation of $GL(V)$, $L_n(V)$ is
isomorphic to the eigenspace with eigenvalue $e^{2i\pi/n}$ of the cyclic shift
operator acting on $V^{\otimes n}$. An explicit interwining operator for these
representations is provided by Klyachko's idempotent \cite{Kl}, introduced in
1974.

Prior to the introduction of noncommutative symmetric functions \cite{NCSF1},
these three examples, which are given by very different expressions, were the
only known Lie idempotents. They have however one important common point: they
all belong to the \emph{descent algebra}, a remarkable subalgebra of the group
algebra of the symmetric group, introduced by Solomon \cite{Sol} in 1976. The
descent algebra is a noncommutative version of the character ring of the
symmetric group, and noncommutative symmetric functions are to the descent
algebra what ordinary symmetric functions are to the character ring.  In
particular, on can lift to the descent algebra various operations such as the
$(1-q)$-transform and its inverse \cite{NCSF2}. This allowed to give a
complete characterization of Lie idempotents in the descent algebra, and to
provide an explicit interpolation between all known examples.

It came therefore as a surprise that a sequence of operators defined in the
context of resurgence theory \cite{Men}, when interpreted as noncommutative
symmetric functions, thanks to an isomophism with \'Ecalle's Hopf algebra of
alien operators \cite{MNT}, provided a new family of Lie idempotents of the
descent algebras, not explained by the previous constructions. 

In the sequel, we shall give a much simpler expression of these idempotents,
in terms of the recently introduced PBW basis of the Lie module \cite{ST}. It
is however not clear on this expression that the result belongs to the descent
algebra. In the process of investigating the symmetries of Lie idempotents in
this basis, we have been led to the discovery of various Lie and preLie
subalgebras of the convolution Lie algebra of permutations. The PBW expansion
of the Solomon idempotent, determined by Bandiera and Sch\"atz \cite{BS}, also
satifies the same type of symmetry. It involves a Lie algebra of binary trees,
of which we provide an alternative interpretation. This Lie algebra have
remarkable subalgebras, whose bases are parametrized by families of bicolored
trees. We prove that Lie elements of the descent algebras all belong to one of
these subalgebras and conjecture a similar statement for Lie elements of the
Loday-Ronco algebra.

\section{Background}

\subsection{Lie idempotents}

Let $V$ be a vector space over some field $\K$ of characteristic~0.
Let $T(V)$ be its tensor algebra, and $L(V)$ the free Lie algebra
generated by $V$. We denote by $L_n(V)=L(V)\cap V^{\otimes n}$ its homogeneous
component of degree $n$.
The group algebra $\K\SG_n$ of the symmetric group acts on the right on
$V^{\otimes n}$ by
\begin{equation}
(v_1\otimes v_2\otimes\cdots\otimes v_n)\cdot\sigma
=
v_{\sigma(1)}\otimes v_{\sigma(2)}\otimes\cdots\otimes v_{\sigma(n)}\,.
\end{equation}

This action commutes with the left action of $GL(V)$,
and when $\dim\, V \ge n$, which we shall usually assume, these actions are
the commutant of each other
(Schur-Weyl duality).

Any $GL(V)$-equivariant projector $\Pi_n:\ V^{\otimes n}\rightarrow L_n(V)$
can therefore be regarded as an idempotent $\pi_n$ of $\K\SG_n$:
$\Pi_n({\bf v})={\bf v}\cdot\pi_n$. By definition
(cf. \cite{Re}), such an element is called a {\em Lie idempotent}
whenever its image is $L_n(V)$. 
Then, a homogeneous element $P_n\in V^{\otimes n}$ is in $L_n(V)$
if and only if $P_n\pi_n=P_n$.

From now on, we fix a basis $A=\{a_1,a_2,\ldots\}$ of $V$. We identify
$T(V)$ with the free associative algebra $\K\<A\>$, and $L(V)$
with the free Lie algebra $L(A)$.

\subsection{Convolution algebras}

For $\sigma\in\SG_n$, let $g_\sigma$ be the endomorphism of $\K_n\<A\>$
defined by
\begin{equation}
g_\sigma(a_1 a_2\cdots a_n)
 = a_1a_2\cdots a_n\cdot\sigma
 = a_{\sigma(1)}a_{\sigma(2)}\cdots a_{\sigma(n)}.
\end{equation}
The free algebra $\K\<A\>$ is a graded bialgebra for the coproduct 
\begin{equation}
\Delta(a) = a\otimes 1 + 1\otimes a,\quad (a\in A),
\end{equation}
so that a convolution product on the space of graded
endomorphisms $\End_{\gr}(\K\<A\>)$ can be defined by
\begin{equation}
f\star g (w) = \mu\circ(f\otimes g)\circ\Delta(w),
\end{equation}
where $\mu$ denotes the multiplication of $\K\<A\>$ ({\it i.e.}, the
concatenation product).

For permutations, this operation reads
\begin{equation}\label{eq:star}
g_\alpha\star g_\beta
 = \sum_{\gf{\gf{\std(u)=\alpha}{\std(v)=\beta}}{\gamma=uv}}g_\gamma,
\end{equation}
where $\std(u)$ denotes the standardization of the word $u$, that is, the
unique permutation having the same inversions as $u$.

We also define the $\star$ product on permutations by the same formula.
This convolution has been extensively studied by Reutenauer \cite{Re86,Re}
while investigating free Lie algebras and Lie idempotents. 

When $A$ is finite, the $g_\sigma$ are not linearly independent anymore when
the size of~$\sigma$ exceeds the cardinality of $A$. By taking an appropriate
inverse limit over an increasing sequence of alphabets, one obtains a
convolution algebra $\K\SG$ based on all permutations, and it has been shown
by Malvenuto and Reutenauer \cite{MR} that this algebra acquires a Hopf
algebra structure when endowed with the coproduct
\begin{equation}
\Delta \sigma = \sum_{u,v}\<\sigma,u\shuffle v\>{\std(u)}\otimes {\std(v)},
\end{equation}
where $\<\sigma,u\shuffle v\>$ denotes the coefficient of $\sigma$ in the
shuffle product $u\shuffle v$ of the words $u$ and $v$.

\subsection{Free quasi-symmetric functions}

The Malvenuto-Reutenauer Hopf algebra admits a convenient
``polynomial realization''.

The algebra of \emph{free quasi-symmetric functions} over a totally ordered
alphabet $A$, denoted by $\FQSym(A)$, is the algebra spanned by the
noncommutative ``stable polynomials'' (formal series of bounded degree,
defined for any totally ordered alphabet)
\begin{equation}
\G_\sigma(A)  := \sum_{\gf{w\in A^n}{\std(w)=\sigma}} w,
\end{equation}
where $\sigma$ is a permutation in the symmetric group $\SG_n$ \cite{NCSF6}.

Its multiplication rule turns out to be given by 
\begin{equation}
\G_\alpha \G_\beta = \sum_{
\gf{\std(u)=\alpha}{\gf{\std(v)=\beta}{\gamma=uv}}}
\G_\gamma,
\end{equation}
as in the Malvenuto-Reutenauer algebra $\K\SG$. Moreover, its natural coproduct,
defined by the ordinal sum $A\oplus B$ of two mutually commuting alphabets
\begin{equation}
\Delta \G_\sigma := 
\G_\sigma(A\oplus B)=
\sum_{u,v}\<\sigma,u\,\shuffle\, v\>\G_{\std(u)}\otimes \G_{\std(v)},
\end{equation}
also coincides with the Malvenuto-Reutenauer coproduct (identifying
$F(A)G(B)$ with $F\otimes G$).

Thus, $\FQSym$ is isomorphic to $\K\SG$ as a Hopf algebra. 

In the sequel, we shall often identify permutations $\sigma$ with the
corresponding $\G_\sigma$ without further notice.

\subsection{Dendriform structure}

A dendriform algebra~\cite{Lod0,Lod} is an associative algebra
$({\mathcal A},\cdot)$ 
endowed with two bilinear operations $\gaudend$, $\droitdend$,
such that 
\begin{equation}
a\cdot b = a \gaudend b + a \droitdend b\,,
\end{equation}
satisfying the relations
\begin{equation}
(x\gaudend y)\gaudend z = x\gaudend (y\cdot z)\,,
(x\droitdend y)\gaudend z = x\droitdend (y\gaudend z)\,,
(x\cdot y)\droitdend z = x\droitdend (y\droitdend z)\,.
\end{equation}

The dendriform structure of $\FQSym$ is inherited from that of the free
associative algebra over $A$, which is~\cite{NT1,NT2}
\begin{eqnarray}
u\gaudend v=
\begin{cases}
uv &\mbox{if $\max(v) < \max(u)$}\\
0 &\mbox{otherwise},\\
\end{cases}\\
u\droitdend v=
\begin{cases}
uv &\mbox{if $\max(v)\geq\max(u)$}\\
0 &\mbox{otherwise}.\\
\end{cases}
\end{eqnarray}

This yields
\begin{equation}
\G_\alpha \G_\beta = \G_\alpha \gaudend \G_\beta + \G_\alpha \droitdend
\G_\beta\,,
\end{equation}
where
\begin{equation}\label{eq:gauche}
\G_\alpha \gaudend \G_\beta =
\sum_{\gf{\gamma=uv \in \alpha\star \beta}{|u|=|\alpha| ;\, \max(v)<\max(u)}}
  \G_\gamma\,,
\end{equation}
\begin{equation}\label{eq:droit}
\G_\alpha \droitdend \G_\beta =
\sum_{\gf{\gamma=u.v\in \alpha\star\beta}{|u|=|\alpha| ;\, \max(v)\geq\max(u)}}
   \G_\gamma\,.
\end{equation}
where $\alpha\star\beta$ is interpreted as the set of permutations occuring in the convolution.
Then $x=\G_1$ generates a free dendriform algebra in $\FQSym$, isomorphic to
$\PBT$, the Loday-Ronco algebra of planar binary trees~\cite{LR1}.

\subsection{Quadri-algebras}

The half-products of $\FQSym$, which are defined by splitting the
concatenation of two words according to the position of the greatest letter
can again be refined according to the position of the smallest one:
for $\alpha\in\SG_k$, $\beta\in\SG_l$ and $n=k+l$, 

\begin{align}
\G_\alpha \nwarrow \G_\beta
  & = \sum_{\gf{\gamma=uv \in \alpha\star\beta}{1, n\in u}} \G_\gamma\,,\\
\G_\alpha \swarrow \G_\beta
  & = \sum_{\gf{\gamma=uv \in \alpha\star\beta}{1\in v,\ n\in u}} \G_\gamma\,,\\
\G_\alpha \searrow \G_\beta
  & = \sum_{\gf{\gamma=uv \in \alpha\star\beta}{1,n\in v}} \G_\gamma\,,\\
\G_\alpha \nearrow \G_\beta
  & = \sum_{\gf{\gamma=uv \in \alpha\star\beta}{1\in u,n\in v}} \G_\gamma\,.
\end{align}
The relations satisfied by these partial products have led Aguiar and Loday to
the notion of \emph{quadri-algebra}~\cite{AL}.

A quadri-algebra is a family $(A,\nwarrow,\swarrow,\searrow,\nearrow)$, where
$A$ is a vector space and $\nwarrow$, $\swarrow$, $\searrow$, $\nearrow$ are
products on $A$, such that for all $x,y,z \in A$:

{\footnotesize
\begin{align}
  (x\nwarrow y)\nwarrow z     & =x \nwarrow (y\star z),
 &(x\nearrow y) \nwarrow z    & =x \nearrow (y\leftarrow z),
 &(x \uparrow y) \nearrow z   & =x \nearrow (y \rightarrow z),\\
  (x\swarrow y)\nwarrow z     & =x \swarrow (y\uparrow z),
 &(x\searrow y) \nwarrow z    & =x \searrow (y\nwarrow z),
 &(x \downarrow y) \nearrow z & =x \searrow (y \nearrow z),\\
  (x\leftarrow y)\swarrow z   & =x \swarrow (y\downarrow z),
 &(x\rightarrow y) \swarrow z & =x \searrow (y\swarrow z),
 &(x \star y) \searrow z      & =x \searrow (y \searrow z),
\end{align}
where:
\begin{align}
\leftarrow  & = \nwarrow+\swarrow, &
\rightarrow & = \nearrow+\searrow, &
\uparrow    & = \nwarrow+\nearrow, &
\downarrow  & = \swarrow+\searrow,
\end{align} 
\begin{equation}
\star = \nwarrow+\swarrow+\searrow+\nearrow
      = \leftarrow+\rightarrow
      = \uparrow+\downarrow.
\end{equation}
}

The augmentation ideal $\FQSym_+$ of the Hopf algebra $\FQSym$ is a
quadri-algebra for these operations \cite{AL}, up to the involution
$\sigma \mapsto \sigma^{-1}$. 

As $\FQSym$ is self-dual, its coproduct can also be split into four parts,
making it a quadri-coalgebra.
Dualizing, with the pairing defined by
$\langle \G_\sigma,\G_\tau\rangle=\delta_{\sigma,\tau^{-1}}$, 
we obtain a quadri-coalgebra structure on $\FQSym_+$, defined by: 
\begin{align}
\Delta_\nwarrow(\G_\sigma)
  &= \sum_{\sigma(1),\sigma(n) \leq i<n}
    \G_{\sigma_{\{1,\ldots,i\}}}\otimes \G_{\std(\sigma_{\{1+i,\ldots,n\}})}\\
\Delta_\swarrow(\G_\sigma)
  &= \sum_{\sigma(n) \leq i < \sigma(1)}
    \G_{\sigma_{\{1,\ldots,i\}}}\otimes \G_{\std(\sigma_{\{1+i,\ldots,n\}})}\\
\Delta_\searrow(\G_\sigma)
  &= \sum_{1\leq i< \sigma(1) ,\sigma(n)}
    \G_{\sigma_{\{1,\ldots,i\}}}\otimes \G_{\std(\sigma_{\{1+i,\ldots,n\}})}\\
\Delta_\nearrow(\G_\sigma)
 &= \sum_{\sigma(1) \leq i <\sigma(n)}
    \G_{\sigma_{\{1,\ldots,i\}}}\otimes \G_{\std(\sigma_{\{1+i,\ldots,n\}})},
\end{align}
where for all $I\subseteq \{1,\ldots,n\}$, $\sigma_I$ is the word obtained by
deleting in $\sigma$ the letters which do not belong to $I$.
The sum of the four coproducts is the ususal coproduct $\tdelta$:
\begin{align}
\tdelta(\G_\sigma)
& = \sum_{1 \leq i <n}
    \G_{\sigma_{\{1,\ldots,i\}}}\otimes \G_{\std(\sigma_{\{1+i,\ldots,n\}})}.
\end{align}
Moreover, for any $a,b\in \FQSym$:
\begin{align}
\tdelta(a\nwarrow b)
 & = a'\uparrow b\otimes a''+a'\otimes a''\leftarrow b
    +a'\uparrow b'\otimes a''\leftarrow b'',\\
\tdelta(a\swarrow b)
 & = b\otimes a+a'\downarrow b\otimes a''+b'\otimes a\leftarrow b''
    +a'\downarrow b'\otimes a''\leftarrow b'',\\
\tdelta(a\searrow b)
 & = a\downarrow b'\otimes b''+b'\otimes a\rightarrow b''
    +a'\downarrow b'\otimes a''\rightarrow b'',\\
\tdelta(a\nearrow b)
 & = a\otimes b+a\uparrow b'\otimes b''+a'\otimes a''\rightarrow b
    +a'\uparrow b'\otimes a''\rightarrow b''.
\end{align}

As a consequence, if $a$ and $b$ are two primitive elements of $\FQSym$:
\begin{align}
\tdelta(a\nearrow b-b\swarrow a)&=a\otimes b-a\otimes b=0,
\end{align}
so $a\nearrow b-b\swarrow a$ is also primitive.

\begin{proposition}
The operation
\begin{equation}
x\nesw y = x\nearrow y - y \swarrow x
\end{equation}
preserves primitive elements.
\qed
\end{proposition}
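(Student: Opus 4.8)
The plan is to read primitivity of $x\nesw y$ directly off the quadri-coalgebra compatibility formulas for $\tdelta$ displayed just above, rather than to recompute anything from the definitions of $\nearrow$ and $\swarrow$. Since $\tdelta$ is the reduced coproduct with values in $\FQSym_+\otimes\FQSym_+$, an element $z$ is primitive exactly when $\tdelta(z)=0$; writing $\tdelta(z)=z'\otimes z''$ in Sweedler notation, primitivity of $x$ and $y$ means $x'\otimes x''=0$ and $y'\otimes y''=0$.

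First I would substitute these vanishings into the formula for $\tdelta(x\nearrow y)$. Each of its terms other than the leading $x\otimes y$ carries a full tensor factor $x'\otimes x''$ or $y'\otimes y''$ inside a linear operation built from $\uparrow$, $\rightarrow$ and $\otimes$, so all of them die and only $\tdelta(x\nearrow y)=x\otimes y$ survives. Reading the formula for $\tdelta(a\swarrow b)$ with $a=y$ and $b=x$ gives
\[
\tdelta(y\swarrow x)
 = x\otimes y+y'\downarrow x\otimes y''+x'\otimes y\leftarrow x''
    +y'\downarrow x'\otimes y''\leftarrow x'',
\]
and the same annihilation leaves $\tdelta(y\swarrow x)=x\otimes y$. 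Subtracting, $\tdelta(x\nesw y)=x\otimes y-x\otimes y=0$, which is precisely primitivity of $x\nesw y$.

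There is no genuine obstacle here beyond bookkeeping, since all the real work sits in the compatibility formulas that are assumed. The one point that must be checked carefully is the order of the arguments: the leading term of $\tdelta(a\swarrow b)$ is $b\otimes a$, so it is the combination $x\nearrow y-y\swarrow x$ --- and not, say, $x\nearrow y-x\swarrow y$ --- whose leading terms are both $x\otimes y$ and hence cancel. This transposition is exactly what the definition $x\nesw y=x\nearrow y-y\swarrow x$ encodes, and it is what guarantees that the difference lands in the primitive subspace.
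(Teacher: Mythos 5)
Your proof is correct and follows exactly the paper's route: the paper also reads primitivity of $x\nesw y$ off the displayed compatibility formulas for $\tdelta$ with $\nearrow$ and $\swarrow$, computing $\tdelta(a\nearrow b-b\swarrow a)=a\otimes b-a\otimes b=0$ for primitive $a,b$. Your remark about the argument transposition (leading term $b\otimes a$ in $\tdelta(a\swarrow b)$) is the same cancellation the paper relies on.
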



\subsection{Solomon's descent algebra}

The descent algebras have been introduced by Solomon \cite{Sol} for general
finite Coxeter groups in the following way. Let $(W,S)$ be a Coxeter system.
One says that $w\in W$ has a descent at $s\in S$ if $w$ has a reduced word
ending by $s$.
For $W=\SG_n$ and $s_i=(i,i+1)$, this means that $w(i)>w(i+1)$, whence the
terminology. In this case, we rather say that $i$ is a descent of $w$.
Let $\Des(w)$ denote the descent set of $w$, and for a subset $E\subseteq S$,
set
\begin{equation}
D_E = \sum_{\Des(w)=E} w \ \ \in \ZZ W \,.
\end{equation}
Solomon has shown that the $D_E$ span a $\ZZ$-subalgebra $\Sigma_n$
of $\ZZ W$. Moreover,
\begin{equation}
D_{E'}D_{E''}=\sum_E c^E_{E'E''} D_E,
\end{equation}
where the coefficients $c^E_{E'E''}$ are nonnegative integers.

In the case of $\SG_n$, it is convenient to encode descent sets by
compositions of $n$. If $E=\{d_1,\ldots,d_{r-1}\}$, we set $d_0=0$, $d_r=n$
and $I=C(E)=(i_1,\ldots,i_r)$, where $i_k=d_k-d_{k-1}$.
We also say that $E$ is the descent set of $I$. From now on, we shall write
$D_I$ instead of $D_E$. We shall also write $C(\sigma)=I$ if the descent set of
$\sigma$ is $E$.

Most known examples of Lie idempotents turn out to belong to the descent
algebra $\Sigma_n$.
The simplest (and oldest) example is the \emph{Dynkin idempotent} \cite{Dyn1}
\begin{equation}\label{eq:dyn}
\theta_n
 = \frac1n [\ldots[[[1,2],3],\ldots,],n]
 = \frac1n \sum_{k=0}^{n-1}(-1)^kD_{1^k,n-k}.
\end{equation}
The Solomon idempotent \cite{So1}, or first Eulerian idempotent is
\begin{equation}\label{eq:sol}
\phi_n
  = \frac1n
    \sum_{\sigma\in\SG_n}
        \frac{(-1)^{d(\sigma)}}{\binom{n-1}{d(\sigma)}} \sigma
\end{equation}
and the Klyachko idempotent \cite{Kl} is
\begin{equation}
\kappa_n
  = \frac1n
    \sum_{\sigma\in\SG_n}\omega^{\maj(\sigma)}\sigma,
\end{equation}
where $d(\sigma)$ is the number of descents of $\sigma$, $\maj(\sigma)$ is their sum,
and $\omega=e^{2i\pi/n}$.
Note that on these expressions, one easily sees that both the Solomon
idempotent and the Klyachko idempotent live inside the descent algebra since
the coefficient of a permutation $\sigma$ only depends on its descents.

\medskip
There is a one-parameter family (the $q$-Eulerians) interpolating between
these three examples \cite{NCSF2,Cha1}, and more recently, another
one-parameter family (the Catalan family) related to mould calculus and random
walks on the line has been introduced \cite{MNT}. This family is given by an
explicit expansion in terms of descent classes. One of the aims of the present
paper is to give its expansion on the so-called Poincar\'e-Birkhoff-Witt
basis, to be defined below.

\subsection{Noncommutative symmetric functions}

The algebra of ordinary symmetric functions $Sym$ can be regarded as the free
associative and commutative algebra over an infinite sequence $(h_n)_{n\ge 1}$
of homogeneous generators ($h_n$ is of degree $n$), so that its linear bases
in degree $n$ are naturally labelled by partitions of $n$
(e.g., products $h_\mu=h_{\mu_1}\cdots h_{\mu_r}$ of complete homogeneous
functions) \cite{Mcd}.

Similarly, the algebra $\Sym$ of noncommutative symmetric functions is the
free associative (but noncommutative) algebra over an infinite sequence
$(S_n)_{n\ge 1}$ of homogeneous generators, endowed with a natural
homomorphism $S_n\mapsto h_n$ (commutative image) \cite{NCSF1}.

Thus, linear bases of the homogeneous component $\Sym_n$ of $\Sym$ are
labelled by compositions of $n$, exactly as those of the descent algebra
$\Sigma_n$ of $\SG_n$. 

Noncommutative symmetric functions can be realized in terms of an auxiliary
(totally ordered) alphabet $A=\{a_1,a_2,\ldots\}$ by setting
\begin{equation}
S_n(A)
 = \sum_{i_1\le i_2\le\ldots\le i_n}a_{i_1}a_{i_2}\cdots a_{i_n}
 = \G_{12\cdots n}(A)\,,
\end{equation}
that is, $S_n$ is the sum of nondecreasing words, or, otherwise said, words
with no descent. Then, obviously,
\begin{equation}
S^I= S_{i_1}S_{i_2}\cdots S_{i_r}
\end{equation}
is the sum of words whose descent set is contained in $\Des(I)$ (the descents
of a word are defined as for permutations as those $i$ such that
$w_i>w_{i+1}$, and are similarly encoded as compositions $C(w)$ of $n$).
Since $S_n(A)=\G_{12\cdots n}(A)$, $\Sym(A)$ is actually a subalgebra of
$\FQSym(A)$.

Introducing the \emph{noncommutative ribbon Schur functions}
\begin{equation}
R_I(A)=\sum_{C(w)=I}w = \sum_{C(\sigma)=I}\G_\sigma(A)
\end{equation}
whose commutative image are indeed the skew Schur functions indexed by
ribbon diagrams, we have
\begin{equation}
S^I =\sum_{J\le I}R_J
\end{equation}
where $J\le I$ is the \emph{reverse refinement order}, which means
that $\Des(J)\subseteq\Des(I)$.

The linear map defined by $\alpha:\, D_I \rightarrow R_I$ appears therefore as
a natural choice for a correspondence $\Sigma_n\rightarrow \Sym_n$.
This choice is actually \emph{canonical}. Indeed, there is a natural way
to introduce an \emph{internal product} $*$ on $\Sym$, by dualizing a natural
coproduct of its graded dual (which is the so-called Hopf algebra of
quasi-symmetric functions).

For this product, $\alpha$ is an \emph{anti-isomorphism}. 
This is convenient, because we want to interpret permutations as endomorphisms
of tensor algebras: if $g_\sigma(w)=w\sigma$, then
$g_\sigma\circ g_\tau=g_{\tau\sigma}$.

The \emph{internal product} $*$ is consistently defined on $\FQSym$ by
\begin{equation}
\G_\sigma * \G_\tau = 
\begin{cases} 
\G_{\tau\sigma} \ \mbox{\rm if $|\tau|=|\sigma|$} \\
0 \ \mbox{\rm otherwise}
\end{cases}
\end{equation}
where $\tau\sigma$ is the usual product of the symmetric group.

The fundamental property for computing with the internal product in $\Sym$ is
the following \emph{splitting formula}:
\begin{proposition}[\cite{NCSF1}]
\label{mackey}
Let $F_1,F_2,\ldots,F_r,G \in \Sym$. Then, 
\begin{equation}
(F_1F_2\cdots F_r)*G = \mu_r \left[ (F_1\otimes\cdots\otimes F_r)* \Delta^r G
\right]
\end{equation}
where in the right-hand side, $\mu_r$ denotes the $r$-fold ordinary
multiplication and $*$ stands for the operation induced on $\Sym^{\otimes n}$
by $*$.
\end{proposition}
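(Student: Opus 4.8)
The plan is to realize all three operations as operations on graded endomorphisms of the tensor algebra $T(V)$ and to reduce the statement to a single intertwining identity. Recall that $\Sym$ sits inside the convolution algebra $\End_{\gr}(T(V))$ via $G \mapsto g_G$ (extending $g_\sigma$ linearly), that the ordinary product of $\Sym$ is exactly the convolution $F \star G = \mu \circ (F \otimes G) \circ \Delta_0$, where $\Delta_0$ is the coproduct of $T(V)$ determined by $\Delta_0(a) = a \otimes 1 + 1 \otimes a$, and that the internal product $*$ is composition of the associated endomorphisms (in the order fixed by $\G_\sigma * \G_\tau = \G_{\tau\sigma}$). Writing the $r$-fold ordinary product as an iterated convolution, $F_1 \cdots F_r = \mu^{(r)} \circ (F_1 \otimes \cdots \otimes F_r) \circ \Delta_0^{(r)}$, the left-hand side of the proposition becomes $\mu^{(r)} \circ (F_1 \otimes \cdots \otimes F_r) \circ \Delta_0^{(r)} \circ g_G$, so everything hinges on moving $g_G$ to the left through the iterated coproduct $\Delta_0^{(r)}$.

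The key lemma --- and the step I expect to be the main obstacle --- is the intertwining identity
\[ \Delta_0 \circ g_G = \Big(\sum_{(G)} g_{G'} \otimes g_{G''}\Big) \circ \Delta_0, \qquad \Delta G = \sum_{(G)} G' \otimes G'', \]
asserting that $G \mapsto g_G$ carries the coproduct $\Delta$ of $\Sym$ (the one with $\Delta S_n = \sum_{i+j=n} S_i \otimes S_j$) to the coproduct $\Delta_0$ of $T(V)$. I would prove this first on the generators $S_n = \G_{12\cdots n}$, where both sides send $a_1 \cdots a_n$ to $\sum_{I \sqcup J = \{1,\dots,n\}} a_I \otimes a_J$, and then on a general $\G_\gamma$ by comparing $\Delta_0(g_\gamma(w))$, i.e.\ the deconcatenations of the permuted word, with the deconcatenation--standardization coproduct $\Delta \G_\gamma = \sum_i \G_{\std(\gamma_{1\cdots i})} \otimes \G_{\std(\gamma_{i+1 \cdots n})}$. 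Conceptually this is just the statement that $\Delta$ is the ``alphabet-doubling'' coproduct $\G_\gamma(A \oplus B) = \sum \G_{\gamma'}(A)\,\G_{\gamma''}(B)$, so the only real work is the bookkeeping needed to see that splitting a word into a subword supported on a set $I$ and its complement, after permuting by $\gamma$, reorganizes exactly into the standardized prefix/suffix pairs of $\gamma$; matching the standardizations is the fiddly point.

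Granting the lemma, the proof assembles quickly. Applying it $r-1$ times and using coassociativity of both $\Delta_0$ and $\Delta$ gives $\Delta_0^{(r)} \circ g_G = \big(\sum g_{G^{(1)}} \otimes \cdots \otimes g_{G^{(r)}}\big) \circ \Delta_0^{(r)}$, where $\Delta^r G = \sum G^{(1)} \otimes \cdots \otimes G^{(r)}$. Substituting into the expression above and distributing the tensor factors, the left-hand side becomes $\mu^{(r)} \circ \big(\sum (F_1 \circ g_{G^{(1)}}) \otimes \cdots \otimes (F_r \circ g_{G^{(r)}})\big) \circ \Delta_0^{(r)}$. Since composition is the internal product, each factor is $F_j * G^{(j)}$, and folding the outer $\mu^{(r)} \circ (-) \circ \Delta_0^{(r)}$ back into an $r$-fold convolution (i.e.\ ordinary product) yields $\sum (F_1 * G^{(1)}) \cdots (F_r * G^{(r)}) = \mu_r[(F_1 \otimes \cdots \otimes F_r) * \Delta^r G]$, which is the right-hand side. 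As an alternative that avoids iterated coproducts, one may first establish the case $r = 2$ and then induct on $r$, using coassociativity of $\Delta$ to pass from $\Delta$ to $\Delta^r$; the base case is precisely one application of the key lemma.
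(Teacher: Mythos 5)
Your reduction is the right one, and it is essentially the classical argument for this statement (the paper itself gives no proof and quotes it from \cite{NCSF1}): realize the ordinary product as the convolution $\mu^{(r)}\circ(g_{F_1}\otimes\cdots\otimes g_{F_r})\circ\Delta_0^{(r)}$ of graded endomorphisms of $T(V)$, the internal product as composition, and reduce to the intertwining identity $\Delta_0\circ g_G=\bigl(\sum g_{G'}\otimes g_{G''}\bigr)\circ\Delta_0$. The gap is that you propose to prove this identity for every $\G_\gamma$, and it is false there: it holds only for $G\in\Sym$, and this restriction is not a technicality but the entire content of the proposition. Concretely, take $F_1=\G_1$, $F_2=\G_{21}$ and $G=\G_{231}$ (not in $\Sym_3$, since $R_{21}=\G_{132}+\G_{231}$). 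Then $F_1F_2=\G_{132}+\G_{231}+\G_{321}$ and, with the convention $\G_\sigma*\G_\tau=\G_{\tau\sigma}$, the left-hand side is
\begin{equation*}
(\G_{132}+\G_{231}+\G_{321})*\G_{231}=\G_{213}+\G_{312}+\G_{132},
\end{equation*}
whereas the only bidegree-$(1,2)$ term of $\Delta\G_{231}$ is $\G_1\otimes\G_{12}$, so the right-hand side is $\mu\bigl[\G_1\otimes(\G_{21}*\G_{12})\bigr]=\G_1\G_{21}=\G_{132}+\G_{231}+\G_{321}$. The two sides differ, so both the splitting formula and your key lemma fail for a general permutation; the ``fiddly point'' of matching standardizations that you flag cannot be resolved, because $\Delta_0(w\gamma)$ remembers where the extracted letters sit inside the permuted word, while the right-hand side does not.

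The repair is to never leave $\Sym$. Your verification on the generators $S_n$ is correct; from there, extend to the products $S^J=S_{j_1}\cdots S_{j_r}$, either by direct computation ($g_{S^J}(w)=\sum w_{I_1}\cdots w_{I_r}$ over ordered set partitions of the positions into blocks of sizes $j_1,\dots,j_r$, matched against $\Delta S^J=\prod_k\bigl(\sum_{p+q=j_k}S_p\otimes S_q\bigr)$), or, better, by showing once and for all that the set of $G$ satisfying the intertwining identity is closed under the ordinary product: this follows from $g_{FG}=\mu\circ(g_F\otimes g_G)\circ\Delta_0$ together with the facts that $\Delta_0$ is a coassociative, cocommutative algebra morphism and that $\Delta$ is an algebra morphism on $\Sym$. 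Linearity then gives all of $\Sym$, and the rest of your argument (iterating the lemma through $\Delta_0^{(r)}$ and folding each factor back into $F_j*G^{(j)}$) goes through verbatim. Note that the $F_j$ may remain arbitrary; only $G$ needs to lie in the descent algebra.
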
                       

\subsection{Lie idempotents as noncommutative symmetric functions}

The first really interesting question about noncommutative symmetric functions
is perhaps ``what are the noncommutative power sums?''.
Indeed, the answer to this question is far from being unique.

If one starts from the classical expression
\begin{equation}\label{eq:exp}
\sigma_t(X)
 = \sum_{n\ge 0}h_n(X)t^n
 = \exp\left\{ \sum_{k\ge 1} p_k \frac{t^k}{k} \right\}\,,
\end{equation}
one can choose to define noncommutative power sums $\Phi_k$ by the same
formula
\begin{equation}
\sigma_t(A)
 = \sum_{n\ge 0}S_n(A)t^n
 = \exp\left\{ \sum_{k\ge 1} \Phi_k \frac{t^k}{k} \right\}\,,
\end{equation}
but a noncommutative version of the Newton formulas
\begin{equation}
nh_n = h_{n-1}p_1+ h_{n-2}p_2+\cdots+p_n
\end{equation}
which are derived by taking the logarithmic derivative of (\ref{eq:exp})
leads to different noncommutative power-sums $\Psi_k$ inductively defined by
\begin{equation}\label{eq:newtonpsi}
nS_n = S_{n-1}\Psi_1+S_{n-2}\Psi_2+\cdots+\Psi_n \,.
\end{equation}

A bit of computation reveals then that
\begin{equation}\label{eq:pnhook}
\Psi_n = R_n - R_{1,n-1}+ R_{1,1,n-2}- \cdots
= \sum_{k=0}^{n-1}(-1)^k R_{1^k,n-k}\,,
\end{equation}
which is analogous to the classical expression of $p_n$ as the alternating
sum of hook Schur functions. Therefore, in the descent algebra,
$\Psi_n$ corresponds to Dynkin's element, $n\theta_n$ (see \eqref{eq:dyn}).

The $\Phi_n$ can also be expressed on the ribbon basis without much
difficulty, and one finds
\begin{equation}
\Phi_n =
  \sum_{|I|=n}
     \frac{(-1)^{l(I)-1}}{\binom{n-1}{l(I)-1}} R_I,
\end{equation}
so that $\Phi_n$ corresponds to $n\phi_n$ (see \eqref{eq:sol}).

\medskip
The case of Klyachko's idempotent is even more interesting: 
it is related to the so-called $(1-q)$-transform (see \cite{NCSF2}). 

The following result is proved in \cite{NCSF1}:
\begin{theorem} \label{LIEIDEM}
Let $F=\alpha(\pi)$ be an element of $\Sym_n$, where $\pi\in\Sigma_n$.
The following assertions are equivalent:
\begin {enumerate}
\item $\pi$ is a Lie quasi-idempotent;
\item $F$ is a primitive element for $\Delta$;
\item$F$ belongs to the Lie algebra $L(\Psi)$ generated by the $\Psi_n$.
\end{enumerate}
Moreover, $\pi$ is a Lie idempotent iff $F-\frac{1}{n}\Psi_n$ is in
the Lie ideal $[L(\Psi)\, ,\, L(\Psi)]$.
\end{theorem}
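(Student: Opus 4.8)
The plan is to prove the two equivalences by separate arguments and then to read off the idempotency criterion from a single computation of how $\pi$ acts on $L_n(V)$. Throughout I identify $\pi\in\Sigma_n$ with its operator $g_\pi$ and use that the multilinear action $w\mapsto a_1\cdots a_n\cdot(-)$ is faithful on $\K\SG_n$, so that an operator identity evaluated on the standard word $a_1\cdots a_n$ can be read back as an identity in the group algebra. For the equivalence (2)$\Leftrightarrow$(3), note first that the Newton recursion~\eqref{eq:newtonpsi} is triangular and invertible over $\K$, so the $\Psi_n$ are free associative generators of $\Sym$; being primitive for $\Delta$, they exhibit $(\Sym,\Delta)$ as the enveloping algebra $U(L(\Psi))$ of the free Lie algebra they generate. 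The Cartier--Milnor--Moore / Poincar\'e--Birkhoff--Witt theorem in characteristic zero then identifies the primitive elements of this connected graded cocommutative Hopf algebra with its Lie algebra of primitives, namely $L(\Psi)$; this is exactly (2)$\Leftrightarrow$(3).

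The heart is (1)$\Leftrightarrow$(2), and the bridge between the two is the annihilation lemma: if $\ell\in\K\<A\>$ is a Lie polynomial and $G,H$ are homogeneous of positive degrees with $|G|+|H|=n$, then $\ell\cdot(G\star H)=0$. Indeed $G\star H$ acts as the convolution $g_G\star g_H$, so $\ell\cdot(G\star H)=\mu\,(g_G\otimes g_H)\,\Delta_{\K\<A\>}(\ell)$; by Friedrich's criterion~\cite{Cartier} a Lie polynomial is primitive for the coproduct of $\K\<A\>$, whence every mixed component of $\Delta_{\K\<A\>}(\ell)$ vanishes and homogeneity of $g_G,g_H$ kills the two surviving terms. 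Assume now $\pi$ is a Lie element, i.e. $\ell:=a_1\cdots a_n\cdot\pi\in L_n(V)$. The lemma together with faithfulness gives $\pi\cdot(G\star H)=0$ for every decomposable $G\star H$; transporting through the anti-isomorphism $\alpha$ and applying the splitting formula (Proposition~\ref{mackey}) turns this into $\mu\bigl[(\bar G\otimes\bar H)*\tdelta(F)\bigr]=0$ for all $\bar G,\bar H\in\Sym_+$. Since multiplication is injective on each bidegree of the free algebra $\Sym$ and the internal product has the unit $S_k$ in each degree, this forces $\tdelta(F)=0$, i.e. $F$ is primitive. The converse reverses the chain: primitivity of $F$ gives $\pi\cdot(G\star H)=0$ for all decomposables via the same formula, hence $\ell\cdot(G\star H)=0$; taking $G,H$ to be identity permutations and using that concatenation $V^{\otimes k}\otimes V^{\otimes(n-k)}\to V^{\otimes n}$ is an isomorphism shows that each mixed component of $\Delta_{\K\<A\>}(\ell)$ vanishes, so $\ell$ is Lie and $\pi$ is a Lie element.

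For the idempotency criterion, write a primitive $F=c\,\Psi_n+P$ with $P\in[L(\Psi),L(\Psi)]_n\subseteq(\Sym_+\Sym_+)_n$. Since $\alpha^{-1}(\Psi_n)=n\theta_n$ acts as the identity on $L_n(V)$ (Dynkin~\cite{Dyn1}) while $\alpha^{-1}(P)$ is a sum of convolution products of positive degree, the annihilation lemma gives $\ell\cdot\pi=cn\,\ell$ for every $\ell\in L_n(V)$: thus $\pi$ acts on $L_n(V)$ as the scalar $cn$. Consequently $\pi$ restricts to the identity of $L_n(V)$ exactly when $cn=1$, and in that case $\pi^2=\pi$ with image $L_n(V)$ (because $\pi$ already maps into $L_n(V)$), so $\pi$ is a Lie idempotent; conversely a Lie idempotent forces $cn=1$. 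As $\Psi_n\notin[L(\Psi),L(\Psi)]$, the equality $c=1/n$ is precisely the condition $F-\tfrac1n\Psi_n\in[L(\Psi),L(\Psi)]$.

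The main obstacle is the middle of (1)$\Leftrightarrow$(2). The notion ``Lie element'' is primitivity for the unshuffle coproduct of $\K\<A\>$ that defines $\star$, whereas assertion~(2) is primitivity for the genuinely different coproduct $\Delta$ of $\Sym$; these two Hopf structures do not share their primitives, so they must be reconciled through the internal product. The delicate bookkeeping is to verify, with the correct handling of the anti-isomorphism $\alpha$, of the right action $g_\sigma\circ g_\tau=g_{\tau\sigma}$, and of the distinction between convolution and composition, that vanishing of all internal products of $F$ against decomposables is equivalent, by Proposition~\ref{mackey} and the nondegeneracy just described, to $\tdelta(F)=0$.
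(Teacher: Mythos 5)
Your proof is correct, but be aware that the paper contains no proof of Theorem~\ref{LIEIDEM}: the statement is quoted from \cite{NCSF1}, so the only comparison available is with that cited argument. Your reconstruction is sound at every step. The equivalence (2)$\Leftrightarrow$(3) via Cartier--Milnor--Moore applied to $\Sym\cong U(L(\Psi))$ is the standard argument; your ``annihilation lemma'' ($\ell\cdot(G\star H)=0$ for $\ell$ a Lie element and $G,H$ homogeneous of positive degrees) is a correct consequence of Friedrichs' criterion and is in substance the dual form of Ree's shuffle criterion that the paper itself invokes later for the convolution Lie algebra $\Lie$; and the passage to $\tdelta(F)=0$ through $\alpha$ and the splitting formula, using $S_k\otimes S_{n-k}$ as internal units together with the injectivity of multiplication on each bidegree of the free algebra $\Sym$, closes both directions correctly. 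Where you genuinely diverge from \cite{NCSF1}: there, Lie quasi-idempotency is first converted, via the Dynkin--Specht--Wever theorem, into the single internal-product identity $\Psi_n * F = nF$, which is then shown by the splitting formula to characterize primitivity, so Dynkin's theorem carries the whole equivalence; in your version it is needed only to identify the scalar $cn$ in the ``Moreover'' part, and the main equivalence rests instead on Friedrichs plus the nondegeneracy of the internal pairing against decomposables --- a slightly more self-contained route that avoids any explicit computation with $\Psi_n$. The one point requiring care, which you flag yourself, is the order of factors when transporting ``$\pi$ followed by $G\star H$'' through the anti-isomorphism $\alpha$; with the convention $g_\sigma\circ g_\tau=g_{\tau\sigma}$ this lands on $(\bar G\bar H)*F$ as required, so there is no gap.
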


Thus, Lie idempotents are essentially the same thing as ``noncommutative power
sums'' (up to a factor $n$), and we shall from now on identify both notions:
a Lie idempotent in $\Sym_n$ is a primitive element whose commutative image
is $p_n/n$.

\medskip
Noncommutative symmetric functions can also be regarded as elements of the
free dendriform algebra on one generator, and Lie idempotents can be expressed
in terms of its preLie structure.
Indeed, there is a Hopf embedding $\iota:\ \Sym\rightarrow \PBT$ of
noncommutative symmetric functions into $\PBT$~\cite{NCSF1,NCSF6,HNT}, which
is given by
\begin{equation}
\iota(S_n)=
(\dots((x\droitdend x)\droitdend x)\dots)\droitdend x\quad\text{($n$ times).}
\end{equation}
For example, the Dynkin elements are
\begin{equation}
\iota(\Psi_n)=
(\dots((x\triangleright x)\triangleright x)
\dots\triangleright x \quad\text{($n$ times),}
\end{equation}
where
\begin{equation}\label{eq:prelie} 
a\triangleright b = a\droitdend b -b\gaudend a 
\end{equation}
is the preLie product defined in any dendriform algebra.

Using the embedding in $\FQSym$, the proof of this identity is remarkably
simple. Indeed,
\begin{equation}
\G_\sigma\droitdend x = \G_{\sigma\cdot(n+1)}\ \text{and}\
x \gaudend \G_\sigma = \G_{(n+1)\cdot \sigma}\,,
\end{equation}
so that $\iota(S_n)=\G_{12\dots n}$.
In terms of permutations, this is therefore the standard embedding of $\NCSF$
into $\FQSym$ as the descent algebra, for which, identifying $\G_\sigma$ with
$\sigma$,
\begin{equation}
\Psi_n = [[\dots,[1,2],\dots,n-1],n].
\end{equation}
It is then clear that
\begin{align}
x\triangleright x
  & = \G_{12}-\G_{21} = R_2-R_{11}=\Psi_2\,\\
\Psi_2\triangleright x
  & = \G_{123}-\G_{213}-\G_{312}+\G_{321} = R_3-R_{12}+R_{111}=\Psi_3\,\\
\Psi_{n-1}\triangleright x
  & = \G_{12\dots n}-\dots\pm \G_{n\dots 21}
    = \sum_{k=0}^{n-1}(-1)^kR_{1^k,n-k}=\Psi_n\,.
\end{align}

\subsection{The Catalan family}

Recently, Theorem \ref{LIEIDEM} has been used to identify a new family of Lie
idempotents of the descent algebras \cite{MNT}. Families of coefficients
obtained from iterated integrals coming from resurgence theory turned out to
be interpretable in terms of noncommutative symmetric functions, thanks to a
highly nontrivial isomorphim with a certain Hopf algebra of \'Ecalle's alien
operators.

Consider the generating series
\begin{equation}
\ca (a, b, t)
 = \frac{1 - (a + b) t - \sqrt{1 - 2 (a + b) t + (b - a)^2 t^2}}{2 a b t}
 = \sum_{n \geq 1} \ca_n (a, b) t^n .
\end{equation}
The coefficients $\ca_n(a,b)$ are homogeneous and symmetric polynomials
in $a$, $b$ of degree $n-1$:
\begin{equation}
\begin{array}{rcc}
\ca_1(a,b)&=&1 \\
\ca_2(a,b)&=&a+b \\
\ca_3(a,b)&=&a^2+3ab+b^2 \\
\ca_4(a,b)&=&a^3+6 a^2b+6ab^2+b^3 \\
\ca_5(a,b)&=&a^4+10a^3b+20a^2b^2+10ab^3+b^4 
\end{array}
\end{equation}
The coefficients of these polynomials are the Narayana numbers
\begin{equation}
T(n,k)=\frac1k \binom{n-1}{k-1} \binom{n}{k-1},
\end{equation}
so that
\begin{equation}
\ca_n(a,b)=\sum_{i=0}^{n-1}T(n,i+1)a^ib^{n-1-i}.
\end{equation}

For any sequence of signs
$\underline{\varepsilon} =(\varepsilon_1,...,\varepsilon_{n})$ ($n\geq 1$),
consider its minimal decomposition into stacks of identical signs 
\begin{equation}
\underline{\varepsilon}= (\varepsilon_1, \dots, \varepsilon_n) =
     (\eta_1)^{n_1} \dots (\eta_s)^{n_s},
\end{equation}
with $\eta_i \not= \eta_{i + 1}$ and $n_1 + \dots + n_s = n$. 

Define the \emph{signed ribbons} as
\begin{equation}
R_{\underline{\varepsilon} \bullet}=(-1)^{l(I)-1}R_I\quad
(R_\emptyset=1,\ R_{\bullet}=R_1),
\end{equation}
where $I$ is the composition such that
\begin{equation}
\Des(I)=\{1\leq i \leq n-1 \ ;\ \varepsilon_i=-\}.
\end{equation}

The following result is proved in \cite{MNT}:
\begin{theorem}
\label{thcat}
For $n\geq 1$, the element of $\Sym_{n+1}$ defined by
\begin{equation}
D_{a,b}^{n+1}=\sum_{
\underline{\varepsilon} 
= (\eta_1)^{n_1} \dots (\eta_s)^{n_s} }
  \left( \prod_{\gf{\eta_i = +}{i < s}} a \right)
  \left( \prod_{\gf{\eta_i = -}{i < s}} b \right)
  {\ca}_{n_1} (a, b) \dots {\ca}_{n_s} (a, b)
   R_{\underline{\varepsilon}\bullet}
\end{equation}
is primitive, and corresponds (up to a scalar factor) to a Lie idempotent of
the descent algebra.
\end{theorem}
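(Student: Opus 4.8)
The plan is to deduce the statement from Theorem~\ref{LIEIDEM} together with the criterion recalled just after it: a Lie idempotent in $\Sym_{n+1}$ is exactly a primitive element whose commutative image is $p_{n+1}/(n+1)$. Hence it suffices to prove that $D^{n+1}_{a,b}$ is primitive and that its commutative image equals $c\,p_{n+1}$ with $c\neq0$; then $\frac1{(n+1)c}D^{n+1}_{a,b}$ is the sought Lie idempotent and $D^{n+1}_{a,b}$ corresponds to it up to the scalar $(n+1)c$. I would dispose of the commutative image first. A primitive element of degree $n+1$ lies in $\KK\Psi_{n+1}\oplus[L(\Psi),L(\Psi)]$, and the commutative image annihilates the commutators, so it is automatically of the form $c\,p_{n+1}$. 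Evaluating the linear functional $[p_{n+1}]\,\overline{R_I}=(-1)^{\ell(I)-1}/(n+1)$ (forced by $\overline{\Psi_{n+1}}=p_{n+1}$ and $\Psi_{n+1}=\sum_k(-1)^kR_{1^k,n+1-k}$), the signs built into the signed ribbons $R_{\underline\varepsilon\bullet}$ cancel, giving $c=\frac1{n+1}\sum_{\underline\varepsilon}w(\underline\varepsilon)$, where $w(\underline\varepsilon)$ is the Catalan weight attached to $\underline\varepsilon$. Since each $\ca_m$ has nonnegative Narayana coefficients, $c$ is a nonzero polynomial in $a,b$, so the scalar is harmless.

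The heart of the matter is primitivity. My main route is a direct computation of the coproduct $\tdelta(D^{n+1}_{a,b})$ in the ribbon basis, using the coproduct of $\Sym$ inherited from $\FQSym$ (equivalently from $\Delta S_n=\sum_{i+j=n}S_i\otimes S_j$). Writing $D^{n+1}_{a,b}=\sum_{\underline\varepsilon}w(\underline\varepsilon)R_{\underline\varepsilon\bullet}$ and collecting the coefficient of each tensor $R_{\underline\alpha\bullet}\otimes R_{\underline\beta\bullet}$ whose two factors both have positive degree, primitivity becomes a family of polynomial identities among the weights $w$. These identities are the Hopf-algebraic shadow of the quadratic functional equation
\[\ca=t+(a+b)t\,\ca+abt\,\ca^2,\]
equivalently the recursion $\ca_{m+1}=(a+b)\ca_m+ab\sum_{j=1}^{m-1}\ca_j\ca_{m-j}$: the two summands record, respectively, the ways a cut sits at the boundary of, or inside, a stack of equal signs, and the $a\leftrightarrow b$ symmetry of $\ca_m$ is what makes the $+$ and $-$ contributions cancel. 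I have checked that this cancellation does occur through degree $n+1=3$, where $D^{3}_{a,b}=(a+b)R_3-aR_{21}-bR_{12}+(a+b)R_{111}$ is killed by $\tdelta$.

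A more structural route, which I would run in parallel and which explains why such weights must exist, uses the splitting of the coproduct of $\FQSym_+$ recalled above. A one-line inspection of the four formulas for $\tdelta(a\nwarrow b)$, $\tdelta(a\searrow b)$, $\tdelta(a\nearrow b)$ and $\tdelta(a\swarrow b)$ shows that, on primitive $a,b$, the operations $\nwarrow$ and $\searrow$ already land in the primitives, while $\nearrow$ and $\swarrow$ do not, their defect being cured in the combination $x\nesw y=x\nearrow y-y\swarrow x$ by the Proposition above. Starting from the primitive generator $x=\G_1=\Psi_1$ and assembling it through these three primitivity-preserving operations, weighted by $a$ and $b$ according to the fixed point dictated by the quadratic equation for $\ca$, produces a primitive element in each degree; the plan is then to identify this element with $D^{n+1}_{a,b}$ by matching ribbon expansions, making primitivity automatic.

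The step I expect to be the real obstacle is the bookkeeping linking the closed-form Catalan weights to either the coproduct cancellation or the recursive construction: one must show that inserting a sign between two sign words, or merging a freshly created block with its neighbours, transforms the block-structured weight exactly according to the recursion for $\ca$. I would organize this with a two-state (transfer-matrix) generating series separating sign words by the sign of their last block, so that the quadratic equation becomes a genuine fixed-point identity to which the sign-reversal symmetry of $\ca$ can be applied blockwise; controlling the last-block corrections, where the connector factors $a$ and $b$ are absent, is the delicate point.
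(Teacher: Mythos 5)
Your reduction is sound: by Theorem~\ref{LIEIDEM} it suffices to prove that $D^{n+1}_{a,b}$ is primitive and that the coefficient of $p_{n+1}$ in its commutative image is nonzero, and your computation of that coefficient (the signs built into the signed ribbons cancelling against the factor $(-1)^{\ell(I)-1}/(n+1)$, leaving a positive sum of Narayana products) is correct. For calibration: the paper does not prove this theorem internally --- it is quoted from \cite{MNT} --- so the only material to compare with is Section~\ref{sec:quadri}, whose functional-equation argument is essentially your second route and does, as a by-product, reprove primitivity.

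The gap is that neither of your routes to primitivity is actually carried out, and the missing step is the entire content of the theorem. In route A you reduce primitivity to ``polynomial identities among the weights'' that you describe as the shadow of $\ca=t+(a+b)t\,\ca+abt\,\ca^2$, but you verify them only in degree $3$ and explicitly defer the bookkeeping. In route B you correctly observe that $\nesw$ preserves primitives, so each homogeneous component of the solution of $X_+=B(\G_1+aX_+,\G_1+bX_+)$ is primitive; but identifying that primitive with the ribbon formula for $D^{n+1}_{a,b}$ is precisely the hard part, and it is what Section~\ref{sec:quadri} spends several pages on: expand $X_n=\sum_\sigma c_\sigma\sigma$, derive the deconcatenation recursion \eqref{eq-rec-csig} for $c_\sigma$, decompose $\sigma$ into maximal monotone runs, and prove $c_\sigma=(-1)^{\des(\sigma)}\prod_k m_k\prod_k N_{i_{k+1}-i_k+1}$, so that $c_\sigma$ depends only on $\Des(\sigma)$ and matches the Catalan weight. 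The two nontrivial cancellations there are $\sum_\ell \mu_{k,\ell}=N_{i_{k+1}-i_k+1}$, which uses $\{m_{k-1},m_k\}=\{a,b\}$ together with the quadratic Narayana recursion, and the telescoping $\sum_k\varepsilon_k=1$ over the runs between the positions of $1$ and $n$. Your ``transfer-matrix over sign words'' is the right kind of device for exactly this, and your worry about the last block (where the connector factor is absent) is well placed --- that is where the $\varepsilon_k$ telescoping enters --- but until that induction, or the equivalent coproduct cancellation, is written down in all degrees, what you have is a correct strategy plus a degree-$3$ check, not a proof.
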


For example, using the correspondence with the usual noncommutative
ribbon Schur functions,
\begin{equation}
\begin{array}{rcl} 
D_{a,b}^2 &=& R_2 -R_{11} \\
D_{a,b}^3 &=& (a+b)R_3 -a R_{21} -b R_{12}+(a+b)R_{111} \\
D_{a,b}^4 &=& (a^2+3ab+b^2) R_4 -a(a+b)R_{31}-
abR_{22}-(a+b)bR_{13} \\
 &&+a(a+b)R_{211}+abR_{121}+(a+b)bR_{112}-(a^2+3ab+b^2)R_{1111}\,.
\end{array}
\end{equation}

\section{The Lie module and its bases}

\subsection{The Lie module}

Lie idempotents belong to a subspace of $\K\SG_n$ called the {\it Lie module},
denoted by $\Lie(n)$. It is the linear span of the complete bracketing of
permutations, regarded as words over the alphabet $\{1,2,\ldots,n\}$.

Of course, these bracketings are not linearly independent, and the dimension
of $\Lie(n)$ is actually $(n-1)!$. The simplest basis of $\Lie(n)$ is the
Dynkin basis
\begin{equation}
D_\sigma = [\cdots [[1,\sigma(2)],\sigma(3)],\cdots,\sigma(n)]
\end{equation}
parametrized by permutations fixing 1, or equivalently
\begin{equation}
D'_\sigma = [\sigma(1),[\sigma(2),\cdots,[\sigma(n-1),n]\cdots]]
\end{equation}
parametrized by permutations fixing $n$. The expansions of various
Lie idempotents in this basis can be found in \cite{NCSF2}. 

\subsection{The Poincar\'e-Birkhoff-Witt basis}

A less obvious basis is the so-called Poincar\'e-Birkhoff-Witt basis
\cite{ST,BS}. Its elements are complete bracketings of permutations,
represented by complete binary trees with labelled leaves, such that for each
internal node, the smallest label is in the left subtree, and the greatest
label is in the right subtree.

Such a labelling of the leaves will be said to be {\it admissible}.
In particular, the leftmost leaf is always 1, and the rightmost one always $n$.

For example, the admissible labellings of all binary trees with three internal
nodes are

\newdimen\vcadre\vcadre=0.15cm 
\newdimen\hcadre\hcadre=0.15cm 
\setlength\unitlength{1.5mm}
\begin{equation*}
\begin{split}
{\xymatrix@C=2mm@R=2mm{
 *{} & *{} & *{} & {\bullet}\ar@{-}[dl]\ar@{-}[dr]  \\
 *{} & *{} & {\bullet}\ar@{-}[dl]\ar@{-}[dr] & *{} & 4 \\
 *{} & {\bullet}\ar@{-}[dl]\ar@{-}[dr] & *{} & 3 \\
 {1} & *{} & {2} 
      }}
\qquad
{\xymatrix@C=2mm@R=2mm{
 *{} & *{} & *{} & {\bullet}\ar@{-}[dl]\ar@{-}[dr]  \\
 *{} & *{} & {\bullet}\ar@{-}[dl]\ar@{-}[dr] & *{} & 4 \\
 *{} & 1   & *{} & {\bullet}\ar@{-}[dl]\ar@{-}[dr] \\
 *{} & *{} & {2} & *{} & 3 
      }}
\qquad
{\xymatrix@C=2mm@R=2mm{
 *{} & *{} & {\bullet}\ar@{-}[dl]\ar@{-}[drr]  \\
 *{} & {\bullet}\ar@{-}[dl]\ar@{-}[dr] & *{}
     & *{} & {\bullet}\ar@{-}[dl]\ar@{-}[dr] \\
  1  & *{} & 2 & 3 & *{} & 4
      }}
\\
{\xymatrix@C=2mm@R=2mm{
 *{} & *{} & {\bullet}\ar@{-}[dl]\ar@{-}[drr]  \\
 *{} & {\bullet}\ar@{-}[dl]\ar@{-}[dr] & *{}
     & *{} & {\bullet}\ar@{-}[dl]\ar@{-}[dr] \\
  1  & *{} & 3 & 2 & *{} & 4
      }}
\qquad
{\xymatrix@C=2mm@R=2mm{
 *{} & *{} & {\bullet}\ar@{-}[dl]\ar@{-}[dr]  \\
 *{} & 1 & *{} & {\bullet}\ar@{-}[dl]\ar@{-}[dr]  \\
 *{} & *{} & {\bullet}\ar@{-}[dl]\ar@{-}[dr] & *{} & 4 \\
 *{} & 2 & *{} & 3
      }}
\qquad
{\xymatrix@C=2mm@R=2mm{
 *{} & *{} & {\bullet}\ar@{-}[dl]\ar@{-}[dr]  \\
 *{} & 1   & *{} & {\bullet}\ar@{-}[dl]\ar@{-}[dr]  \\
 *{} & *{} & 2   & *{} & {\bullet}\ar@{-}[dl]\ar@{-}[dr] \\
 *{} & *{} & *{} & 3 & *{} & 4
      }}
\end{split}
\end{equation*}
The corresponding Lie elements are
\begin{equation}
\begin{split}
[[[1,2],3],4], \quad [[1,[2,3]],4] \quad [[1,2],[3,4]], \\
[[1,3],[2,4]], \quad [1,[[2,3],4]] \quad [1,[2,[3,4]]],
\end{split}
\end{equation}
the smallest example of a tree with two admissible labellings being
 \begin{equation}
[[1,2],[3,4]],\quad [[1,3],[2,4]].
\end{equation}

Note in particular that there is only one admissible labelling of the
left-comb tree and that this element is equal to the Dynkin
element~\eqref{eq:dyn}.

\medskip
It is known~\cite{ST} that the number of such labelled trees is indeed
$(n-1)!$ and that they are linearly independent.
More precisely, one can count the number of admissible labellings of a given
tree.

Recall that the \emph{decreasing tree} $\DT(w)$ of a word without repeated
letters $w=unv$ is the binary tree with root $n=\max(w)$, left subtree
$\DT(u)$ and right subtree $\DT(v)$. 

The number of permutations of $\SG_n$ with a given decreasing tree $t$ is
given by the \emph{hook-length formula}
\begin{equation}
N(t) = n!\prod_{v\in t}\frac1{h(v)}
\end{equation}   
where the hook-length $h(v)$ of a node $v$ is the number of nodes of its
subtree.

One can show (see Appendix) that the admissible labellings of a complete
binary tree $T$ are in bijection with the permutations whose decreasing tree
has shape $t$, the binary tree consisting of the internal nodes of $T$. 

\subsection{The convolution Lie algebra and its Catalan subalgebra}

The following easy proposition appears to have remained unnoticed.

\begin{proposition}
\noindent (i) The direct sum
\begin{equation}
\Lie = \bigoplus_{n\ge 1}\Lie(n)
\end{equation}
is a Lie algebra for the convolution bracket 
\begin{equation}
[\alpha,\beta]_\star = \alpha\star\beta - \beta\star\alpha
\end{equation}
of $\K\SG$.

(ii) Representing its elements as complete binary trees with labelled leaves
$T(\sigma)$, one has
\begin{equation}
[T_1(\alpha),T_2(\beta)]_\star
 = \sum_{\gamma=\gamma_1.\gamma_2\in\alpha*\beta}[T_1(\gamma_1),T_2(\gamma_2)]
\end{equation}
where $\gamma_1$ is the prefix of $\gamma$ of size equal to the size of
$\alpha$.
Note that in the r.h.s., the bracket is taken w.r.t. concatenation.

(iii) $\Lie$ is (strictly) contained in the primitive Lie algebra of $\K\SG$.
\end{proposition}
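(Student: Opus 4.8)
The plan is to recognise $[\cdot,\cdot]_\star$ as an ordinary commutator. Comparing \eqref{eq:star} with the multiplication rule of $\FQSym$ shows that the convolution $\star$ \emph{is} the associative product of $\FQSym\cong\K\SG$; hence $[\alpha,\beta]_\star=\alpha\star\beta-\beta\star\alpha$ is automatically antisymmetric and satisfies the Jacobi identity on all of $\K\SG$. So (i) is not a question of these identities but only of \emph{closure}, namely $[\Lie(k),\Lie(l)]_\star\subseteq\Lie(k+l)$, and this I would read off directly from (ii). I would therefore prove (ii) first, deduce (i), and treat (iii) at the end.

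For (ii) I would first establish the un-antisymmetrised identity
\[
T_1(\alpha)\star T_2(\beta)=\sum_{\gamma=\gamma_1\gamma_2\in\alpha\star\beta}T_1(\gamma_1)\,T_2(\gamma_2),
\]
where the products on the right are concatenation of Lie polynomials. Writing the PBW elements in the permutation basis, $T_1(\alpha)=\sum_\sigma c_\sigma\,\sigma$ and $T_2(\beta)=\sum_\tau d_\tau\,\tau$, the left-hand side equals $\sum_{\sigma,\tau}c_\sigma d_\tau\,(\sigma\star\tau)$. On the right, $\gamma_1$ is the prefix of size $|\alpha|$ with $\std(\gamma_1)=\alpha$; its value set $V_1$ is order-isomorphic to $\{1,\dots,|\alpha|\}$, so $T_1(\gamma_1)$ carries exactly the coefficients $c_\sigma$ of $T_1(\alpha)$ (read on the words spelling $V_1$ in each pattern $\sigma$), and likewise $T_2(\gamma_2)$ carries the $d_\tau$ on the complementary set $V_2$. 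Summing over $\gamma\in\alpha\star\beta$ is the same as summing over all $|\alpha|$-subsets $V_1$ assigned to the prefix, which is precisely the rule defining $\sigma\star\tau$; the two sides therefore agree coefficientwise. The stated bracket formula then follows by antisymmetrising: the block swap $\gamma=\gamma_1\gamma_2\mapsto\gamma_2\gamma_1$ is a bijection $\alpha\star\beta\to\beta\star\alpha$ sending $T_1(\gamma_1)T_2(\gamma_2)$ to the corresponding term $T_2(\gamma_2)T_1(\gamma_1)$ of $T_2(\beta)\star T_1(\alpha)$, whence $[T_1(\alpha),T_2(\beta)]_\star=\sum_\gamma\bigl(T_1(\gamma_1)T_2(\gamma_2)-T_2(\gamma_2)T_1(\gamma_1)\bigr)=\sum_\gamma[T_1(\gamma_1),T_2(\gamma_2)]$.

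Part (i) is then immediate. For each $\gamma$ the concatenation bracket $[T_1(\gamma_1),T_2(\gamma_2)]$ is the complete bracketing whose tree has $T_1(\gamma_1)$ as left and $T_2(\gamma_2)$ as right subtree of a new root; being a complete bracketing of $\{1,\dots,n\}$ it lies in $\Lie(n)$. Hence the whole sum lies in $\Lie(k+l)$, which is the closure needed, and $\Lie$ is a Lie subalgebra of $(\K\SG,\star)$.

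For (iii), each generator of $\Lie(n)$ is a complete bracketing, hence a Lie polynomial, and such elements are primitive for $\tdelta$ (which is induced by the ordinal substitution $A\mapsto A\oplus B$, under which Lie polynomials are primitive), so $\Lie$ sits inside the Lie algebra of primitive elements of $\K\SG$. Strictness I would get from dimensions: $\dim\Lie(n)=(n-1)!$, while the primitives of $\FQSym_n$ are counted by the indecomposable permutations ($1,1,3,13,\dots$); already at $n=3$ one has $2<3$, and solving $\tdelta(x)=0$ in $\FQSym_3$ directly exhibits a $3$-dimensional primitive space, one dimension more than $\Lie(3)$. The only genuine work is the coefficient bookkeeping in (ii): the order-isomorphism invariance of the bracketing coefficients, and the identification of the sum over $\gamma$ with a sum over prefix value sets, are the points to handle carefully, the rest being formal.
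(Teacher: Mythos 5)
Your proof is correct and follows essentially the same route as the paper: parts (i) and (ii) rest on the block-swap bijection $\alpha\star\beta\to\beta\star\alpha$ giving $[\alpha,\beta]_\star=\sum_{\gamma=\gamma_1.\gamma_2}(\gamma_1\cdot\gamma_2-\gamma_2\cdot\gamma_1)$ (you merely spell out the order-isomorphism bookkeeping that the paper leaves implicit), and your appeal to primitivity of Lie polynomials for the unshuffling coproduct is just the dual formulation of the Ree criterion the paper invokes for (iii). The one genuine addition is your dimension count $\dim\Lie(3)=2<3=\dim\mathrm{Prim}(\K\SG_3)$, which actually justifies the strictness claim that the paper asserts without argument.
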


For example
\begin{equation}
\begin{split}
[[1,2],1]_\star
  &= [1,2]\star 1-1\star[1,2] \\
  &= [1,2].3 + [1,3].2 + [2,3].1 - 1.[2,3] - 2.[1,3] - 3.[1,2] \\
  &=  [[1,2],3]+[[1,3],2]+[[2,3],1]. \\
\end{split}
\end{equation}

\Proof 
Points (i) and (ii) are consequences of the following identity.
Let $\alpha$ and $\beta$ be two permutations.
Then,
\begin{equation}
[\alpha,\beta]_\star = \alpha\star\beta - \beta\star\alpha =
\sum_{\gamma=\gamma_1.\gamma_2\in\alpha*\beta}
    ({\gamma_1\cdot\gamma_2} - {\gamma_2\cdot\gamma_1}),
\end{equation}
which is immediate from the expression of the convolution product
\eqref{eq:star}.

Point (iii) is a consequence of Ree's criterion (cf. \cite{Re}): in the free
associative algebra, the free Lie algebra is the orthogonal of the space of
proper shuffles. But in $\K\SG$, an element is primitive iff it is othogonal
to special shuffles $u\shuffle v$, where $u$ and $v$ are permutations of
consecutive intervals $[1,k]$ and $[k+1,n]$.
\qed

It is known that the primitive Lie algebra of $\K\SG$ is free \cite{Foi1}.
Therefore, $\Lie$, the convolution Lie algebra, which is a Lie subalgebra of
the previous one, is free as well (see Section 2.2 of~\cite{Re}).

Moreover, the PBW basis of $\Lie$ allows one to define an interesting Lie
subalgebra implicitly defined in~\cite{BS}:

\begin{theorem}[\cite{BS}]
\label{th:Cat}
For a complete binary tree $T$, let
\begin{equation}
\c_T = \sum_{\sigma\ {\rm admissible}}T(\sigma).
\end{equation}
Then, the $\c_T$ span a Lie subalgebra $\CC$ of $\Lie$.
\end{theorem}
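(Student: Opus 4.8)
The plan is to start from the explicit bracket formula of the preceding Proposition and reduce the statement to a purely combinatorial fact about sums over admissible labellings. Write $S=T_1\wedge T_2$ for the complete binary tree with left subtree $T_1$ and right subtree $T_2$, and note that the concatenation bracket $[T_1(\gamma_1),T_2(\gamma_2)]$ is nothing but the single grafted tree $S(\gamma_1\gamma_2)$. Substituting the definition $\c_{T_i}=\sum_{\sigma\ \mathrm{adm}}T_i(\sigma)$ into the formula then yields
$$[\c_{T_1},\c_{T_2}]_\star=\sum_{\gamma}S(\gamma),$$
the sum being over all labellings $\gamma$ of $S$ by $\{1,\dots,n\}$ whose first $|T_1|$ letters standardize to an admissible labelling of $T_1$ and whose last $|T_2|$ letters standardize to an admissible labelling of $T_2$. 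Grouping the $\gamma$ by the value set $X$ of their first $|T_1|$ letters, this is $\sum_{X}\c_{T_1}^{X}\wedge\c_{T_2}^{Y}$, where $Y=\{1,\dots,n\}\setminus X$ and $\c_T^X$ is the admissible sum built on the value set $X$ (a relabelled copy of $\c_T$, by relative-order invariance), and where $\wedge$ denotes grafting the first factor as left subtree and the second as right subtree.

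First I would isolate the part of this sum that is already in PBW form. The recursive description of admissibility gives $\c_S=\sum_{X}\c_{T_1}^{X}\wedge\c_{T_2}^{Y}$ taken only over those $X$ with $1\in X$ and $n\in Y$ (the global minimum in the left subtree and the global maximum in the right subtree, which is exactly root-admissibility). Hence the bracket equals $\c_S$ plus a correction that sums $\c_{T_1}^{X}\wedge\c_{T_2}^{Y}$ over the partitions violating the root condition, namely those placing the maximum in the left subtree or the minimum in the right subtree. These correction terms are Lie elements $S(\gamma)$ with non-admissible $\gamma$, so they are not yet members of the PBW basis and must be straightened.

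The core of the proof is then a straightening lemma, which I would prove by induction on the number of leaves: the correction sum re-expands, through repeated use of the Jacobi identity (equivalently, tree rotations at the root), into a linear combination of the $\c_{T'}$ for the trees $T'$ obtained from $S$ by rotation. The quantity to be maintained through the induction is \emph{uniformity}: in the PBW expansion of the whole bracket the coefficient of a basis element $T'(\sigma')$ must depend only on the shape $T'$ and not on its admissible labelling $\sigma'$, which is precisely the assertion that the result lies in $\CC$. This should follow from admissibility being a relative-order condition imposed independently at each internal node, so that the Jacobi straightening at a node mixes only labellings sharing the surrounding relative order, and the operation ``sum over all admissible labellings of the two subtrees'' is preserved under rotation. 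The smallest nontrivial case, $[\c_{T_1},\c_{T_2}]_\star$ with $T_1$ the two-leaf tree and $T_2$ the one-leaf tree, equals $2\c_{T}-2\c_{T'}$ where $T,T'$ are the two complete binary trees with three leaves; it already displays both a rotated shape and shape-only coefficients.

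The main obstacle I anticipate is exactly the bookkeeping in this straightening step: controlling how a root-condition-violating labelling, after rotation, redistributes among the admissible labellings of the rotated trees, and proving that the contributions aggregate with coefficients independent of the final labelling. Conceptually this is where the preLie algebra on naked binary trees enters, the bracket being the antisymmetrization of a grafting (preLie) product on the $\c_T$; I would use that preLie description to package the rotations uniformly, reducing the global uniformity claim to a single local identity at the root and thereby closing the induction.
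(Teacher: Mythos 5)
Your setup is correct and is the right first move: by the bracket formula for the convolution Lie algebra, $[\c_{T_1},\c_{T_2}]_\star$ is indeed the sum of all grafted elements $S(\gamma)$, $S=T_1\wedge T_2$, over labellings $\gamma$ whose two halves standardize to admissible labellings of the factors; the root-admissible part of that sum is exactly $\c_S$; and your $n=3$ example $[\c_{\bullet\wedge\bullet},\c_{\bullet}]_\star=2\c_T-2\c_{T'}$ is right. But the argument stops exactly where the theorem begins. Everything rests on your ``straightening lemma'': that the non-root-admissible terms, once rewritten in the PBW basis by Jacobi moves, aggregate with coefficients depending only on the shape of the resulting trees. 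You assert this rather than prove it, and the justification offered (admissibility is a relative-order condition imposed node by node, so the straightening ``mixes only labellings sharing the surrounding relative order'') is not an argument: a single Jacobi move at the root produces bracketings that may violate admissibility at the new root or at deeper nodes, so the rewriting cascades, and controlling which admissible $T'(\sigma')$ a given non-admissible $S(\gamma)$ feeds into, and with what multiplicity, is precisely the combinatorial content of the theorem. Your closing remark --- that you would ``use the preLie description on naked trees to package the rotations'' --- is essentially circular, since the existence of a preLie (hence Lie) structure on the span of the $\c_T$ is what is being proved.

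The paper takes a route that avoids the straightening altogether. The key point (Proposition~\ref{prop:swne}) is that $\c_T$ is the evaluation of the naked tree $T$ under the single bilinear, primitive-preserving operation $x\nesw y=x\nearrow y-y\swarrow x$ coming from the quadri-algebra structure of $\FQSym$. The preLie product $\droit$ (whose antisymmetrization is the bracket) of two such evaluations is then computed by splitting it into the eight quarter-products, regrouping them into the antisymmetrized combinations $S_{E-W}$, $S_{W-E}$, $N_{W-E}$, $N_{E-W}$, and tracking only the positions of the minimal and maximal letters; this yields the closed recursion~\eqref{eq-prelie-nus} expressing $T_1\droit(T_2\wedge T_3)$ directly as a signed sum of grafted naked trees, so closure of $\CC$ is automatic and no non-admissible bracketing ever appears. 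To salvage your route you would have to prove, by hand, a shape-uniform version of the PBW rewriting --- in effect an independent re-derivation of~\eqref{eq-prelie-nus} --- and that missing lemma is where all the work lies.
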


Our proof of this fact relies upon the quadri-algebra structure,
introduced in Section~\ref{sec:quadri}.
Actually, $\CC$ is also a preLie algebra as already observed
in~\cite{BS}.

Let $T$ and $T'$ be two binary trees.
We shall denote by $T\wedge T'$ the tree whose left (resp. right) subtree is
$T$ (resp. $T'$).
This grafting operation is understood as bilinear.
Let $\triangleright$ be the preLie product as in Eq. \eqref{eq:prelie}.

\begin{proposition}
\label{prop:prel}
$\CC$ is stable for the preLie product $\triangleright$.

Writing $T$ for $\c_T$, the preLie product satifies the recursion 
\begin{equation}
\begin{split}
T_1 \droit (T_2\wedge T_3) =&
    (T_1 \wedge (T_2\wedge T_3)) 
  + ((T_1\droit T_2) \wedge T_3)
  + (T_2\wedge (T_1\droit T_3)) \\
  &- ((T_1\wedge T_2) \wedge T_3)
  - ((T_2\wedge T_1) \wedge T_3),
\end{split}
\end{equation}
or, pictorially,
\begin{equation}
\label{eq-prelie-nus}
\begin{split}
T_1 \droit \arbgx{T_2}{T_3} =&
\arbgxb{T_1}{T_2}{T_3} +
\arbgx{T_1\droit T_2}{T_3} +
\arbgx{T_2}{T_1\droit T_3} \\
&- \arbgxc{T_1}{T_2}{T_3}
 - \arbgxc{T_2}{T_1}{T_3}\,.
\end{split}
\end{equation}
\end{proposition}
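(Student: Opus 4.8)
The plan is to establish the recursion~\eqref{eq-prelie-nus} directly from the definitions, and then observe that it proves stability as a corollary. The preLie product is $T_1 \droit T_2 = T_1 \droitdend T_2 - T_2 \gaudend T_1$, so everything reduces to understanding the two half-products $\droitdend$ and $\gaudend$ acting on the $\c_T$. Recall from Section~2 that these half-products split the concatenation of two words according to the position of the \emph{greatest} letter. The key observation is that $\c_T$ is the sum over all admissible labellings of a fixed shape $T$, and grafting $T_2 \wedge T_3$ corresponds to placing $T_2$ as the left subtree and $T_3$ as the right subtree of a new root. First I would fix shapes and unwind what $T_1 \droitdend (T_2\wedge T_3)$ and $(T_2\wedge T_3)\gaudend T_1$ mean combinatorially in terms of where the overall maximal letter can land.

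The heart of the argument is a letter-chasing analysis. After convolution, the letters of $T_1$ get shifted and merged with those of $T_2\wedge T_3$; admissibility at the new root of $T_2\wedge T_3$ requires the global minimum to sit in the left subtree and the global maximum in the right subtree. When we compute $T_1\droitdend(T_2\wedge T_3)$, the overall maximum $n$ comes from the right factor, hence lands inside the $T_2\wedge T_3$ part; I would track whether it falls into the $T_2$ block or the $T_3$ block, and likewise track where the letters of $T_1$ are inserted relative to the root. This case analysis naturally produces the five terms on the right-hand side: the two ``positive'' grafted terms $(T_1\droit T_2)\wedge T_3$ and $T_2\wedge(T_1\droit T_3)$ arise when $T_1$ merges into one of the two subtrees, the term $T_1\wedge(T_2\wedge T_3)$ arises when $T_1$ becomes a new left subtree below the root, and the two subtracted terms $(T_1\wedge T_2)\wedge T_3$ and $(T_2\wedge T_1)\wedge T_3$ are the correction terms coming from the difference $\droitdend$ minus $\gaudend$, where the inner preLie products $T_1\droit T_2$ have been expanded and the concatenation (rather than preLie) pieces must be cancelled.

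The cleanest route is to leverage the quadri-algebra structure announced for Section~\ref{sec:quadri}: the four partial products $\nwarrow,\swarrow,\searrow,\nearrow$ refine $\droitdend$ and $\gaudend$ according to the joint position of the \emph{smallest} and \emph{largest} letters, which is exactly the data that governs which subtree receives the min and the max. I would rewrite $T_1\droit(T_2\wedge T_3)$ using the quadri relations so that each partial product corresponds to a definite geometric placement (min/max into left or right subtree), then reassemble. The grafting $\wedge$ is precisely the operation that records ``put this in the left subtree, that in the right subtree and add a root,'' so each quadri-term maps to one grafted tree, and the quadri axioms collapse the bookkeeping into~\eqref{eq-prelie-nus}.

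The main obstacle will be the correct handling of the two subtracted terms and verifying that the sign and multiplicity work out so that nothing is double-counted. The subtraction arises because $T_1\droit T_2 = T_1\droitdend T_2 - T_2\gaudend T_1$ already contains a concatenation piece that, when grafted under a root with $T_3$, overcounts the configurations where $T_1$ sits adjacent to $T_2$ without genuinely bracketing into it; the two orders $(T_1\wedge T_2)$ and $(T_2\wedge T_1)$ both appear because the admissibility constraint at the root only fixes the min/max globally, not the relative order of the $T_1$ and $T_2$ blocks. I expect the delicate step to be confirming that every admissible labelling on each side is counted with the same coefficient once the shifts induced by convolution are accounted for; this is where a careful size-and-standardization argument is needed, using that $\gamma_1$ is always the prefix of size $|\alpha|$ as in Proposition statement~(ii). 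Once the recursion is proved, stability of $\CC$ under $\droit$ is immediate, since every term on the right-hand side is again of the form $\c_{T'}$ for some tree $T'$, with the inner products $T_1\droit T_2$ and $T_1\droit T_3$ handled by induction on the number of internal nodes.
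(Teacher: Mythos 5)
Your proposal follows essentially the same route as the paper's proof: it refines the half-products $\droitdend$ and $\gaudend$ through the quadri-algebra structure of $\FQSym$, tracks the joint positions of the smallest and largest letters (exactly the admissibility data at the new root), identifies the appropriate signed combinations of partial products with graftings $\wedge$, and accounts for the two subtracted terms as the corrections separating the grafting from the genuine preLie pieces. This is precisely how the paper argues (grouping the eight terms of the bracket into the signed pairs $S_{E-W}$, $S_{W-E}$, $N_{W-E}$, $N_{E-W}$, identifying $S_{E-W}(T_1,T_2)$ with $T_1\wedge T_2$, and deriving a recursion for $N_{W-E}$ on a grafted tree), so the approach is the same and correct.
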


\Proof
Let us define a new law $\droit'$ as
$T\droit' T'=T\droit T' - T \wedge T'$.
Then the statement rewrites as
\begin{equation}
\label{eq-prelie-nusb}
\begin{split}
T_1 \droit' \arbgx{T_2}{T_3} =&
\arbgx{T_1\droit' T_2}{T_3} +
\arbgx{T_2}{T_1\droit' T_3}\\
& + \arbgxb{T_2}{T_1}{T_3} 
  - \arbgxc{T_2}{T_1}{T_3}\,.
\end{split}
\end{equation}

We have seen that the product of $\FQSym$ can be split into the sum of the
four quadri-algebra products, so that the Lie bracket consists of eight
different terms. Let us group the terms as follows:

\begin{equation}
\begin{split}
S_{E-W}(T_1,T_2) = T_1 \searrow T_2 - T_2\swarrow T_1 \\
S_{W-E}(T_1,T_2) = T_1 \swarrow T_2 - T_2\searrow T_1 \\
N_{W-E}(T_1,T_2) = T_1 \nwarrow T_2 - T_2\nearrow T_1 \\
N_{E-W}(T_1,T_2) = T_1 \nearrow T_2 - T_2\nwarrow T_1 \\
\end{split}
\end{equation}

Then, 
\begin{equation}
S_{E-W}(T_1,T_2) = \arbgx{T_1}{T_2},\ 
S_{W-E}(T_1,T_2) = -\arbgx{T_2}{T_1},
\end{equation}

\begin{equation}
\begin{split}
N_{W-E}\left(\arbgx{T_1}{T_2},T_3\right)
&=
   \arbgx{S_{E-W}(T_1,T_3)}{T_2}
 + \arbgx{N_{W-E}(T_1,T_3)}{T_2} \\
&+ \arbgx{T_1}{S_{W-E}(T_2,T_3)}
   \arbgx{T_1}{N_{W-E}(T_2,T_3)},
\end{split}
\end{equation}
and
\begin{equation}
N_{E-W}(T_1,T_2) = - N_{W-E}(T_2,T_1).
\end{equation}

All these formulas are easily obtained by following carefully what happens to
the smallest and largest values in the different products.

It follows that
\begin{equation}
T_1 \droit T_2
 = T_1 \succ T_2 - T_2 \prec T_1 = S_{E-W}(T_1,T_2) + N_{E-W}(T_1,T_2),
\end{equation}
whence the formula for the pre-Lie product of naked trees
\eqref{eq-prelie-nus}.
\qed

In \cite{BS}, a pre-Lie algebra structure is defined on the linear span of
(abstract) complete binary trees, by means of a combinatorial formula
defined in terms of graftings. We are now in a position to see that
our pre-Lie structure on $\CC$ coincides with that of \cite{BS}.

\begin{proposition}
The Catalan preLie algebra $\CC$ is isomorphic to the preLie algebra
${\mathcal T}_{\rm pb}$ defined in \cite[Sec. 3.2]{BS}.
\end{proposition}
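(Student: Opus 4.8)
The plan is to write down the obvious candidate for the isomorphism and then check that it carries one preLie product to the other. For each shape $T$, the element $\c_T=\sum_{\sigma\ {\rm admissible}}T(\sigma)$ is a sum of distinct PBW basis elements, and two distinct shapes involve disjoint sets of such basis elements; hence the family $(\c_T)_T$ is linearly independent. The linear map $\phi\colon {\mathcal T}_{\rm pb}\to\CC$ sending an abstract complete binary tree $T$ to $\c_T$ is therefore a bijection of the underlying spaces, and everything reduces to proving that $\phi$ intertwines the grafting product of \cite{BS} with the convolution preLie product $\triangleright$ of $\CC$.

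The key point is that Proposition~\ref{prop:prel} already determines $\triangleright$ uniquely. The recursion \eqref{eq-prelie-nus} expresses $T_1\triangleright(T_2\wedge T_3)$ in terms of the products $T_1\triangleright T_2$ and $T_1\triangleright T_3$, whose second arguments have strictly fewer internal nodes, together with graftings $\wedge$. Since every complete binary tree with at least one internal node is of the form $T_2\wedge T_3$, this is a well-founded induction on the number of internal nodes of the second factor, whose only base case is $T_1\triangleright x$, where $x$ is the single-leaf tree with $\c_x=\G_1$. Using $a\triangleright b=a\droitdend b-b\gaudend a$ together with $\G_\sigma\droitdend x=\G_{\sigma\cdot(n+1)}$ and $x\gaudend\G_\sigma=\G_{(n+1)\cdot\sigma}$, one checks directly that $\c_{T_1}\triangleright x=\c_{T_1\wedge x}$: the admissible labellings of $T_1\wedge x$ are exactly those of $T_1$ with the extra right leaf forced to carry the maximal label, so that $\c_{T_1\wedge x}=[\c_{T_1},n+1]$, which is precisely $\c_{T_1}\triangleright x$. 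This extends the chain $\Psi_{n-1}\triangleright x=\Psi_n$ for the left combs. Thus the convolution preLie product is the \emph{unique} bilinear product on the span of complete binary trees satisfying the recursion \eqref{eq-prelie-nus} and the base case $T\triangleright x=T\wedge x$.

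It therefore suffices to verify that, after transport by $\phi$, the grafting product of \cite[Sec.~3.2]{BS} obeys the same recursion and the same base case. I would do this by unfolding their definition, which expresses the product as a signed sum over the ways of grafting $T_1$ into $T_2\wedge T_3$, and then sorting these graftings according to whether the attachment takes place at the root (yielding the two leading $\wedge$-terms and the two subtracted terms of \eqref{eq-prelie-nus}), inside the left factor $T_2$ (yielding $(T_1\triangleright T_2)\wedge T_3$), or inside the right factor $T_3$ (yielding $T_2\wedge(T_1\triangleright T_3)$); the base case $T\triangleright x=T\wedge x$ should be immediate from their formula. The main obstacle is exactly this bookkeeping, namely matching the signs and multiplicities of the graftings of \cite{BS} against the five terms of \eqref{eq-prelie-nus}. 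Once both products are seen to satisfy the identical well-founded recursion with the identical base case, they coincide, and $\phi$ is an isomorphism of preLie algebras.
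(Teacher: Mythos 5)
Your overall strategy is the paper's own: identify $\c_T$ with the abstract tree $T$, observe that Proposition~\ref{prop:prel} pins down the product $\triangleright$ on $\CC$ by a recursion on the second argument, and then check that the grafting product of \cite{BS} satisfies the same recursion. The scaffolding you add is correct and in fact tightens the paper's two-sentence argument: the $\c_T$ are sums over disjoint subsets of the PBW basis, hence linearly independent, so your map is a linear bijection; the recursion \eqref{eq-prelie-nus} together with the base case $T\triangleright x=T\wedge x$ determines $\triangleright$ uniquely (the paper leaves both the uniqueness and the base case implicit); and your computation $\c_{T_1}\triangleright\G_1=[\c_{T_1},n+1]=\c_{T_1\wedge x}$ is right.

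The decisive step, however --- matching the signed grafting sum of \cite[Sec.~3.2]{BS} against the five terms of \eqref{eq-prelie-nus} --- is only announced (``I would do this by unfolding their definition''), and that matching is the entire content of the paper's proof. It does go through along the lines you sketch, but your preview of the bookkeeping is slightly off: only the first and last terms of \eqref{eq-prelie-nus} come purely from insertions at the root, namely on the invisible right edge above the root (giving $+T_1\wedge(T_2\wedge T_3)$) and on the left root edge (giving $-(T_2\wedge T_1)\wedge T_3$). The term $-(T_1\wedge T_2)\wedge T_3$ is not a root insertion at all; it is the correction converting $(T_1\droit T_2)\wedge T_3$ into $(T_1\droit' T_2)\wedge T_3$, i.e., removing the spurious insertion on $T_2$'s own invisible root edge, and the genuine insertion on the right root edge is likewise absorbed into $T_2\wedge(T_1\droit T_3)$. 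The paper sidesteps this interleaving by passing to $\droit'=\droit-\wedge$, for which the four-term recursion \eqref{eq-prelie-nusb} decomposes cleanly as (insert on the two root edges) plus (recurse into the two subtrees). Until that term-by-term identification is actually written out, what you have is a correct and well-organized plan rather than a proof.
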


\Proof
In the setting of \cite{BS}, computing $T\droit' T'$ amounts to taking the sum of all
(naked) trees obtained by creating a new internal node $n$ in the middle of
any right branch and gluing $T$ as left subtree of $n$, the right subtree of
$n$ being what was below $n$ in the beginning, minus the similar sum where one
creates a new internal node in the middle of any left branch and gluing $T$ as
right subtree.
Indeed, Formula~\eqref{eq-prelie-nusb} consists in the creation of two nodes
on the branches connected to the root (last two terms) and in the induction on
the left and right subtrees (first two terms).
\qed

\medskip 
Recall that the primitive Lie algebra of $\NCSF$ is freely generated by the
$\Psi_n$.
Since $\Psi_n = (\cdots(\G_1\triangleright \G_1)\triangleright \cdots)
                 \triangleright \G_1\in{\CC}_n$, we have:

\begin{proposition}
The primitive Lie algebra of $\Sym$ is contained in the Catalan Lie algebra
${\CC}$.
\end{proposition}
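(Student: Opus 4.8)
The plan is to treat this statement as a direct corollary of the structures already assembled, via the elementary fact that any Lie subalgebra containing a generating set must contain the whole Lie subalgebra generated by that set.

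First I would invoke Theorem~\ref{LIEIDEM}: the primitive elements of $\Sym$ are exactly the members of the Lie algebra $L(\Psi)$ generated by the noncommutative power sums $\Psi_n$, and, as recalled just above, this primitive Lie algebra is freely generated by the $\Psi_n$. Here the Lie bracket is the commutator of the product of $\Sym$; under the embedding $\Sym\hookrightarrow\FQSym$ this commutator is nothing but the convolution bracket $[\,\cdot\,,\,\cdot\,]_\star$, because the associative product of $\FQSym$ coincides with the convolution product $\star$. Thus $L(\Psi)$ is precisely the Lie subalgebra of $(\Lie,[\,\cdot\,,\,\cdot\,]_\star)$ generated by the $\Psi_n$.

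Next I would verify that each generator lies in $\CC$. The degree-one element $\G_1$ is the Catalan element $\c_T$ of the one-leaf tree, so $\G_1\in\CC$; since Proposition~\ref{prop:prel} shows $\CC$ to be stable under the preLie product $\triangleright$, the iterated product $(\cdots(\G_1\triangleright\G_1)\triangleright\cdots)\triangleright\G_1$ (with $n$ factors) remains in $\CC$, and by the identity displayed immediately before the statement this equals $\Psi_n$. Hence $\Psi_n\in\CC_n$ for every $n$.

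Finally, Theorem~\ref{th:Cat} tells us that $\CC$ is a Lie subalgebra of $\Lie$ for the same bracket $[\,\cdot\,,\,\cdot\,]_\star$. Being closed under this bracket and containing all the generators $\Psi_n$, the space $\CC$ must contain the Lie subalgebra they generate, namely $L(\Psi)$; that is, $L(\Psi)\subseteq\CC$, which is the assertion. I expect no genuine obstacle here: the only point meriting attention is the matching of the two a priori distinct brackets---the Hopf commutator of $\Sym$ and the convolution bracket defining $\Lie$ and $\CC$---and these agree precisely because the product of $\FQSym$ is the convolution product $\star$.
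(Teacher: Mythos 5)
Your proof is correct and follows essentially the same route as the paper, which derives the proposition immediately from the facts that the primitive Lie algebra of $\Sym$ is generated by the $\Psi_n$, that $\Psi_n=(\cdots(\G_1\triangleright\G_1)\triangleright\cdots)\triangleright\G_1$ lies in $\CC_n$ by the preLie stability of Proposition~\ref{prop:prel}, and that $\CC$ is a Lie subalgebra by Theorem~\ref{th:Cat}. Your additional check that the Hopf commutator of $\Sym$ agrees with the convolution bracket is a worthwhile explicit verification of a point the paper leaves implicit.
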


It is therefore of interest to investigate the expansion of the various Lie
idempotents of the descent algebra on the basis $\c_T$. The Dynkin elements
are obviously the left and right comb trees. The expansion of the Solomon
idempotent is obtained in \cite{BS}. In the sequel, we shall obtain the
following expansion for the Catalan idempotents of \cite{MNT}:

\begin{theorem}
\label{catpbw}
For an element $T(\sigma)$ of the PBW basis of $\Lie(n)$, denote by $t$ the
binary tree consisting of its internal nodes, and let $r(t)$ and $l(t)$ be
respectively the number of right edges and left edges of $t$. Then,
\begin{equation}
D_{a,b}^n = \sum_{T(\sigma)}a^{r(t)}b^{l(t)} T(\sigma),
\end{equation}
where the sum is over all admissible labelled trees.
\end{theorem}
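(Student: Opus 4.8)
The plan is to exhibit both sides of the identity as the degree-$n$ parts of a single formal fixed point in a completion of $\FQSym$, and then to identify them by a degreewise uniqueness argument. First I would isolate the combinatorial kernel of the statement. Clearing the square root in the definition of $\ca$ shows that the series satisfies the quadratic functional equation $\ca=t\,(1+(a+b)\ca+ab\,\ca^2)$. On the other hand, splitting a nonempty binary tree at its root and recording whether each of the two edges issuing from the root is present shows that the generating function $\sum_{t}a^{r(t)}b^{l(t)}t^{|t|}$, by numbers of right and left edges, obeys the very same equation. Hence $\ca_n(a,b)=\sum_{|t|=n}a^{r(t)}b^{l(t)}$; in particular the two sides of the theorem carry the same total weight $\ca_{n-1}(a,b)$ in each degree, which is the reason the Narayana numbers appear.

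Next I would lift this tree recursion into the Catalan algebra $\CC$. Put $\emptyset:=\G_1$ and let $\mathcal E=\sum_{n\ge2}\mathcal E_n$, with $\mathcal E_n=\sum_T a^{r(t)}b^{l(t)}\c_T$ the candidate right-hand side, regarded in the completion $\widehat\CC$. Writing a complete binary tree as $T=T'\wedge T''$, and using $r(t)=r(t')+r(t'')+[t''\neq\emptyset]$ and $l(t)=l(t')+l(t'')+[t'\neq\emptyset]$ together with $\c_{T'\wedge T''}=\c_{T'}\wedge\c_{T''}$ and bilinearity of the grafting, the root decomposition collapses to the fixed-point equation
\[
\mathcal E=(\emptyset+b\,\mathcal E)\wedge(\emptyset+a\,\mathcal E).
\]
This determines $\mathcal E$ degree by degree from the seed $\mathcal E_2=\emptyset\wedge\emptyset=\Psi_2$, since every term on the right that involves a factor $\mathcal E_k$ has degree strictly larger than $k$; hence the equation has a unique solution.

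Now I would translate grafting into quadri-operations. By the computation in the proof of Proposition~\ref{prop:prel} one has $T_1\wedge T_2=T_1\searrow T_2-T_2\swarrow T_1$, so the fixed point becomes an identity in $\widehat\FQSym$,
\[
\mathcal E=(\emptyset+b\,\mathcal E)\searrow(\emptyset+a\,\mathcal E)-(\emptyset+a\,\mathcal E)\swarrow(\emptyset+b\,\mathcal E),
\]
involving only the half-products $\searrow,\swarrow$. Thus the theorem reduces to showing that the total Catalan series $\mathcal D:=\sum_{n\ge2}D_{a,b}^n$ solves this same equation: by the uniqueness just noted $\mathcal D=\mathcal E$, and comparing degree-$n$ components yields $D_{a,b}^n=\sum_T a^{r(t)}b^{l(t)}\c_T$, as claimed.

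The main obstacle is precisely this last verification, that the idempotents defined by their signed-ribbon expansion satisfy the grafting fixed point. Here I would start from the defining sum over sign sequences $\underline{\varepsilon}=(\eta_1)^{n_1}\cdots(\eta_s)^{n_s}$ and track the action of $\searrow$ and $\swarrow$ on signed ribbons, that is, the distribution of the smallest and largest letters under the convolution. Creating a new root corresponds to prepending or merging a maximal run of signs; the factors $a$ and $b$ attached to all non-final runs in the definition of $D_{a,b}^{n}$ match the prefactors in the two slots $\emptyset+b\,\mathcal E$ and $\emptyset+a\,\mathcal E$, while the splitting of a run into sub-blocks, governed by the quadratic term $ab\,\ca^2$, is accounted for exactly by the functional equation of $\ca$ from the first step. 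Making this dictionary between the run-decomposition of $\underline{\varepsilon}$ and the half-products $\searrow,\swarrow$ precise is where the substantive work lies; the remaining steps are formal.
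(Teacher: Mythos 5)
Your setup coincides with the paper's own first step: the weighted sum of the PBW basis is characterized by a quadri-algebra fixed-point equation (this is Proposition \ref{prop:swne} together with Eq.~\eqref{eq:funeq}), and the quadratic equation $\ca=t(1+(a+b)\ca+ab\,\ca^2)$ correctly explains why the total weight in degree $n$ is $\ca_{n-1}(a,b)$. But your proof stops exactly where the substantive work begins. Having reduced the theorem to ``the series $\sum_n D^n_{a,b}$ satisfies the same fixed point,'' you propose to verify this by tracking the action of the half-products on the signed ribbons $R_{\underline{\varepsilon}\bullet}$, and you explicitly defer that verification. This is not a dictionary that can be made precise by routine bookkeeping: the operations $\nearrow,\swarrow,\searrow,\nwarrow$ are defined by the positions of the \emph{values} $1$ and $n$ in the concatenation $\gamma=uv$, which is not a descent statistic, so they do not send sums of descent classes to sums of descent classes. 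There is consequently no closed ``signed-ribbon calculus'' in which your fixed-point identity could be checked; one is forced down to the level of individual permutations, and the fact that the particular combination $x\nearrow y-y\swarrow x$ applied to these particular series lands back in $\Sym$ is essentially the content of the theorem, not an input to it.

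The paper closes the gap by a different route: it expands the unique solution of \eqref{eq:funeq} on permutations, $X_n=\sum_\sigma c_\sigma\G_\sigma$, derives the deconcatenation recursion \eqref{eq-rec-csig}, and solves it in closed form by an induction organized along the maximal monotone runs of $\sigma$, obtaining $c_\sigma=(-1)^{\des(\sigma)}\prod_k m_k\prod_k N_{i_{k+1}-i_k+1}$. This shows a posteriori that $c_\sigma$ depends only on the descent set of $\sigma$, and the resulting descent-class coefficients are exactly those of Theorem \ref{thcat}. Some computation of this kind (or an equivalent one, such as the flow expansion of Chapoton exploited later in the paper) is unavoidable and is absent from your proposal. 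Two secondary points to repair: the grafting is $\c_{T_1\wedge T_2}=\c_{T_1}\nearrow\c_{T_2}-\c_{T_2}\swarrow\c_{T_1}$ (the operation $\nesw$ of Proposition \ref{prop:swne}), not the $\searrow/\swarrow$ combination you quote --- with the definitions of the half-products in the paper one has $\G_1\searrow\G_1=0$, so your seed would come out as $-\G_{21}$ rather than $\Psi_2=\G_{12}-\G_{21}$; and you should check which of $a,b$ multiplies which slot of the grafting, since $a$ must weight right edges and $b$ left edges for the statement as written.
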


In particular, the sum of the PBW basis is a Lie idempotent of the descent
algebra.

The proof of this result involves the fine structure of $\FQSym$, as a
dendriform algebra, and also as a quadri-algebra. As we shall see, it is not
obvious to determine whether an element of ${\CC}$ belongs to the
descent algebra. This question motivates the developments of
Section~\ref{sec:subalg}.

\section{The Catalan idempotents in the PBW basis}
\label{sec:quadri}

This section is devoted to the proof of Theorem~\ref{catpbw}.


\subsection{A functional equation}

For a complete binary tree $T$, let $T_\nesw$ be the evaluation of $T$ with
$\G_1$ in all the leaves, and the operation $\nesw$ in the internal nodes.

\begin{proposition}\label{prop:swne}
The operation $\nesw$ is magmatic, and
\begin{equation}
T_\nesw = \sum_{\sigma\ {\rm admissible}}T(\sigma) = \c_T
\end{equation}
is the sum of all trees of shape $T$ in the PBW basis.
\end{proposition}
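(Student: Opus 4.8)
The plan is to prove both claims of Proposition~\ref{prop:swne} by induction on the number of internal nodes of $T$, using the recursive structure of the tree $T=T'\wedge T''$ together with the coproduct formulas for $\nesw$ that follow from the quadri-coalgebra identities stated earlier. First I would treat the statement that $\nesw$ is magmatic: since $\nesw$ is a single binary operation with no relations imposed a priori, ``magmatic'' here should mean that the bracketing structure is faithfully recorded, i.e. distinct complete binary trees $T$ evaluate to linearly independent elements $T_\nesw$, so that $T\mapsto T_\nesw$ is injective. This will follow once the second identity $T_\nesw=\c_T$ is established, because the $\c_T$ are sums over disjoint sets of admissible labellings and hence manifestly linearly independent.

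The core of the argument is the identity $T_\nesw=\c_T$. I would set up the induction by writing $T=T'\wedge T''$, so that $T_\nesw = T'_\nesw \nesw T''_\nesw$, and by the induction hypothesis $T'_\nesw=\c_{T'}$ and $T''_\nesw=\c_{T''}$. Expanding the definition $x\nesw y = x\nearrow y - y\swarrow x$ on the generators $\c_{T'}=\sum_\sigma T'(\sigma)$ and $\c_{T''}=\sum_\tau T''(\tau)$, the key step is to read off, directly from the quadri-products $\nearrow$ and $\swarrow$ in $\FQSym$, exactly which labelled trees of shape $T'\wedge T''$ appear. The crucial combinatorial observation is that in $\c_{T'}\nearrow \c_{T''}$ the smallest letter $1$ lands in the left factor and the largest letter $n$ in the right factor, which is precisely the admissibility condition at the new root node: the minimum goes into the left subtree and the maximum into the right subtree. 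Conversely $\c_{T''}\swarrow \c_{T'}$ contributes the trees where $1\in $ right factor and $n\in $ left factor, which are \emph{not} admissible for the root $T'\wedge T''$ but which cancel against the non-admissible contributions already present, so that after taking the difference $\nearrow - \swarrow$ exactly the admissibly labelled trees of shape $T$ survive, each with coefficient one.

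The main obstacle I anticipate is the bookkeeping that shows the difference $\nearrow-\swarrow$ produces each admissible labelling of $T$ exactly once, with no spurious terms and no overcounting. This is where the refined half-products genuinely matter: the plain product $\c_{T'}\c_{T''}$ (or even the dendriform halves $\droitdend,\gaudend$) would not isolate the correct placement of both extremal letters simultaneously, whereas the four quadri-products track $1$ and $n$ independently. I would verify that for admissible $\sigma'$ on $T'$ and admissible $\tau'$ on $T''$, every shuffle of their underlying words that places $1$ in the left block and $n$ in the right block standardizes to a unique admissible labelling of $T'\wedge T''$, and that the $\swarrow$ term removes precisely the configurations violating this. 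Once this is checked at the root node, the induction closes immediately, giving $T_\nesw=\c_T$; linear independence of the $\c_T$ then yields injectivity of $T\mapsto T_\nesw$, i.e. the magmatic claim.
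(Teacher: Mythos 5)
Your overall strategy is the right one, and it is essentially the paper's: the proof reduces to the observation that for $T=T'\wedge T''$ the admissible labellings of $T$ are exactly the $T(\gamma)$ with $\gamma=uv$ a convolution term of an admissible labelling of $T'$ and one of $T''$ subject to $1\in u$, $n\in v$, and that $\nesw$ implements precisely this recursion (the paper phrases this as Proposition~\ref{prop:labels} applied with $B=\nesw$). Your reading of ``magmatic'' as injectivity of $T\mapsto T_\nesw$, deduced from the linear independence of the PBW elements, is also acceptable.

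However, your account of the key step is wrong, and the difficulty you propose to resolve is not the actual difficulty. You claim that $\c_{T''}\swarrow\c_{T'}$ ``cancels against the non-admissible contributions already present'' in $\c_{T'}\nearrow\c_{T''}$. There are no such contributions: by definition $\G_\alpha\nearrow\G_\beta$ contains \emph{only} the convolution terms $\gamma=uv$ with $1\in u$ and $n\in v$, so nothing needs to be removed, and indeed nothing cancels between the two products (they are supported on disjoint sets of permutations, the $\alpha$-block being a prefix in the first and a suffix in the second). The subtracted term is not a correction; it is the other half of a commutator. Re-indexing $\G_\beta\swarrow\G_\alpha$ by the set $S_1\ni 1$ of values assigned to the $\alpha$-block, the two sums pair up to give $\sum_{S_1\ni 1,\,S_2\ni n}(\G_{uv}-\G_{vu})=\sum[u,v]$, i.e. the convolution bracket restricted to the splittings that are admissible at the new root. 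This is indispensable because $T(\gamma)$ is the Lie bracketing $[T'(\gamma_1),T''(\gamma_2)]$, whose expansion into permutations requires both the $uv$ and the $-vu$ terms; keeping only the $\nearrow$ half would produce the concatenation $T'(\gamma_1)\cdot T''(\gamma_2)$, which is not a Lie element and not $\c_T$. What actually remains to be checked to close the induction is (a) this pairing, applied bilinearly to the signed permutation expansions of $T'(\gamma_1)$ and $T''(\gamma_2)$, and (b) that the order-preserving relabelling by $S_1$ and $S_2$ preserves admissibility at all internal nodes of $T'$ and $T''$, so that the resulting labellings of $T$ are exactly the admissible ones, each occurring once. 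Neither point appears in your proposal, so as written the central step does not go through.
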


\Proof
This follows from Prop. \ref{prop:labels} with $B=\nesw$.
\qed

It follows from the previous considerations that the formal sum $X$ of the PBW
basis satisfies the functional equation
\begin{equation}
X = \G_1 + B(X,X),\quad \text{where $B(X,Y)=X\nesw Y$},
\end{equation}
and setting $X=\G_1+X_+$, we can introduce parameters to count left and right
internal branches: the sum $X$ of the expressions for the Catalan idempotents
proposed in Theorem~\ref{catpbw} is the unique solution of
\begin{equation}\label{eq:funeq}
X_+ = B(\G_1+aX_+,\G_1+bX_+).
\end{equation}

From now on, we shall write $\sigma$ for $\G_\sigma$, and set, for $n\geq 2$,
\begin{align}
X_n&=\sum_{\sigma \in \mathfrak{S}_n} c_\sigma \sigma.
\end{align}
Let $\sigma\in \SG_n$, with $n\geq 2$. Let us first consider the case where
$n$ precedes $1$ in $\sigma$.
Since $X = \G_1 + X\nearrow X - X\swarrow X$, the products contributing to
$c_\sigma$ correspond to products $\tau\swarrow\mu$ containing $\sigma$ which
amounts to to considering all ways of writing $\sigma$ as $u\cdot v$ with
$1\in v$ and $n\in u$. The other case for $\sigma$ is similar so that
$c_\sigma$ is inductively
\begin{equation}
\label{eq-rec-csig}
c_\sigma =
\sum_{\gf{\sigma=u.v}{1\in u, n\in v}} a_{|u|}b_{|v|} c_{\std(u)}c_{\std(v)}
- \sum_{\gf{\sigma=u.v}{n\in u, 1\in v}} b_{|u|}a_{|v|} c_{\std(u)}c_{\std(v)}.
\end{equation}

Note that only one sum contributes to a nonzero coefficient for $c_\sigma$.

\begin{lemma}
For  $\sigma\in \SG_n$, define
$\overline{\sigma}=\sigma\circ (n,\ldots,1)=(\sigma_n,\ldots,\sigma_1)$.
Then:
\begin{align*}
c_\sigma&=(-1)^{n-1}c_{\overline{\sigma}}.
\end{align*} \end{lemma}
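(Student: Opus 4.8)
The plan is to prove the identity by induction on $n$, using the recursion \eqref{eq-rec-csig} together with the behaviour of its ingredients under word reversal. The base case $n=1$ is immediate, since $c_{\G_1}=1$ and $\overline{\G_1}=\G_1$. For the inductive step, the key phenomenon is that reversal $\sigma\mapsto\overline\sigma$ interchanges the two sums appearing in \eqref{eq-rec-csig}.

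First I would record three compatibility facts for the reversal $w\mapsto\overline w$. (a) If $\sigma=u\cdot v$ is a concatenation of words, then $\overline\sigma=\overline v\cdot\overline u$; hence the factorizations of $\sigma$ correspond bijectively to those of $\overline\sigma$ via $(u,v)\mapsto(u',v')=(\overline v,\overline u)$, a bijection that exchanges the sizes $|u|\leftrightarrow|v|$. (b) Standardization commutes with reversal: $\std(\overline w)=\overline{\std(w)}$, which follows at once from the rank description of $\std$ for words with distinct letters. (c) Since reversal sends position $i$ to position $n+1-i$, the letter $n$ precedes the letter $1$ in $\sigma$ if and only if $1$ precedes $n$ in $\overline\sigma$. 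Fact (c) is exactly what makes reversal swap the ``$1\in u,\,n\in v$'' sum with the ``$n\in u,\,1\in v$'' sum.

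Next, fix $\sigma\in\SG_n$ with $n$ before $1$, so by \eqref{eq-rec-csig} only the second (negative) sum contributes to $c_\sigma$, while by (c) only the first (positive) sum contributes to $c_{\overline\sigma}$. Under the bijection $(u,v)\mapsto(u',v')=(\overline v,\overline u)$ of (a), the condition $n\in u,\ 1\in v$ becomes $1\in u',\ n\in v'$, matching the sum defining $c_{\overline\sigma}$. For each term I then compare the two factors: the scalar $a_{|u'|}b_{|v'|}=a_{|v|}b_{|u|}=b_{|u|}a_{|v|}$, since these weights depend only on the preserved sizes and the scalars commute; and by (b) together with the induction hypothesis applied to the strictly smaller words $u,v$, one gets $c_{\std(u')}c_{\std(v')}=c_{\overline{\std(v)}}\,c_{\overline{\std(u)}}=(-1)^{|v|-1}(-1)^{|u|-1}c_{\std(v)}c_{\std(u)}=(-1)^{n-2}c_{\std(u)}c_{\std(v)}$.

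Summing over all factorizations then yields $c_{\overline\sigma}=(-1)^{n-2}\sum b_{|u|}a_{|v|}c_{\std(u)}c_{\std(v)}=(-1)^{n-2}(-c_\sigma)=(-1)^{n-1}c_\sigma$, which is the claim; the case ``$1$ before $n$'' is identical after exchanging the roles of the two sums. The step I expect to require the most care is the sign bookkeeping of (c): one must verify that reversal genuinely toggles which of the two sums in \eqref{eq-rec-csig} is nonzero, and then check that the built-in sign discrepancy between those sums (the $+$ of the $\nearrow$-term against the $-$ of the $\swarrow$-term) combines with the inductive sign $(-1)^{n-2}$ to give precisely $(-1)^{n-1}$ rather than $(-1)^{n}$.
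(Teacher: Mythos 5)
Your proof is correct and follows essentially the same route as the paper's: induction on $n$ via the recursion \eqref{eq-rec-csig}, using the bijection $(u,v)\mapsto(\overline v,\overline u)$ between factorizations of $\sigma$ and of $\overline\sigma$, the compatibility $\std(\overline w)=\overline{\std(w)}$, and the parity observation $|u|+|v|=n$. You merely make explicit some details the paper leaves implicit (in particular the swap of the two sums and the final sign check), and your bookkeeping is accurate.
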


\begin{proof}
This follows by induction from Eq. ~\eqref{eq-rec-csig}. Indeed,
\begin{equation}
\begin{split}
c_{\overline\sigma} &=
\sum_{\gf{\overline\sigma=u.v}{1\in u, n\in v}} a_{|u|}b_{|v|} c_{\std(u)}c_{\std(v)}
- \sum_{\gf{\overline\sigma=u.v}{n\in u, 1\in v}} b_{|u|}a_{|v|}
  c_{\std(u)}c_{\std(v)} \\
&= - \sum_{\gf{\sigma=u.v}{1\in u, n\in v}} a_{|u|}b_{|v|}
c_{\std(\overline u)}c_{\std(\overline v)}
+ \sum_{\gf{\overline\sigma=u.v}{n\in u, 1\in v}} b_{|u|}a_{|v|}
  c_{\std(\overline u)}c_{\std(\overline v)}.
\end{split}
\end{equation}
Now, $|\sigma|$ is equal to $|u|+|v|$ hence of the same parity so that we recover
$c_\sigma$ with the correct sign.
\end{proof}

The {\it Narayana polynomials} $N_n(a,b):=\ca_{n-1}(a,b)$
satisfy the recurrence
\begin{equation}
N_n=\begin{cases}
1\mbox{ if }n\leq 2,\\
\displaystyle (a+b)N_{n-1}+ab\sum_{k=2}^{n-2} N_k  N_{n-k}\mbox{ if }n\geq 3.
\end{cases}
\end{equation}


Let $\sigma\in \SG_n$. We define the maximal runs of $\sigma$ as the integers
$i_0=1<i_1<\ldots<i_p<n=i_{p+1}$ satisfying the conditions
\begin{itemize}
\item For any $0\leq k\leq p$, $\sigma_{{\mid \{i_k,\ldots i_{k+1}\}}}$ is
monotonous.
\item For any $0\leq k\leq p-1$, $\sigma_{{\mid \{i_k,\ldots i_{k+2}\}}}$ is
not monotonous.
\end{itemize}

\subsection{Proof of Theorem \ref{catpbw}.}

We are now in a position to compute $c_\sigma$ by induction. Thanks to
Eq.~\eqref{eq-rec-csig}, we know that $c_\sigma$ is obtained by
deconcatenating $\sigma$ at all positions between $1$ and $n$. The main
ingredient of the proof consists in observing that summing over positions
inside a given run gives a simple inductive formula.

\begin{proposition}
Let $\sigma\in \SG_n$. Let $(i_0,\dots,i_{p+1})$ be its sequence of runs and,
for $0\leq k\leq p$, put:
\begin{align*}
m_k&=\begin{cases}
a\mbox{ if $\sigma_{\mid \{i_k,\ldots i_{k+1}\}}$ is increasing},\\
b\mbox{ if $\sigma_{\mid \{i_k,\ldots i_{k+1}\}}$ is decreasing}.
\end{cases}
\end{align*}
Let $\des(\sigma)$ denote the number of descents of $\sigma$. Then:
\begin{align}
c_{\sigma}
&=(-1)^{\des(\sigma)}\prod_{k=0}^{p-1} m_k \prod_{k=0}^p N_{i_{k+1}-i_k+1}.
\end{align}
\end{proposition}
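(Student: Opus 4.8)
The plan is to prove the stated formula by induction on $n$, by checking that its right-hand side satisfies the recursion \eqref{eq-rec-csig}; since that recursion, together with the base values $c_1=1$, $c_{12}=1$, $c_{21}=-1$, determines every $c_\sigma$, this suffices. First I would invoke the sign Lemma $c_\sigma=(-1)^{n-1}c_{\overline\sigma}$ to reduce to the case in which $1$ precedes $n$ in $\sigma$; then only the first ($\nearrow$) sum in \eqref{eq-rec-csig} contributes, and the admissible deconcatenations $\sigma=u\cdot v$ are exactly the cuts at a position $j$ lying strictly between the position of $1$ and the position of $n$. As a sanity check one verifies that the proposed product is itself compatible with the sign Lemma: reversal exchanges the two run directions and sends $\des\sigma$ to $n-1-\des\sigma$, while leaving the multiset of run lengths, hence $\prod_k N_{i_{k+1}-i_k+1}$, invariant.

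The engine of the induction is the behaviour of a single run. Substituting the induction hypothesis into the active sum, I would group the cut positions $j$ according to the run $r$ in which they fall, and inside run $r$ according to the lengths $s$ and $t$ of the two monotone pieces into which the cut splits it, so that $s+t=i_{r+1}-i_r+1=:\ell$. For a cut interior to the run, the standardizations $\std u$ and $\std v$ inherit all the other runs of $\sigma$ unchanged and carry the two new Narayana factors $N_s$ and $N_t$; summing over the split point then reassembles a single factor $N_\ell$ through exactly the Narayana recurrence $N_\ell=(a+b)N_{\ell-1}+ab\sum_{k=2}^{\ell-2}N_kN_{\ell-k}$. Meanwhile the weights $a_{|u|}b_{|v|}$, combined with the directions of the runs, build up the product $\prod_{k=0}^{p-1}m_k$, and the single descent destroyed at the cut (present precisely when the run is decreasing) accounts for the sign $(-1)^{\des\sigma}$. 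In the purely monotone case, where the span of cuts is the whole of $\sigma$, this is already the complete argument: the sum over cuts gives $N_n$ directly.

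The hard part will be the boundary bookkeeping. There are two regimes in which the clean interior picture breaks: the junction cuts, where one of the two pieces of a run is trivial and that run merges with its neighbour in $\std u$ or $\std v$, so that one factor $m_k$ is momentarily lost from the product; and the cuts adjacent to the interior occurrences of $1$ and $n$, where the weight $a_{|u|}b_{|v|}$ ceases to be simply $ab$. I would have to check that in each regime the missing or extra $m$- and $N$-factors cancel correctly against the contributions of the neighbouring runs, and in particular that the $(a+b)N_{\ell-1}$ part of the Narayana recurrence is produced by the varying weights $a_{|u|}b_{|v|}$ for the runs touching the extrema, but by the change in the $m$-product for the genuinely interior runs. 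Reconciling these two mechanisms, and verifying that after summation every spurious factor telescopes away to leave exactly $(-1)^{\des\sigma}\prod_{k=0}^{p-1}m_k\prod_{k=0}^{p} N_{i_{k+1}-i_k+1}$, is where essentially all the work lies.
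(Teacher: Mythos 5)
Your overall strategy is exactly the paper's: induct on $n$, verify that the claimed product satisfies the recursion \eqref{eq-rec-csig}, reduce to the case where $1$ precedes $n$ via the sign lemma, group the cut positions by the run they fall in, and reassemble each run's Narayana factor through the recurrence $N_\ell=(a+b)N_{\ell-1}+ab\sum_{k}N_kN_{\ell-k}$ using $\{m_{k-1},m_k\}=\{a,b\}$. Up to that point the proposal is sound, and your ``sanity check'' that the product is compatible with reversal is essentially the verification the paper performs at the end (it needs the small parity argument that when $p$ is odd the first run of $\sigma$ is increasing iff its last run is decreasing, but that does go through).

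However, you stop precisely where the proof actually happens, and the difficulty you flag is not the right one. You describe the remaining work as showing that ``spurious factors telescope away'' across junction and extremal cuts. That is not the mechanism. The key structural fact, which you never state, is that the positions $\sigma^{-1}(1)$ and $\sigma^{-1}(n)$ are themselves run boundaries, say $i_\alpha$ and $i_\beta$ with $\alpha<\beta$; the admissible cuts therefore sweep exactly the runs $k=\alpha,\dots,\beta-1$, and after summing over the offsets $\ell$ inside run $k$ (where the boundary cases $\ell=0$ and $\ell=i_{k+1}-i_k-1$ supply the $(a+b)N_{\ell-1}$ part of the Narayana recurrence and the interior cuts supply the $ab\sum N_jN_{\ell-j}$ part), each run contributes the \emph{entire} claimed product multiplied by a sign $\varepsilon_k=\pm1$ according to whether the run is increasing or decreasing. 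Nothing telescopes; instead one is left with the factor $\sum_{k=\alpha}^{\beta-1}\varepsilon_k$, and one must prove this alternating sum equals $1$. This follows because the runs alternate in direction, so $\varepsilon_k=(-1)^{k-\alpha}$, and because the run starting at $\sigma^{-1}(1)$ and the run ending at $\sigma^{-1}(n)$ are both increasing, which forces $\beta-\alpha$ to be odd, whence $\sum_{k=0}^{\beta-\alpha-1}(-1)^k=1$. Without this identification of where $1$ and $n$ sit relative to the runs and without the alternating-sum computation, the induction does not close; as written, your argument only covers the purely monotone case and leaves the general case as an unproved claim.
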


\begin{proof}
We proceed by induction on $n$. This is obvious if $n=1$.

Let us first assume that $\sigma^{-1}_1<\sigma^{-1}_n$.
There exists $0\leq \alpha <\beta \leq k+1$ such that $\sigma^{-1}_1=i_\alpha$
and $\sigma^{-1}_n=i_\beta$. Hence:
\begin{align*}
c_\sigma&=\sum_{k=\alpha}^{\beta-1} \sum_{\ell=0}^{i_{k+1}-i_k-1}
a_{i_1+\ldots+i_k+\ell}b_{n-i_1-\ldots-i_k-\ell}
c_{\std(\sigma_1,\ldots,\sigma_{i_1+\ldots+i_k+\ell})}
c_{\std(\sigma_{i_1+\ldots+i_k+\ell+1},\ldots,\sigma_n)}\\
&=(-1)^{\des(\sigma)}\sum_{k=\alpha}^{\beta-1}
 \sum_{\ell=0}^{i_{k+1}-i_k-1}
\frac{M}{m_k}
N_{i_1-i_0+1}\ldots N_{i_k-i_{k-1}+1}N_{i_{k+2}-i_{k+1}+1}\ldots
N_{i_{p+1}-i_p+1}\varepsilon_k \mu_{k,\ell},
\end{align*}
with
\begin{equation}
M =\prod_{i=0}^{p-1}m_i,
\end{equation}
\begin{equation}
\varepsilon_k=\begin{cases}
+1\mbox{ if $\sigma_{\mid \{i_k,\ldots i_{k+1}\}}$ is increasing},\\
-1\mbox{ if $\sigma_{\mid \{i_k,\ldots i_{k+1}\}}$ is decreasing};\\
\end{cases}
\end{equation}

\begin{equation}
\mu_{k,\ell} = \begin{cases}
m_k N_{i_{k+1}-i_k+1}\mbox{ if }\ell=0,\\
m_{k-1}N_{i_{k+1}-i_k+1}\mbox{ if }\ell=i_{k+1}-i_k-1,\\
m_{k-1}m_k N_{\ell+1}N_{i_{k+1}-i_k-\ell} \mbox{ if }1\leq \ell\leq i_{k+1}-i_k-2.
\end{cases}
\end{equation}

As $\{m_{k-1},m_k\}=\{a,b\}$, summing over $\ell$ yields
\begin{equation}
\sum_{\ell=0}^{i_{k+1}-i_k-1}\mu_{k,\ell}=N_{i_{k+1}-i_k+1},
\end{equation}
and we  obtain
\begin{equation}
c_{\sigma}=(-1)^{\des(\sigma)}\prod_{k=0}^{p-1} m_k \prod_{k=0}^p N_{i_{k+1}-i_k+1} \sum_{k=\alpha}^{\beta-1} \varepsilon_k.
\end{equation}

As $i_\alpha=\sigma^{-1}_1$,
$\sigma_{{\mid\{i_\alpha,\ldots,i_{\alpha+1}\}}}$ is increasing, so
$\varepsilon_k=(-1)^{k-\alpha}$.
As $i_{\beta}=\sigma^{-1}_n$,
$\sigma_{{\mid \{i_{\beta-1},\ldots,i_\beta\}}}$ is increasing, so
$\beta-\alpha-1$ is even.
Hence,
\begin{equation}
\sum_{k=\alpha}^{\beta-1} \varepsilon_k
=\sum_{k=0}^{\beta-\alpha-1}(-1)^k=\frac{1-(-1)^{\beta-\alpha}}{2}=1,
\end{equation}
which finally gives the announced result for $c_\sigma$.

\medskip
Let us now assume that $\sigma^{-1}_n<\sigma^{-1}_1$. Applying the previous
result to $\overline{\sigma}$, we obtain:
\begin{equation}
c_{\sigma}
=(-1)^{n-1-\des(\overline{\sigma})}
  \prod_{k=0}^{p-1} m'_k \prod_{k=0}^p N_{i_{k+1}-i_k+1}.
\end{equation}
If $p$ is even, then
\begin{equation}
\prod_{k=0}^{p-1} m'_k=\prod_{k=0}^{p-1} m_k=(ab)^{\frac{p}{2}}.
\end{equation}
If $p$ is odd, then
\begin{align}
\prod_{k=0}^{p-1} m_k&=\begin{cases}
(ab)^{\frac{p-1}{2}}a\mbox{ if $\sigma_{{\mid \{i_0,\ldots,i_1\}}}$ is increasing},\\
(ab)^{\frac{p-1}{2}}b\mbox{ if $\sigma_{{\mid \{i_0,\ldots,i_1\}}}$ is decreasing},\\
\end{cases}\\
\prod_{k=0}^{p-1} m'_k&=\begin{cases}
(ab)^{\frac{p-1}{2}}a\mbox{ if $\sigma_{{\mid \{i_p,\ldots,i_{p+1}\}}}$ is decreasing},\\
(ab)^{\frac{p-1}{2}}b\mbox{ if $\sigma_{{\mid \{i_p,\ldots,i_{p+1}\}}}$ is increasing}.
\end{cases}
\end{align}
As $p$ is odd, $\sigma_{{\mid \{i_p,\ldots,i_{p+1}\}}}$ is decreasing if, and
only if, $\sigma_{{\mid \{i_0,\ldots,i_1\}}}$ is increasing.
Therefore, in all cases, 
\begin{equation}
\prod_{k=0}^{p-1} m'_k=\prod_{k=0}^{p-1} m_k,
\end{equation}
and finally the result is proved for any $\sigma$.
\end{proof}

\begin{corollary}
If the descent sets of $\sigma$ and $\tau$ are equal, then $c_\sigma=c_\tau$.
\end{corollary}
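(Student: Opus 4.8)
The plan is to deduce this corollary directly from the explicit formula for $c_\sigma$ established in the preceding proposition, by checking that every quantity entering that formula is in fact a function of $\Des(\sigma)$ alone. Recall the formula reads
\begin{equation}
c_\sigma=(-1)^{\des(\sigma)}\prod_{k=0}^{p-1}m_k\prod_{k=0}^{p}N_{i_{k+1}-i_k+1},
\end{equation}
where $(i_0,\dots,i_{p+1})$ is the sequence of runs of $\sigma$ and each $m_k\in\{a,b\}$ records whether the $k$-th run is increasing or decreasing. Since the proof of that proposition covered both cases $\sigma^{-1}_1<\sigma^{-1}_n$ and $\sigma^{-1}_n<\sigma^{-1}_1$ and produced the same closed form, I may invoke this uniform expression without reopening the case distinction.

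First I would observe that $\des(\sigma)=|\Des(\sigma)|$, so the sign $(-1)^{\des(\sigma)}$ is manifestly a function of the descent set. The substance of the argument is then to show that the \emph{entire} run structure of $\sigma$ — the boundary positions $i_0<\dots<i_{p+1}$ together with the directions $m_k$ — is determined by $\Des(\sigma)$. Indeed, $\Des(\sigma)$ specifies, for every $i\in\{1,\dots,n-1\}$, whether position $i$ is an ascent ($i\notin\Des(\sigma)$) or a descent ($i\in\Des(\sigma)$); by definition the maximal runs are precisely the maximal blocks of consecutive positions of the same type, so both the $i_k$ and the monotonicity of each run are read off directly from the ascent/descent pattern. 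Consequently the run lengths $i_{k+1}-i_k$, and hence the Narayana factors $N_{i_{k+1}-i_k+1}$, as well as each $m_k$, depend only on $\Des(\sigma)$.

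Putting these together, every factor in the displayed formula is a function of $\Des(\sigma)$, so $\Des(\sigma)=\Des(\tau)$ forces $c_\sigma=c_\tau$, which is the claim. There is essentially no obstacle beyond making the dictionary between the descent set and the run data precise; the one point warranting a moment's care is the boundary convention ($i_0=1$, $i_{p+1}=n$) and the verification that the direction $m_k$ is unambiguously fixed by whether the first step of the $k$-th run is an ascent or a descent — which it is, since by maximality all steps inside a single run have the same type.
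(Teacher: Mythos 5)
Your proposal is correct and follows exactly the route the paper intends: the corollary is an immediate consequence of the explicit formula $c_{\sigma}=(-1)^{\des(\sigma)}\prod_{k}m_k\prod_{k}N_{i_{k+1}-i_k+1}$, since the run boundaries, the directions $m_k$, and the number of descents are all read off from the ascent/descent pattern, i.e.\ from $\Des(\sigma)$. The paper leaves this deduction implicit; you have simply spelled it out.
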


Comparing the explicit expressions for the coefficients, this proves
Theorem~\ref{catpbw}.

\section{A new basis of $\CC$}

\subsection{From binary trees to plane trees}

In this section, it will be convenient to label the basis elements of
$\CC$ by plane rooted trees instead of binary trees. 

The Butcher product $\T_1 \Rsh \T_2$ of two plane rooted trees $\T_1$, $\T_2$
is obtained by grafting $\T_1$ on the root of $\T_2$, on the left.
Then one defines the usual bijection from plane trees to binary trees as
the Knuth rotation $K$ recusively defined by $K(\bullet)=\epsilon$ (the
empty binary tree) and for any plane rooted trees $\T$ and $\T'$,
\begin{equation}
K(\T\Rsh \T')=K(\T)\wedge K(\T').
\end{equation}
This correspondence is illustrated on Figures~\ref{tam3} and~\ref{tam4} at the
end of the paper, representing the Tamari order for $n=3,4$.

\medskip
We shall make use of some typographical distinctions to help understand which
object is of what type: if $T$ is a complete binary tree, we denote by $t$ the
binary tree obtained by removing its leaves, and by $\T$ the corresponding
plane tree.

\medskip
The cover relation of the Tamari order on plane trees is described as follows:
starting from a tree $\T$ and a vertex $x$ that is neither its root or a leaf,
the trees $\T'>\T$ covering $\T$ are obtained by cutting off the leftmost
subtree of $x$ and grafting it back on the left of the parent of $x$.

\medskip
Now, in the $\P$ basis of $\PBT$ ({\it cf. }\cite{HNT} for background and notations), 
the product $\P_{t_1}\P_{t_2}$ is an
interval of the Tamari order, which is described on plane trees as follows.
Let us define $\T_1=B(\U_1\cdots \U_r)$ where $\U_1\dots\U_r$ are the subtrees
of $\T_1$ and let $b_1,\dots,b_n$ be the vertices of the leftmost branch of
$\T_2$, $b_1$ being its root and $b_n$ its leftmost leaf.

Then, any nondecreasing sequence
$0=i_0\leq i_1\leq i_2\leq\dots\leq i_n=r$ of $n$ indices
corresponds to one term of the product $\P_{\T_1}\P_{\T_2}$:
define the plane forest $F_1^{(k)}=(\U_{i_{k-1}+1},\dots,\U_{i_{k}})$ for all
$k\in[1,n]$.
The corresponding term of the product (in the basis $\P$) is the tree obtained
by grafting all the elements of the forest $F_1^{(i)}$, respecting their order,
on the vertex $b_i$ and to the left of $b_{i+1}$, see Figure~\ref{prodt1t2}.

\vskip1cm
\begin{figure}[ht]
\centerline{
\newdimen\vcadre\vcadre=0.01cm 
\newdimen\hcadre\hcadre=0.01cm 
\xymatrix@R=0.1cm@C=2mm{
 && {\GrTeXBox{b_1}}\arx1[dl]\arx1[dr]\arx1[drrr]\\
 & {\GrTeXBox{\U_1\dots \U_{i_1}}}  & *{}
   & {\GrTeXBox{b_2}}\arx1[dl]\arx1[dr]\arx1[drrr] && {\GrTeXBox{\dots}} \\
 && {\GrTeXBox{\U_{i_1+1}\dots \U_{i_2}}}
   && {\GrTeXBox{b_3}}\arx1[dl]\ar@{.}[dr]\arx1[drrr]
   && {\GrTeXBox{\dots}} \\
 &&& {\GrTeXBox{\U_{i_2+1}\dots \U_{i_3}}}
   && {\GrTeXBox{b_n}}\arx1[dl]
   && {\GrTeXBox{\dots}} \\
 &&&& {\GrTeXBox{\U_{i_{n-1}+1}\dots \U_{i_n}}} \\
}
}
\caption{
\label{prodt1t2}
A generic element of the product $\P_{\T_1} \P_{\T_2}$, where
$\U_1,\dots,\U_{i_n}=\U_r$ are, in this order, the children of the root of
$\T_1$ and the $b_i$ are the vertices of the left branch of $\T_2$.}
\end{figure}
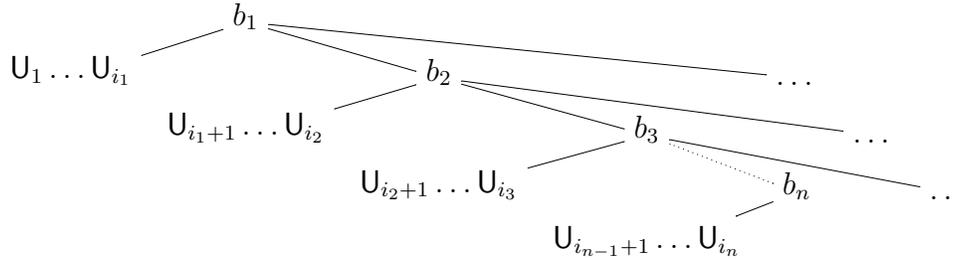

\subsection{A new basis of $\CC$}

Define a new basis $\x_t$ of $\CC_n$ (indexed by incomplete
binary trees of size $n-1$) by the condition
\begin{equation}\label{eq:basex} 
\c_t = \sum_{u\le t}\x_u
  \quad \Leftrightarrow\quad
\x_t = \sum_{u\le t}\mu(u,t)\c_u
\end{equation}
where $\le$ is the Tamari order, and $\mu$ its Moebius funtion.

For example,
\begin{align}
\x_{\Btabc} &= \c_{\Btabc}\\
\x_{\Btbac} &= \c_{\Btbac} - \c_{\Btabc}\\
\x_{\Btcab} &= \c_{\Btcab} - \c_{\Btbac}\\
\x_{\Btacb} &= \c_{\Btacb} - \c_{\Btabc}\\
\x_{\Btcba} &= \c_{\Btcba} - \c_{\Btacb} - \c_{\Btcab} + \c_{\Btabc}.
\end{align}

For a binary tree $t$ of size $n-1$, denote by $\T=K^{-1}(t)$ the plane
tree of size $n$ corresponding to $t$, and set $X_\T= \x_t$ and $C_\T=\c_t$.
Then on plane trees, the previous equations read as (see Figure~\ref{tam3}
for the bijection between incomplete binary trees of size $3$ and plane trees
of size $4$)

\begin{align}
X_{\arbtge} &= C_{\arbtge}                \\
X_{\arbtgc} &= C_{\arbtgc} - C_{\arbtge}  \\
X_{\arbtgd} &= C_{\arbtgd} - C_{\arbtgc}  \\
X_{\arbtgb} &= C_{\arbtgb} - C_{\arbtge}  \\
X_{\arbtga} &= C_{\arbtga} - C_{\arbtgd} - C_{\arbtgb} + C_{\arbtge}
\end{align}

\begin{theorem}
\label{th:xprel}
The preLie product in the $X$ basis is given by
\begin{equation}
\label{prelie-x}
X_{\T_1}\triangleright X_ {\T_2} = \sum_{\T\in G(\T_1,\T_2)}X_\T 
\end{equation}
where $G(\T_1,\T_2)$ is the multiset of trees obtained by grafting $\T_1$ on all
nodes of $\T_2$ in all possible ways.
\end{theorem}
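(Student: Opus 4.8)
The plan is to show that the correspondence $\T\mapsto X_\T$ is an isomorphism of preLie algebras from the linear span of plane rooted trees, equipped with the node-grafting product $\T_1\btr\T_2=\sum_{\T\in G(\T_1,\T_2)}\T$, onto $(\CC,\triangleright)$. Since Proposition~\ref{prop:prel} already determines $\triangleright$ on the $\c$-basis, and since the $X$-basis is obtained from the $C$-basis by Möbius inversion over the Tamari order through \eqref{eq:basex}, the whole argument consists in transporting that recursion across the change of basis. I would argue by induction on $n=|\T_2|$, working throughout on plane trees via the Knuth rotation $K$, under which the grafting $\wedge$ of binary trees becomes the Butcher product $\Rsh$, so that $C_{\T\Rsh\T'}=\c_{K(\T)\wedge K(\T')}$ and $\T_2$ carries the decomposition $\T_2=\U_1\Rsh\T_2'$, where $\U_1$ is the leftmost subtree of the root and $\T_2'$ holds the remaining children.

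Under $K$, Proposition~\ref{prop:prel} reads, in the $C$-basis (writing $\T$ for $C_\T$),
\begin{equation}
\T_1\droit(\U_1\Rsh\T_2')=\T_1\Rsh(\U_1\Rsh\T_2')+(\T_1\droit\U_1)\Rsh\T_2'+\U_1\Rsh(\T_1\droit\T_2')-(\T_1\Rsh\U_1)\Rsh\T_2'-(\U_1\Rsh\T_1)\Rsh\T_2'.
\end{equation}
The base case $\T_2=\bullet$ is the generator $X_\bullet=C_\bullet=\G_1$: from $a\triangleright b=a\droitdend b-b\gaudend a$ together with $\G_\sigma\droitdend x=\G_{\sigma\cdot(n+1)}$ and $x\gaudend\G_\sigma=\G_{(n+1)\cdot\sigma}$ one finds $X_{\T_1}\triangleright\G_1=X_{\T_1\Rsh\bullet}$, the unique term of $G(\T_1,\bullet)$. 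For the inductive step I would compute $X_{\T_1}\triangleright X_{\T_2}$ by expanding $X_{\T_1}$ and $X_{\T_2}$ in the $C$-basis through \eqref{eq:basex}, applying the recursion above (in which $\U_1$ and $\T_2'$ are strictly smaller than $\T_2$, so the products involving them reduce, after the same Möbius conversion, to strictly smaller instances of the identity), and re-expanding the outcome in the $X$-basis.

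The heart of the argument, and the step I expect to be the main obstacle, is the bookkeeping of this double passage through Möbius inversion: one must verify that, after re-expression in the $X$-basis, the signed contributions of the five terms on the right collapse to exactly one copy of $X_\T$ for each plane tree $\T$ obtained by grafting $\T_1$ at a single node of $\T_2$ in one of its available slots --- no more, no less. This is precisely where the explicit description of $\P_{\T_1}\P_{\T_2}$ as a Tamari interval (Figure~\ref{prodt1t2}) enters: the Butcher products $\T_1\Rsh(\U_1\Rsh\T_2')$, $(\T_1\Rsh\U_1)\Rsh\T_2'$ and $(\U_1\Rsh\T_1)\Rsh\T_2'$ are the endpoints of the intervals over which $\mu$ is summed, while the Tamari cover relation --- cutting the leftmost subtree of a vertex and regrafting it on the parent --- is exactly the move that converts ``grafting on a branch'', implicit in the $C$-recursion, into ``grafting on a node'', visible in the $X$-basis. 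The terms $\U_1\Rsh(\T_1\droit\T_2')$ and $(\T_1\droit\U_1)\Rsh\T_2'$ feed, by induction, the graftings interior to $\T_2'$ and $\U_1$; the alternating Möbius signs are what turn these telescoping interval sums into single basis vectors, and the two subtracted Butcher products correct the overcounting of the slots adjacent to the root.

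Once the inductive identity $X_{\T_1}\triangleright X_{\T_2}=\sum_{\T\in G(\T_1,\T_2)}X_\T$ is established, bilinearity extends it to all of $\CC$ and the theorem follows. As a guard against an off-by-one in the Möbius bookkeeping --- the most likely source of error --- I would check the formula directly against the size-three and size-four cases, for which the change of basis \eqref{eq:basex} and the Tamari orders are displayed in Figures~\ref{tam3} and~\ref{tam4}, confirming that the slots created at the root and those created inside the subtrees are each counted once and do not overlap.
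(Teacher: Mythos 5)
Your overall plan --- transport the $\c$-basis recursion of Proposition~\ref{prop:prel} across the change of basis \eqref{eq:basex} --- is the right equivalence to aim for, but as written the proof has a genuine gap exactly where you flag it: the ``bookkeeping of the double passage through M\"obius inversion'' is described in words (``the alternating M\"obius signs are what turn these telescoping interval sums into single basis vectors'') but never carried out. This is not a routine verification. The M\"obius function of the Tamari lattice is not simply $\pm 1$ on all intervals, the recursion \eqref{eq-prelie-nus} is itself recursive in its right-hand side (it re-invokes $\T_1\droit\U_1$ and $\T_1\droit\T_2'$), and after expanding both $X_{\T_1}$ and $X_{\T_2}$ through \eqref{eq:basex} you are summing $\mu(u,\T_1)\mu(v,\T_2)$ over pairs and then re-inverting; nothing in the proposal controls which signed terms survive. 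A correct proof must replace that paragraph by an actual identity, and the hand-waving about ``telescoping'' and ``correcting the overcounting of the slots adjacent to the root'' does not supply one.

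The paper sidesteps this difficulty by running the argument in the opposite, sign-free direction. It \emph{defines} the grafting product $\btr$ on the $\P$-basis of $\PBT$ (which corresponds to the $X$-basis under $\psi$) and then computes what $\btr$ does on the $\H$-basis (which corresponds to the $\c$-basis): since $\H_t=\sum_{u\le t}\P_u$ is a plain sum over a Tamari lower set, no M\"obius function ever appears. Concretely, the quasi-derivation identity \eqref{eq:quasider} is proved by matching grafting patterns against the interval description of $\P_{\T_1}\P_{\T_2}$, the key Lemma \eqref{eq:HT0Ti} evaluates $\H_{\T_0}\btr\H_{B(\T')}$ by identifying which lower sets the grafted trees fill out, and combining the two shows that $\btr$ on the $\H$-basis satisfies exactly the recursion \eqref{eq-prelie-nus} that characterizes $\triangleright$ on the $\c$-basis; induction on the sizes of the trees then forces $\btr=\triangleright$. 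If you want to salvage your draft, the fix is to adopt this direction: do not invert, but instead verify that the node-grafting product, summed over Tamari lower sets, reproduces the $\c$-recursion --- that is precisely the content of \eqref{eq:quasider} and \eqref{eq:HT0Ti}, and it is where the real combinatorial work of the theorem lives.
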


For example,
\begin{align}
X_{\arbuga}\triangleright X_{\arbuga}
  &= X_{\arbtgb} + X_{\arbtgd} + X_{\arbtge}\\
X_{\arbdgb} \triangleright X_{\arbdga}
 &=  X_{\arbcga} + X_{\arbcgb} + X_{\arbcgc} + X_{\arbcgd} + X_{\arbcge} 
\end{align}

To prove this Theorem, we shall show  that
Equations~\eqref{prelie-x} and~\eqref{eq-prelie-nus} are equivalent.
To this aim, we define a new product $\btr$ 
by the condition that it satisfies~\eqref{prelie-x} and then show that it also
satisfies~\eqref{eq-prelie-nus}.
This is done in the following section.

\subsection{A preLie structure on $\PBT$}

Relation~\eqref{eq:basex} between the bases $x$ and $\c$ is the same
as the one between the natural basis $\P_t$ of $\PBT$ and the multiplicative
basis $\H_t$ defined in \cite[Eq. (46)]{HNT}.
We can therefore define a linear map
\begin{align}
\psi: &\PBT_{n-1}\longrightarrow \Lie(n)\nonumber\\
      & \P_t \longmapsto \x_t\\
      & \H_t \longmapsto \c_t\nonumber,
\end{align}
and define a preLie product on $\PBT$ by requiring that
\begin{equation}
\psi(\H_{t_1}\triangleright \H_{t_2}) = \c_{t_1}\triangleright \c_{t_2}.
\end{equation}
Theorem~\ref{th:xprel} can then be derived from a
compatibility property of the usual product of $\PBT$ with this preLie product.
Indexing as above the bases of $\PBT$ by plane trees instead of binary trees,
we define a new product $\blacktriangleright$ by
\begin{equation}
\P_\T\blacktriangleright \P_{\T'} = \sum_{\T''\in G(\T,\T')} \P_{\T''}.
\end{equation}

\begin{note}{\rm
We shall use the facts that $\P_\T\,\btr\,\P_\bullet =\P_{B(\T)}$ (by
definition since $B$ amounts to add a root on a sequence of trees)
and that $\H_\T\blacktriangleright\H_\bullet =\H_{B(\T)}$ since the trees
smaller than $B(\T)$ in the Tamari order are the $B(\T')$ with $\T'\leq \T$.
}
\end{note}

\begin{proposition}
For $U,V,W\in\PBT$,
\begin{equation}
\label{eq:quasider}
U\blacktriangleright (VW)
  = (U\blacktriangleright V)W 
  + V(U\blacktriangleright W)
  - V(U\blacktriangleright\bullet)W,
\end{equation}
where $\bullet =\P_\bullet=\H_\bullet$.
\end{proposition}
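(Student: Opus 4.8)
The plan is to prove the identity by multilinearity on the $\P$-basis, so that we may take $U=\P_{\T_U}$, $V=\P_{\T_V}$, $W=\P_{\T_W}$, and then compare the two sides as explicit multisets of plane trees, using the combinatorial description of the product recalled in Figure~\ref{prodt1t2} together with the defining formula $\P_\T\btr\P_{\T'}=\sum_{\T''\in G(\T,\T')}\P_{\T''}$. Write $V=B(\U_1\cdots\U_r)$, let $b_1,\dots,b_n$ be the vertices of the left branch of $W$ (with $b_1$ the root and $b_n$ the leftmost leaf), and recall that a term of $VW$ is obtained by distributing the subtrees $\U_1,\dots,\U_r$ along $b_1,\dots,b_n$, those assigned to $b_i$ being grafted to the left of $b_{i+1}$.

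The main structural observation I would isolate first is a cancellation that disposes of the correction term. Since $\P_{\T_U}\btr\P_\bullet=\P_{B(\T_U)}$, the element $U\btr\bullet=B(U)$ is a root carrying the single subtree $U$; by the product rule, $B(U)\,W$ is the sum, over the left-branch vertices $b_1,\dots,b_n$ of $W$, of the tree obtained by grafting $U$ as the leftmost child of $b_i$. On the other hand, the terms of $U\btr W$ that alter the left branch of $W$ are exactly those in which $U$ becomes a new leftmost child of some $b_i$, that is, precisely $B(U)W$. Hence, by associativity and bilinearity of the product, $V(U\btr W)-V(U\btr\bullet)W=V\bigl[(U\btr W)-B(U)W\bigr]$, where $(U\btr W)-B(U)W$ is the sum of the graftings of $U$ onto $W$ that \emph{preserve} the left branch $b_1,\dots,b_n$. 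The identity to be proved therefore reduces to
\[
U\btr(VW)=(U\btr V)W+V\bigl[(U\btr W)-B(U)W\bigr].
\]

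Next I would compute the left-hand side by classifying, in each term $\T''$ of $VW$, the node $v$ onto which $U$ is grafted, together with the chosen gap among the children of $v$. Such a $v$ is either a node of some subtree $\U_j$ or a node of $W$; in the latter case, if $v=b_i$ lies on the left branch, its children in $\T''$ consist of the inserted subtrees $F_1^{(i)}$ (all to the left of $b_{i+1}$), then $b_{i+1}$, then the remaining children of $b_i$ in $W$. I would split the grafting positions into (a) $U$ grafted inside some $\U_j$, or at a gap of $b_i$ lying to the left of $b_{i+1}$; and (b) $U$ grafted at a $W$-node off the left branch, or at a gap of $b_i$ lying to the right of $b_{i+1}$. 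Because $U$ is always inserted as a pristine subtree, family~(b) never involves any $\U_j$ inside $U$, which is what makes the branch-preserving description exact.

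The final step is to match these two families bijectively with the two summands of the reduced right-hand side. For family~(a): grafting $U$ inside $\U_j$ reproduces $U\btr V$ at the non-root nodes of $V$, while grafting $U$ among the subtrees assigned to $b_i$ (to the left of $b_{i+1}$) corresponds to first inserting $U$ among $\U_1,\dots,\U_r$ as a new child of the root of $V$ — that is, $U\btr V$ at the root — and then distributing; together these give $(U\btr V)W$. For family~(b): grafting $U$ to the right of $b_{i+1}$, or off the branch, does not disturb the distribution of the $\U_j$, so these terms factor as $V$ times a left-branch-preserving grafting of $U$ onto $W$, giving $V\bigl[(U\btr W)-B(U)W\bigr]$. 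Each correspondence is a bijection at the level of the parametrising data (a distribution of the $\U_j$ together with a position for $U$), so multiplicities are respected automatically. The main obstacle I anticipate is the bookkeeping at the left-branch vertices $b_i$, where the inserted subtrees $F_1^{(i)}$ interleave with $b_{i+1}$: getting the left-of-$b_{i+1}$ versus right-of-$b_{i+1}$ partition of the gaps exactly right is what makes the root-versus-non-root split of $U\btr V$ line up with the distribution, and what identifies the branch-altering graftings as precisely those cancelled by the correction term.
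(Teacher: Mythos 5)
Your proposal is correct and takes essentially the same route as the paper: both reduce to the $\P$-basis, use the description of $\P_{\T_1}\P_{\T_2}$ as distributions of the subtrees of $\T_1$ along the left branch $b_1,\dots,b_n$ of $\T_2$, and match the graftings of $U$ case by case (inside a $\U_j$ or left of $b_{i+1}$ versus off the branch or right of $b_{i+1}$). The only organizational difference is that you first identify $V(U\btr\bullet)W$ with the branch-altering subsum of $V(U\btr W)$ — which is exactly the paper's $(d)=(d')$ cancellation — and then merge the paper's sets $(a)$ and $(b)$ into a single family matched with $(U\btr V)W$.
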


The idea behind this formula is fairly simple: the left hand-side consists in
gluing $U$ either on $W$ or on $V$. The first two terms amount to doing
essentially that, except that an extra term appears that should not be there,
where $V$ is glued on $U$, hence the corrective term with a minus sign.

\Proof
Since \eqref{eq:quasider} is linear in $U,V,W$, we can assume that
\begin{equation}
U = \P_{\T_0}, \quad V = \P_{\T_1}, \quad W = \P_{\T_2}.
\end{equation}
In that case, let $b_1,\dots,b_n$ be as above the vertices the leftmost
branch of $\T_2$. 
Then the first product $\P_{\T_0} \blacktriangleright (\P_{\T_1} \P_{\T_2})$
is obtained by summing over three disjoint sets of grafting patterns that we
will denote by $(a)$, $(b)$ and $(c)$.

Set $(a)$ consists of the grafting patterns where $\T_0$ is grafted on a $b_i$
and to the left of $b_{i+1}$.
Set $(b)$ consists of those where $\T_0$ is grafted on a vertex belonging to
$\T_1$.
Set $(c)$ consists of all other elements.
We also denote by $(d)$ the set of trees obtained in the product
$\P_{\T_1} (\P_{\T_0} \blacktriangleright \bullet) \P_{\T_2}$.

Let us now consider the other two terms.
The term $(\P_{\T_0} \blacktriangleright \P_{\T_1}) \P_{\T_2}$ also gives rise to two
cases: either $\T_0$ is grafted on the root of $\T_1$ (set $(a')$), 
or not (set $(b')$).
Finally, the term $\P_{\T_1} (\P_{\T_0} \blacktriangleright \P_{\T_2})$
splits into two cases: either $\T_0$ is grafted on a $b_i$ to the left of
$b_{i+1}$ (set $(d')$) or $\T_0$ is grafted somewhere else (set $(c')$).

Let us show that the sets labelled with the same letters coincide.

First, consider an element $\T$ of $(a')$. From the product formula on plane
trees, $\T$ is a tree obtained by grafting $\T_0$ on the root of $\T_1$, and
then grafting all the children of the root of this new tree, respecting their
order, on the $b_i$s of $\T_2$, to the left of $b_{i+1}$. Whether we put the
children of $\T_1$ on the left of the $b_i$s and then $\T_0$ on the same spots
or put them all together is irrelevant, so that sets $(a)$ and $(a')$ coincide.

Let us now prove that $(b)=(b')$.
Let $\T$ be in $(b')$. It has been obtained by grafting $\T_0$ somewhere on a
subtree $\U_i$ of $\T_1$ and then by grafting all subtrees of $\T_1$ on the $b_i$s
as usual. This is the same as first grafting all subtrees of $\T_1$ on
the $b_i$s and then grafting $\T_0$ on a subtree that was initially a subtree
of $\T_1$.
So $(b)=(b')$.

The equality $(c)=(c')$ is easy: the operations of grafting the $\U_i$s on the
$b_i$s to the left of $b_{i+1}$ and then grafting $\T_0$ not on that spots
clearly commute, which means precisely that $(c)=(c')$.

Finally, let us prove that $(d)=(d')$.
Observe that the product $\P_{\T_0}\blacktriangleright \P_{\T_2}$ decomposes into
two cases, depending on whether $\T_0$ is grafted on a $b_i$ left of $b_{i+1}$
or not. If it is the case (from which case $(d')$ is derived by multiplying by
$\P_{\T_1}$ on the left), it corresponds to a term of the product
$(\P_{\T_0}\blacktriangleright\bullet) \P_{\T_2}$ (from which case $(d)$ is
derived) also by multiplying by $\P_{\T_1}$ on the left. Hence $(d)=(d')$.

This proves that
\begin{equation}
\P_{\T_0}\blacktriangleright (\P_{\T_1}\P_{\T_2})
 + \P_{\T_1}(\P_{\T_0}\blacktriangleright\bullet)\P_{\T_2}
= (\P_{\T_0}\blacktriangleright \P_{\T_1})\P_{\T_2}
 + \P_{\T_2}(\P_{\T_0}\blacktriangleright \P_{\T_2})  
\end{equation}
which is equivalent to \eqref{eq:quasider}.
\qed

\begin{lemma}
Let $\T$ be a tree of the form $\T=B(\T')$ and $\T_0$ be an arbitrary tree.
Then,
\begin{equation}
\label{eq:HT0Ti}
\H_{\T_0}\blacktriangleright \H_\T
= \H_{B(\T_0\T')} + \H_{B(\T'\T_0)}
 - \H_{B(\T_0\Rsh \T')} - \H_{B(\T'\Rsh \T_0)}
 + \H_{B(\T_0\blacktriangleright \T')}.
\end{equation}
\end{lemma}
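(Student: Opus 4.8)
The plan is to evaluate $\H_{\T_0}\blacktriangleright\H_\T$ directly in the $\P$ basis and then fold the answer back onto the $\H$ basis. First I would expand both arguments through $\H_\Theta=\sum_{u\le\Theta}\P_u$. Since, by the Note, the trees below $B(\T')$ in the Tamari order are precisely the $B(u')$ with $u'\le\T'$, this turns the left-hand side into
\begin{equation*}
\H_{\T_0}\blacktriangleright\H_{B(\T')}=\sum_{a\le\T_0}\ \sum_{u'\le\T'}\P_a\blacktriangleright\P_{B(u')},
\end{equation*}
so everything reduces to a single grafting $\P_a\blacktriangleright\P_{B(u')}$, namely grafting $a$ on all nodes of $B(u')$ in all ways. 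Throughout I write $\Theta\mapsto\Theta\blacktriangleright\bullet$ for the ``add a root'' operator, which by the Note sends $\P_c\mapsto\P_{B(c)}$ and $\H_\Theta\mapsto\H_{B(\Theta)}$.

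The key step is to split this grafting according to where $a$ lands. The extra root of $B(u')$ has a single child (the root of $u'$), so grafting $a$ on it gives $\P_{B(a\,u')}$ on its left and $\P_{B(u'\,a)}$ on its right; grafting $a$ on any node of $u'$ itself reproduces, once the extra root is restored, the element $(\P_a\blacktriangleright\P_{u'})\blacktriangleright\bullet$. Thus
\begin{equation*}
\P_a\blacktriangleright\P_{B(u')}=\P_{B(a\,u')}+\P_{B(u'\,a)}+(\P_a\blacktriangleright\P_{u'})\blacktriangleright\bullet .
\end{equation*}
Summed over $a\le\T_0$ and $u'\le\T'$, the third family assembles into $(\H_{\T_0}\blacktriangleright\H_{\T'})\blacktriangleright\bullet=\H_{B(\T_0\blacktriangleright\T')}$, which is already the last term of \eqref{eq:HT0Ti}.

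To identify the first two families I would bring in the multiplicative structure of the $\H$ basis of $\PBT$ from \cite{HNT}: in the plane-tree indexing its product is forest concatenation under a common root, $\H_{B(f)}\H_{B(g)}=\H_{B(fg)}$, so in particular $\H_{B(\T_0)}\H_{B(\T')}=\H_{B(\T_0\T')}$ and $\H_{B(\T_0)}\H_{\T'}=\H_{\T_0\Rsh\T'}$. Expanding $\H_{B(\T_0)}\H_{B(\T')}$ in the $\P$ basis and using the product description of Figure~\ref{prodt1t2} in its elementary form $\P_{B(a)}\P_{B(u')}=\P_{B(a\,u')}+(\P_{B(a)}\P_{u'})\blacktriangleright\bullet$ (the single child $a$ is grafted along the left branch of $B(u')$; the topmost spot gives $B(a\,u')$, the others rebuild the add-a-root of the shorter product), and then summing and applying $\H_{B(\T_0)}\H_{\T'}=\H_{\T_0\Rsh\T'}$, I obtain
\begin{equation*}
\sum_{a\le\T_0}\sum_{u'\le\T'}\P_{B(a\,u')}=\H_{B(\T_0\T')}-\H_{B(\T_0\Rsh\T')}.
\end{equation*}
The symmetric computation starting from $\H_{B(\T')}\H_{B(\T_0)}$ gives $\sum_{a,u'}\P_{B(u'\,a)}=\H_{B(\T'\T_0)}-\H_{B(\T'\Rsh\T_0)}$, and adding the three contributions yields \eqref{eq:HT0Ti}.

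The main obstacle is not conceptual but a matter of bookkeeping: one must track the add-a-root operator $\,\cdot\,\blacktriangleright\bullet$ consistently and verify the two elementary identities for $\P_a\blacktriangleright\P_{B(u')}$ and $\P_{B(a)}\P_{B(u')}$ against the precise conventions for $\blacktriangleright$, for the associative product, and for $B$ and $\Rsh$ — in particular the left/right placement on the extra root that separates the two distinct Butcher terms. Checking the smallest case $\T_0=\T'=\bullet$, where \eqref{eq:HT0Ti} reads $\H_\bullet\blacktriangleright\H_{B(\bullet)}=2\,\H_{B(\bullet\bullet)}-\H_{B(B(\bullet))}$, pins down all these conventions.
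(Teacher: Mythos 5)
Your proof is correct and follows essentially the same route as the paper's: both expand everything in the $\P$ basis and split the grafting of $a\le\T_0$ onto $B(u')$ into three cases according to whether $a$ lands on the root to the left of $u'$, on the root to its right, or below it, the last family resumming to $\H_{B(\T_0\blacktriangleright \T')}$. The only cosmetic difference is that the paper identifies the first two families directly as differences of Tamari down-sets, whereas you derive the same identity $\sum_{a,u'}\P_{B(a\,u')}=\H_{B(\T_0\T')}-\H_{B(\T_0\Rsh \T')}$ from the multiplicativity of the $\H$ basis together with the elementary product expansion of $\P_{B(a)}\P_{B(u')}$.
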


\Proof
In the $\P$ basis,
\begin{equation}
\H_{\T_0}\blacktriangleright \H_\T=
\left(
\sum_{\T'_0\leq \T_0} \P_{\T'_0}\right) \blacktriangleright
\left( \sum_{\T''\leq \T'} \P_{B(\T'')}\right).
\end{equation}
Expanding the product, we find three kinds of terms:
either $\T'_0$ is grafted on the root of $B(\T'')$ to the left or to the right
of $\T''$ (two cases) or it is grafted below it.
This last case sums up to $\H_{B(\T_0\btr \T')}$.
The other two cases are identical up to exchanging the roles of $\T_0$ and
$\T'$.
Let us deal with the first case. All trees appearing by grafting a
$\T'_0\leq \T_0$ to the left of the root of a $B(\T'')$ with $\T''\leq \T'$,
are all smaller in the Tamari order than $B(\T_0\T')$. Conversely, all trees
smaller than $B(\T_0\T')$ belong to the first case, except those where $\T'_0$
was grafted below the root. But  these trees  are precisely the elements
smaller in the Tamari order than $B(\T_0\Rsh \T')$.
So the terms belonging to the first case  sum up to
$\H_{B(\T_0\T')} - \H_{B(\T_0\Rsh \T')}$.

Analogously, the terms of the second case sum up to
$\H_{B(\T'\T_0)} - \H_{B(\T'\Rsh \T_0)}$.
Adding the contributions of the three cases, we obtain
Formula~\eqref{eq:HT0Ti}.
\qed

\Proof[of Theorem~\ref{th:xprel}]
We are now is a position to prove Theorem~\ref{th:xprel} by showing
that $\btr$ and $\triangleright$ coincide.

We want to compute a generic product $\H_{\T_1} \btr \H_\T$.
Since the $\H$ basis in multiplicative, we can replace $\H_\T$ by
$\H_{\T_2}\H_{\T_3}$ where $\T_2$ has a single tree attached: $T_2=B(\T'_2)$.

Now let us apply Formula~\eqref{eq:quasider} on the $\H$ basis:
\begin{equation}
\H_{\T_1} \btr (\H_{\T_2} \H_{\T_3})
= (\H_{\T_1} \btr \H_{\T_2}) \H_{\T_3} 
  + \H_{\T_2} (\H_{\T_1}\btr \H_{\T_3})
  - \H_{ \T_2} (\H_{\T_1}\blacktriangleright\bullet) \H_{\T_3}.
\end{equation}
By~\eqref{eq:HT0Ti}, the first term can be rewritten as
\begin{equation}
\left(\H_{B(\T_1\T'_2)} + \H_{B(\T'_2\T_1)}
 - \H_{B(\T_1\Rsh \T'_2)} - \H_{B(\T'_2\Rsh \T_1)}
 + \H_{B(\T_1\blacktriangleright \T'_2)} \right) \H_{\T_3},
\end{equation}
and now its second term $\H_{B(\T'_2\T_1)}\H_{\T_3}$ cancels with
$\H_{\T_2} (\H_{\T_1}\blacktriangleright\bullet) \H_{\T_3}$, 
so that
\begin{equation}
\begin{split}
\H_{\T_1} \btr (\H_{\T_2} \H_{\T_3})
&= 
\H_{B(\T_1\T'_2)} \H_{\T_3}
 + \H_{B(\T_1\btr \T'_2)} \H_{\T_3}
 + \H_{\T_2} (\H_{\T_1}\btr \H_{\T_3})\\
&
 - \H_{B(\T_1\Rsh \T'_2)} \H_{\T_3}
 - \H_{B(\T'_2\Rsh \T_1)} \H_{\T_3},
\end{split}
\end{equation}
which is exactly \eqref{eq-prelie-nus} expressed on plane trees.
This proves that $\btr=\triangleright$ by induction on the sizes of the
trees.
\qed

\begin{note}{\rm
The preLie product in the $X$-basis coincides with that of the free brace
algebra on one generator, which is the linear span of all plane trees, endowed
with the brace product
\begin{equation}
\<\T_1\T_2\cdots \T_k, \T_{k+1}\> = \sum_{\T\in G(\T_1,\T_2,\ldots \T_k; \T_{k+1})}\T,
\end{equation}
where $G(\T_1,\T_2,\ldots \T_k; \T_{k+1})$ is the multiset of trees obtained by
grafting $\T_1,\ldots,\T_k$ in this order on all nodes of $\T_{k+1}$ in all
possible ways \cite{Foi3}.

This brace product does not coincide with the one induced by the dendriform
structure of $\FQSym$. Indeed, on can check that ${\CC}$ is not stable
for this one: $\<\<\bullet,\bullet\>\bullet,\bullet\>$ is not in $\Lie$.
However, both products induce the same preLie structure, as well as the
extended preLie products 
\begin{equation}\label{eq:og}
\{\T_1\T_2\cdots\T_r;\T\} = \sum_{\sigma\in\SG_r}\<\T_{\sigma(1)}\T_{\sigma(2)}\cdots\T_{\sigma(r)};\T\>
\end{equation}
defined by their symmetrized versions (the
Oudom-Guin construction \cite{OG}).
}
\end{note}


\subsection{The Catalan idempotent in the $X$-basis}

Let $\PL$ be the free preLie algebra on one generator, and let
$p_\tau$ be its Chapoton-Livernet basis \cite{CL}, indexed by rooted
trees. Recall that the preLie product in this basis is given by
\begin{equation}
p_{\tau_1}\triangleright p_{\tau_2} = \sum_{\tau\in g(\tau_1,\tau_2)}p_\tau,
\end{equation}
where $g(\tau_1,\tau_2)$ is the multiset of trees obtained by grafting the
root of $\tau_1$ on all nodes of $\tau_2$.

By definition, ${\CC}$ contains the free preLie algebra generated by
$X_\bullet=\G_1$.
As an easy consequence of Theorem \ref{th:xprel}, we can express its
Chapoton-Livernet basis in terms of the $X$-basis of ${\CC}$.
\begin{lemma}
For a rooted plane tree $\T$, denote by $\bar\T$ the underlying
non-plane rooted tree.
The map
\begin{equation}
\iota:\ p_\tau\longmapsto |{\rm Aut}(\tau)|\sum_{\bar \T=\tau}X_\T
\end{equation}
is an embedding of preLie algebras.
\end{lemma}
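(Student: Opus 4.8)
The plan is to verify that $\iota$ respects the preLie products on both sides, and that it is injective. The preLie product on the source, the free preLie algebra $\PL$, is given in the Chapoton--Livernet basis by grafting the root of $\tau_1$ on all nodes of $\tau_2$, summing over resulting (non-plane) rooted trees with multiplicity. The preLie product on the target, $\CC$, is computed in the $X$-basis by Theorem~\ref{th:xprel}: $X_{\T_1}\triangleright X_{\T_2}=\sum_{\T\in G(\T_1,\T_2)}X_\T$, where $G(\T_1,\T_2)$ grafts the \emph{plane} tree $\T_1$ on all nodes of the plane tree $\T_2$ in all possible ways. So the whole proof reduces to a bookkeeping identity comparing multiplicities of non-plane trees obtained from a grafting on the non-plane side with multiplicities of plane trees (summed over planar refinements) on the target side.

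First I would write out both sides of $\iota(p_{\tau_1}\triangleright p_{\tau_2})=\iota(p_{\tau_1})\triangleright\iota(p_{\tau_2})$ explicitly. The left-hand side is
\begin{equation}
\iota(p_{\tau_1}\triangleright p_{\tau_2})
 = \sum_{\tau\in g(\tau_1,\tau_2)}|{\rm Aut}(\tau)|\sum_{\bar\T=\tau}X_\T,
\end{equation}
while the right-hand side, using bilinearity and Theorem~\ref{th:xprel}, is
\begin{equation}
\iota(p_{\tau_1})\triangleright\iota(p_{\tau_2})
 = |{\rm Aut}(\tau_1)|\,|{\rm Aut}(\tau_2)|
   \sum_{\bar\T_1=\tau_1}\ \sum_{\bar\T_2=\tau_2}\ \sum_{\T\in G(\T_1,\T_2)}X_\T.
\end{equation}
The core of the argument is then to show that for each non-plane rooted tree $\tau$ and each plane representative $\T$ with $\bar\T=\tau$, the coefficient of $X_\T$ agrees. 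On the left this coefficient records the automorphism factor of the non-plane product tree together with the number of graftings in $g(\tau_1,\tau_2)$ producing $\tau$; on the right it counts, over all plane lifts $\T_1,\T_2$ of $\tau_1,\tau_2$, the number of ways to graft $\T_1$ onto a node of $\T_2$ so as to obtain the fixed plane tree $\T$. The expected mechanism is a standard orbit-counting (Burnside/orbit-stabilizer) identity: the automorphism-weighted sum over non-plane objects equals the plain sum over plane objects, because $|{\rm Aut}(\tau)|$ is exactly the size of the fiber of the forgetful map from plane to non-plane trees divided into orbits, and grafting plane trees at \emph{all} positions and lifts correctly averages out the choice of planar structure.

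The main obstacle will be this coefficient-matching: the grafting in $G(\T_1,\T_2)$ is at every node and inserts $\T_1$ in every possible left-to-right position among the existing children, whereas $g(\tau_1,\tau_2)$ grafts without any planar data. I would handle this by fixing a target plane tree $\T$ and a chosen grafting node $v$, and setting up a bijection between (i) pairs consisting of a plane lift $(\T_1,\T_2)$ together with a grafting of $\T_1$ at $v$ yielding $\T$, and (ii) the set of plane structures on $\tau$ compatible with a fixed grafting $\tau_1\to\tau_2$ at the image of $v$. Counting the latter via the orbit-stabilizer theorem introduces exactly the factors $|{\rm Aut}(\tau_1)|$, $|{\rm Aut}(\tau_2)|$, and $|{\rm Aut}(\tau)|$ appearing above. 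Once the preLie morphism property is established, injectivity is immediate: $\iota$ sends the Chapoton--Livernet basis $\{p_\tau\}$ to linearly independent combinations of distinct $X_\T$, since trees with distinct underlying non-plane shapes involve disjoint sets of plane trees $\T$, and these $X_\T$ are themselves linearly independent in $\CC$. Hence $\iota$ is an embedding of preLie algebras.
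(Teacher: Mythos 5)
Your proof is correct and follows the route the paper intends: the lemma is asserted there as an immediate consequence of Theorem~\ref{th:xprel} with no written argument, and your comparison of the coefficients of a fixed $X_\T$ — which reduces to the classical orbit-counting identity $|{\rm Aut}(\tau)|\,m_g(\tau)=|{\rm Aut}(\tau_1)|\,|{\rm Aut}(\tau_2)|\,n(\tau;\tau_1,\tau_2)$ relating vertex graftings of $\tau_1$ into $\tau_2$ that yield $\tau$ to edges of $\tau$ whose removal leaves the pair $(\tau_1,\tau_2)$ — is exactly the verification the paper omits. One aside in your write-up is imprecise (the fiber of the forgetful map over $\tau$ has $\prod_v(\deg v)!/|{\rm Aut}(\tau)|$ elements, not $|{\rm Aut}(\tau)|$), but this plays no role since you match coefficients of an individual plane tree $\T$ rather than summing over fibers.
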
 
\qed

In terms of the symmetrized brace product \eqref{eq:og},
\begin{equation}
p_{B(\tau_1\cdots\tau_k)}=\{p_{\tau_1}\cdots p_{\tau_k},X_\bullet\}.
\end{equation}
We shall therefore identify $\PL$ with its image under $\iota$.

As already mentioned, the primitive Lie algebra of $\NCSF$, being generated
by the $\Psi_n$, is a Lie subalgebra of $\PL$.
In \cite{Cha2}, Chapoton gives an expression of the Catalan idempotent
on the basis $p_\tau$: setting the unnecessary parameter $a$ to 1,
the coefficient of $p_\tau$ in $D^n_{1,b}$ is equal to the generating
polynomial of \emph{small closed flows} on $\tau$ by \emph{size}.

A small closed flow of size $k$ on $\tau$ is equivalent to the following data:
\begin{itemize}
\item a set $O$ of $k$ distinct vertices (outputs), which cannot be the root;
\item a multiset $I$ of $k$ vertices (inputs);
\item a perfect matching between $I$ and $O$ such that each pair $(i,o)$
determines a path from $i$ to $o$ directed towards the root.
\end{itemize}
Denoting by $\F'(\tau,k)$ the set of small flows of size $k$ on $\tau$
and by $d_\tau(b)$ the polynomial
\begin{equation}
d_\tau(b)= \sum_k\sum_{\phi\in\F'(\tau,k)}b^k,
\end{equation}
we have \cite{Cha2}
\begin{equation}
D^n_{1,b}=\sum_{|\tau|=n}d_\tau(b)p_\tau.
\end{equation}
The \emph{rate} of a vertex is the sum of the multiplicities of the inputs
minus the number of outputs in its subtree.

Define the canonical labelling $\can(\T)$ of the vertices of a plane tree $\T$
as the one which yields the identity permutation when the tree is traversed in
postfix order: the label of a node is the cardinality of its subtree plus the
number of nodes strictly to its left. For example,
{ \newcommand{\nodea}{\node[draw,circle] (a) {$12$}
;}\newcommand{\nodeb}{\node[draw,circle] (b) {$1$}
;}\newcommand{\nodec}{\node[draw,circle] (c) {$8$}
;}\newcommand{\noded}{\node[draw,circle] (d) {$6$}
;}\newcommand{\nodee}{\node[draw,circle] (e) {$5$}
;}\newcommand{\nodef}{\node[draw,circle] (f) {$2$}
;}\newcommand{\nodeg}{\node[draw,circle] (g) {$3$}
;}\newcommand{\nodeh}{\node[draw,circle] (h) {$4$}
;}\newcommand{\nodei}{\node[draw,circle] (i) {$7$}
;}\newcommand{\nodej}{\node[draw,circle] (j) {$10$}
;}\newcommand{\nodeba}{\node[draw,circle] (ba) {$9$}
;}\newcommand{\nodebb}{\node[draw,circle] (bb) {$11$}
;}

\begin{center}
\begin{tikzpicture}[auto]
\matrix[column sep=.3cm, row sep=.3cm,ampersand replacement=\&]{
         \&         \&         \&         \&         \&         \& \nodea  \&         \&         \\ 
 \nodeb  \&         \&         \&         \& \nodec  \&         \&         \& \nodej  \& \nodebb \\ 
         \&         \& \noded  \&         \&         \& \nodei  \&         \& \nodeba \&         \\ 
         \&         \& \nodee  \&         \&         \&         \&         \&         \&         \\ 
         \& \nodef  \& \nodeg  \& \nodeh  \&         \&         \&         \&         \&         \\
};

\path[thick, black] (e) edge (f) edge (g) edge (h)
	(d) edge (e)
	(c) edge (d) edge (i)
	(j) edge (ba)
	(a) edge (b) edge (c) edge (j) edge (bb);
\end{tikzpicture}
\end{center}
}

Extending the notion of flow to plane trees, we define the vector of a flow
$\phi$ on $\T$ as the tuple $V_\phi = (\phi(v_1),\ldots,\phi(v_n))$, where
$v_i$ are the vertices of $\T$ numbered by their canonical labelling.
The complete example of all small closed flows on plane trees on $4$ vertices can be found on
Figure~\ref{fig-flots}.

\begin{proposition}
If $\T'\ge \T$ in the Tamari order, the vector of any small closed flow
on $\T'$ is also the vector of a small closed flow on $\T$.
\end{proposition}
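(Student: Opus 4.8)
The plan is to reduce the statement to a single covering relation of the Tamari order, and then to analyse how the \emph{feasibility} of a flow vector changes under the corresponding local move on plane trees. Since the Tamari order is generated by its covers, it suffices to treat the case in which $\T'$ covers $\T$. I would use the description recalled above: $\T'$ is obtained from $\T$ by detaching the leftmost subtree $S$, rooted at a vertex $c$, of some internal non-root vertex $x$, and regrafting $S$ onto the parent $p$ of $x$, immediately to the left of $x$. Throughout, for a vertex $v$ I write $\phi(v)$ for the local excess of inputs over outputs at $v$, so that the rate of $v$ is the subtree sum $\rho(v)=\sum_{u\in\mathrm{subtree}(v)}\phi(u)$; the tuple $V_\phi$ records only the local quantities $\phi(v)$ and is therefore insensitive to the tree shape, whereas the existence of a valid matching is not.

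First I would establish a feasibility criterion: a vector $\phi=(\phi(v))_v$ is the vector of a small closed flow on a plane tree $\tau$ if and only if
\begin{equation*}
\phi(v)\ge -1 \text{ for all } v,\qquad \phi(\mathrm{root})\ge 0,\qquad \rho_\tau(\mathrm{root})=\textstyle\sum_v\phi(v)=0,\qquad \rho_\tau(v)\ge 0 \text{ for all } v.
\end{equation*}
Necessity is immediate, since $\rho_\tau(v)$ equals the number of matched pairs crossing the edge above $v$, while $\phi(v)\ge-1$ reflects that each vertex carries at most one output and the root none. Sufficiency is a standard flow/Hall argument on the tree: the inequalities $\rho_\tau(v)\ge 0$ guarantee that the non-negative upward flow with divergence $\phi$ decomposes into directed input-to-output paths, which is exactly a valid matching. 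The crucial feature is that the first three conditions are \emph{tree-independent} (the root is the vertex of largest canonical label in both trees), so the only tree-dependent condition is the family $\rho_\tau(v)\ge 0$.

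Next I would record the two structural observations driving the proof. The first is that the covering move \emph{leaves the canonical (postfix) labelling unchanged}: in both $\T$ and $\T'$ the vertices of $S$, then those of the remaining children of $x$, then $x$ itself, are visited consecutively in exactly this order, so $\T$ and $\T'$ have literally the same labelled vertex set and the vectors $V_\phi$ on the two trees are compared coordinatewise on the nose. The second is that the move alters exactly one subtree: every ancestor of $x$ still contains $S$ and every other vertex keeps its descendants, so the unique vertex whose descendant set changes is $x$, for which
\begin{equation*}
\mathrm{subtree}_{\T}(x)=\mathrm{subtree}_{\T'}(x)\ \sqcup\ \mathrm{subtree}(c),
\end{equation*}
a disjoint union (here $\mathrm{subtree}(c)=S$ is the same in both trees).

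Finally I would combine these. Let $\phi$ be the vector of a small closed flow on $\T'$. The conditions $\phi(v)\ge -1$, $\phi(\mathrm{root})\ge 0$ and $\sum_v\phi(v)=0$ hold verbatim for $\T$, and the inequalities $\rho(v)\ge 0$ coincide for every vertex except $x$. From feasibility on $\T'$ we have in particular $\rho_{\T'}(x)\ge 0$ and $\rho(c)\ge 0$, whence by the displayed decomposition
\begin{equation*}
\rho_{\T}(x)=\rho_{\T'}(x)+\rho(c)\ge 0 .
\end{equation*}
Thus the last remaining inequality holds for $\T$ as well, so $\phi$ satisfies the feasibility criterion on $\T$ and is the vector of a small closed flow on $\T$ (realized, in general, by a different matching). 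The whole argument is short once the two structural observations are in place; accordingly the real work, and the step most likely to hide a subtlety, is the feasibility criterion itself (the Hall-type matching lemma and its converse), together with a careful verification that the postfix labelling is genuinely preserved by the covering move, including the boundary case where $p$ is the root.
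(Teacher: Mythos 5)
Your proof is correct and follows essentially the same route as the paper's: reduce to a single Tamari cover, check that the canonical postfix labelling is unchanged, and observe that the only vertex whose subtree changes is the one the subtree was cut from, for which $\rho_{\T}(x)=\rho_{\T'}(x)+\rho(c)\ge 0$. The only difference is that you state and justify explicitly the Hall-type feasibility criterion (non-negativity of all rates, together with the tree-independent conditions, characterizes realizable flow vectors), a point the paper passes over with ``$v$ is the vector of a flow on $T$, which is obviously small and closed.''
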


For example, on  Figure~\ref{fig-flots}, one can check that there
are five different flow vectors on plane trees of size 4 and that they all are flow vectors of the minimal
plane tree, the chain.

\Proof Let $v$ be the vector of a small closed flow on $\T'$.
If $\T'\ge \T$ are canonically labelled, and if $\T'$ covers $\T$,
the description of the Tamari order on plane trees shows that the subtree of a
node labelled $x$ in $\T'$, contains all the vertices of the subtree of the
node labelled $x$ in $\T$. 
Indeed, $\T'$ is obtained from $\T$ by cutting the subtree $\U$ of a vertex
$u$, and grafting it again on the left of its parent. The labels of the
vertices of $\U$, being equal to the cardinality of their subtree plus the
number of vertices to their left, remain unchanged by this operation, and the
other labels remain the same as well.
For example, the following tree covers the above one. The subtree of 5 has
been cut and grafted back on 8 without changing its labelling:
{ \newcommand{\nodea}{\node[draw,circle] (a) {$12$}
;}\newcommand{\nodeb}{\node[draw,circle] (b) {$1$}
;}\newcommand{\nodec}{\node[draw,circle] (c) {$8$}
;}\newcommand{\noded}{\node[draw,circle] (d) {$5$}
;}\newcommand{\nodee}{\node[draw,circle] (e) {$2$}
;}\newcommand{\nodef}{\node[draw,circle] (f) {$3$}
;}\newcommand{\nodeg}{\node[draw,circle] (g) {$4$}
;}\newcommand{\nodeh}{\node[draw,circle] (h) {$6$}
;}\newcommand{\nodei}{\node[draw,circle] (i) {$7$}
;}\newcommand{\nodej}{\node[draw,circle] (j) {$10$}
;}\newcommand{\nodeba}{\node[draw,circle] (ba) {$9$}
;}\newcommand{\nodebb}{\node[draw,circle] (bb) {$11$}
;}
\begin{center}
\begin{tikzpicture}[auto]
\matrix[column sep=.3cm, row sep=.3cm,ampersand replacement=\&]{
         \&         \&         \&         \&         \&         \& \nodea  \&         \&         \\ 
 \nodeb  \&         \&         \&         \& \nodec  \&         \&         \& \nodej  \& \nodebb \\ 
         \&         \& \noded  \&         \& \nodeh  \& \nodei  \&         \& \nodeba \&         \\ 
         \& \nodee  \& \nodef  \& \nodeg  \&         \&         \&         \&         \&         \\
};

\path[thick, black] (d) edge (e) edge (f) edge (g)
	(c) edge (d) edge (h) edge (i)
	(j) edge (ba)
	(a) edge (b) edge (c) edge (j) edge (bb);
\end{tikzpicture}
\end{center}
}

Thus, the rate of $x$ in $\T$ is at least the rate of $x$ in $\T'$, so that it
is in particular non-negative.
Hence, $v$ is the vector of a flow on $T$, which is obviously small and
closed.
\qed

Conversely, if a perfect matching between $I$ and $O$ never go through a left
edge of a tree $\T$, then the same values on the same elements give rise to
another perfect matching on the tree $\T'$ (a covering tree of $\T$) obtained
by cutting this edge and gluing it on the node immediately above.

\medskip
Define the maximal small closed flow $\phi_0(\T)$ by choosing as outputs all
the internal vertices except the root, and the leftmost leaf of each subtree
as inputs with the maximal allowed value. This is a flow of maximal size,
whose vector is lexicographically maximal among all flows of $T$.

For example, the maximal flow on the first tree above is 
\begin{equation}
[0, 3, 0, 0, -1, -1, 0, -1, 1, -1, 0, 0],
\end{equation}
and the maximal flow of the second one is 
\begin{equation}
[0, 2, 0, 0, -1, 0, 0, -1, 1, -1, 0, 0].
\end{equation}
One easily checks that both flows are flows of the first tree but not of the
second one (the leaf $6$ has a $-1$ as value).

Let $k_0(\T)$ be the size of $\phi_0(\T)$.

\begin{proposition}
Given a plane tree $\T$, there is a bijection between all small closed flow on
$\T$ and the set of maximal flows on all trees $\T'\ge \T$.
The flow vector is preserved in the bijection.
\end{proposition}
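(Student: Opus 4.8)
\Proof[proposed approach]
The two statements immediately preceding this one are exactly the two halves of the bijection, and the plan is to show that they are mutually inverse once one lands on maximal flows. Throughout I identify a small closed flow with its vector $V_\phi=(\phi(v_1),\dots,\phi(v_n))$ of net injections: being a small closed flow is the purely numerical requirement that $\phi(v)\ge -1$ at every vertex, that $\phi(\mathrm{root})\ge 0$, and that every partial sum $\mathrm{rate}(v)=\sum_{w\in\mathrm{sub}(v)}\phi(w)$ is $\ge 0$ with $\mathrm{rate}(\mathrm{root})=0$. Since the flow is directed towards the root, $\mathrm{rate}(v)$ is the total amount crossing the edge above $v$, and nonnegativity of all these rates already guarantees a compatible matching of $I$ with $O$; the existence of the matching is therefore automatic and the vector is the datum that matters. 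With this convention the claim becomes that $F\colon \T'\mapsto V_{\phi_0(\T')}$, which sends $\T'\ge\T$ to the vector of its maximal flow and which lands among the small closed flows of $\T$ by the previous Proposition, is a bijection onto the set of small closed flows on $\T$.

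Next I would construct the inverse $G$ by \emph{lifting}. Given a small closed flow $\psi$ on $\T$, the converse observation lets me rotate upward any rotatable left edge that carries no flow, that is, the edge below any leftmost child $c$ with $\mathrm{rate}(c)=0$; this produces a cover $\T_1\gtrdot\T$ on which $\psi$ is still a small closed flow with the same vector. Iterating, I obtain a chain $\T=\T_0\lessdot\T_1\lessdot\cdots$, necessarily terminating because each step strictly increases the tree in the finite Tamari order, at a tree $\T'$ on which $\psi$ uses \emph{every} rotatable left edge; I set $G(\psi)=\T'$. The heart of the argument is the equivalence
\begin{equation}
\psi\ \text{saturates every rotatable left edge of }S\ \Longleftrightarrow\ \psi=\phi_0(S),
\end{equation}
identifying the non-liftable flows on a tree $S$ with its maximal flow. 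Granting it, the terminal flow is $\phi_0(\T')$, so $F(G(\psi))=\psi$; conversely, starting from $\T'\ge\T$, the restriction $F(\T')=\phi_0(\T')$ already saturates the left edges of $\T'$ and so cannot be lifted past it, whence $G(F(\T'))=\T'$. Thus $F$ and $G$ are mutually inverse and the flow vector is preserved by construction.

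The left-to-right implication in the displayed equivalence is the main obstacle: I must show that a small closed flow saturating all rotatable left edges of $S$ is forced to carry $-1$ on every internal non-root vertex and the maximal admissible value on the leftmost leaf of every subtree, i.e. coincides with $\phi_0(S)$. I expect to prove this by induction on $S$, peeling off the leftmost branch and using that saturation of the bottom-most left edge pins down the input multiplicity of the leftmost leaf; the reverse implication is the easy remark that in $\phi_0(S)$ every leftmost child already carries positive rate. A secondary point requiring care is the well-definedness of $G$: the terminal tree must be independent of the order in which unused left edges are rotated. I would obtain this confluence by a local-commutation (diamond) argument combined with termination via Newman's lemma, so that equivalently the set $\{S\ge\T:\ V_\psi\ \text{is a flow on }S\}$ has a unique maximal element. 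As a consistency check, the injectivity of $F$ also follows directly from the canonical labelling, since $\phi_0(S)$ records $-1$ at each internal non-root vertex and, at each leaf, the number of its ancestors reaching it along a leftmost branch, data from which the leftmost branches of $S$, hence $S$ itself, are reconstructed.
\qed
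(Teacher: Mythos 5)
Your overall architecture matches the paper's (injectivity by reconstructing $\T'$ from the vector of its maximal flow; surjectivity by lifting a non-maximal flow to a strictly larger tree and iterating in the finite Tamari order), but the lifting criterion you allow yourself is too weak, and the ``key equivalence'' on which the whole argument rests is false. Take for $S$ the chain on five vertices $r$--$v_1$--$v_2$--$v_3$--$\ell$ ($r$ the root, $\ell$ the leaf) and the small closed flow $\psi$ with $\psi(v_1)=\psi(v_3)=-1$, $\psi(v_2)=\psi(r)=0$, $\psi(\ell)=2$ (outputs at $v_1,v_3$, two inputs at $\ell$). The rates of $\ell$, $v_3$, $v_2$ are $2$, $1$, $1$, so every rotatable left edge carries flow: your map $G$ halts at the chain itself. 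Yet $\psi$ is not the maximal flow of the chain, which is $(-1,-1,-1,3)$ on $(v_1,v_2,v_3,\ell)$; so $F(G(\psi))\neq\psi$. In fact $\psi=\phi_0(\T')$ for the cover $\T'>S$ obtained by rotating the subtree of $v_3$ up to become the left sibling of $v_2$ under $v_1$ --- a rotation along an edge of \emph{positive} rate, which your moves never perform. Since the equivalence ``saturates every rotatable left edge $\Leftrightarrow$ maximal'' is false, the induction you propose for its hard direction cannot succeed, and the construction of $G$ collapses.

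The missing idea is that the leftmost subtree of an internal non-root vertex $x$, rooted at $c$, can be rotated up whenever $\mathrm{rate}(x)\ge \mathrm{rate}(c)$, not only when $\mathrm{rate}(c)=0$: the move changes only $\mathrm{rate}(x)$, which drops by $\mathrm{rate}(c)$, and since $\mathrm{rate}(x)=\psi(x)+\mathrm{rate}(c)+(\text{rates of the other children of }x)$ the condition is automatic as soon as $\psi(x)\ge 0$. This is exactly how the paper's proof proceeds: if some internal non-root vertex carries a value $\neq -1$, rotate its leftmost subtree up; if all of them carry $-1$ but some leaf falls short of its maximal value, a second, more delicate case finds the lowest ancestor receiving flow from another child and rotates there. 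The ``converse'' remark about matchings avoiding a left edge --- the only lifting device you invoke --- is merely the easy special case $\mathrm{rate}(c)=0$ and does not suffice. By contrast, your worry about confluence of $G$ is a non-issue: once injectivity (reconstruction of the tree from the maximal flow vector) and surjectivity (termination of \emph{some} lifting sequence at a maximal flow) are established, the bijection follows without knowing that the terminal tree is independent of the choices made.
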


\Proof
We have already seen that each flow vector of $\T'>\T$ is a flow vector of $\T$.
So in particular, each maximal flow vector of a $\T'>\T$ is a flow vector of
$\T$.

We need to prove that all maximal flow vectors are distinct and that each flow
vector of $\T$ is indeed a maximal flow vector of a $\T'>\T$.

First, the maximal flow vectors are distinct since one can rebuild a tree
given its maximal flow vector $v$: the nodes $i$ such that $v_i=-1$ are the internal
nodes, those with $v_i>0$ are the leftmost leaves of a non-root node, and
the nodes with $v_i=0$ are the other leaves and the root.
So, reading the flow vector from right to left, there cannot be any ambiguity
on where node $i$ should go, and since nodes are added from right to left, no
two flows can give rise to the same tree.

Finally, let $\phi$ be a flow  on a tree $\T$.
If it is its maximal flow the
the statement is a tautology, so we  assume that it is not maximal.
We will prove  that $\phi$ is a flow on a tree $\T'>\T$ and conclude by induction
on the maximal distance from $\T$ to the maximal element of the Tamari
lattice, the corolla.
Consider the maximal flow $\phi'$ of $\T$. There is a node that is nonzero in
$\phi'$ and that does not have the same value in $\phi$. If it is an internal node,
it has $0$ value in $\phi$ and since it is the leftmost child of its parent, we
can cut its connection to its father and graft it on its grand-parent, then
obtaining a flow on a tree $\T'>\T$. If there is no such internal node 
differing between $\phi$ and $\phi'$, there is a leaf that does not have its
maximal possible value in $\phi$. Then, since all the internal nodes above it
except the root have value $-1$, let $n$ be the bottom-most such node that has
another flow coming to it from below. Then cut the left child of $n$ and graft
it on the parent of $n$. In the resulting tree $\T'$, the flow is still valid.
Indeed, by definition of a flow, if all matchings between inputs and outputs
correspond to a path towards to the root, then it is valid. In our case, $n$
can be matched to this \emph{other source}, so that all elements matching to
the leaf are either above or below $n$ and hence remain valid matchings in
$\T'$.

So we have proven that any flow of a tree $\T$ is either its maximal flow or a
flow of a tree $\T'>\T$. Iterating this process with $\T'$, it will have
to stop since there are only a finite number of trees greater than a given
tree.
\qed

Note that $k_0(\T)$ is the number of non-root internal vertices so that
$k_0(\T)=i(\T)-1$, where $i(\T)$ is the number of internal vertices.
It is also the number of left egdes of the corresponding binary tree.
Thus, the polynomials $d_\T(b)$ enumerate Tamari intervals according to these
statistics:

\begin{proposition}
\begin{equation}
d_\T(b) := \sum_k\sum_{\phi\in \F'(\T,k)}b^k = \sum_{\T'\ge \T} b^{k_0(\T')}.
\end{equation}
\end{proposition}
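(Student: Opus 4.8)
The plan is to read off this identity as an immediate corollary of the preceding Proposition, which already supplies a bijection between the small closed flows on $\T$ and the maximal flows on the trees $\T'\ge\T$. First I would recall that, in that bijection, a flow $\phi$ on $\T$ is not altered at all: its set of outputs, its multiset of inputs, and the matching between them are kept fixed, while only the edges of the tree are rearranged by the successive cut-and-regraft moves, until $\phi$ becomes the maximal flow of some terminal tree $\T'\ge\T$. In particular the set of outputs is the same throughout, so the size of $\phi$ (its number of outputs, which is precisely the exponent of $b$ it contributes) equals the size of the maximal flow of the terminal $\T'$, namely $k_0(\T')$.

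Next I would reorganize the sum defining $d_\T(b)$ according to the target tree of the bijection. Since every $\T'\ge\T$ carries exactly one maximal flow $\phi_0(\T')$, of size $k_0(\T')$, the fibres of the bijection are singletons indexed by the trees $\T'\ge\T$, and grouping the flows on $\T$ by their image gives
\begin{equation}
d_\T(b)=\sum_k\sum_{\phi\in\F'(\T,k)}b^k
 =\sum_{\T'\ge\T}b^{k_0(\T')},
\end{equation}
which is the asserted formula.

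The only point that genuinely needs a sentence of care is that the bijection is size-preserving. The cleanest justification is the one just given: the size is the number of outputs, a datum carried unchanged by the regrafting moves, so it cannot vary along the bijection; this is of course consistent with the fact, recorded in the previous Proposition, that the whole flow vector $V_\phi$ is preserved. Since all the substantive combinatorics — the validity of the regrafted flow and the termination of the iteration — was already established there, no real obstacle remains here: the present statement is essentially a bookkeeping reformulation of that bijection as an equality of generating polynomials in $b$, with the additional observation (noted just above) that $k_0(\T')=i(\T')-1$ counts the non-root internal vertices, equivalently the left edges of the associated binary tree.
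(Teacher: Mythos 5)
Your proposal is correct and takes essentially the same route as the paper, which states this Proposition with no further argument precisely because it is the generating-function reformulation of the preceding bijection between small closed flows on $\T$ and maximal flows on the trees $\T'\ge\T$. Your added observation that the flow data (hence the size $k$) is carried unchanged through the regrafting, so that each flow of size $k$ on $\T$ lands on the unique maximal flow of some $\T'\ge\T$ with $k=k_0(\T')$, is exactly the bookkeeping the paper leaves implicit.
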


\qed

As a consequence, we recover Theorem \ref{catpbw}:
\begin{corollary}
\begin{equation}
D^n_{1,b} = \sum_{|t|=n-1}b^{l(t)}b_t.
\end{equation}
\end{corollary}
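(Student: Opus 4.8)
The plan is to read Chapoton's expansion in the $X$-basis of $\CC$ and then combine it with the preceding Proposition and the very definition of the $X$-basis. First I would record that, since $D^n_{1,b}$ lies in $\PL$, Chapoton's formula $D^n_{1,b}=\sum_{|\tau|=n}d_\tau(b)p_\tau$ can be pushed through the preLie embedding $\iota$ to yield an expansion on plane trees
\[
D^n_{1,b}=\sum_{|\T|=n}d_\T(b)\,X_\T ,
\]
where $d_\T(b)=\sum_k\sum_{\phi\in\F'(\T,k)}b^k$ is the plane small-closed-flow polynomial. The only delicate point here is that the factor $|{\rm Aut}(\tau)|$ occurring in $\iota(p_\tau)=|{\rm Aut}(\tau)|\sum_{\bar\T=\tau}X_\T$ must exactly absorb the symmetry (groupoid) normalization carried by Chapoton's coefficient, so that the coefficient of each individual $X_\T$ comes out to be the honest plane flow count $d_\T(b)$. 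This is where I expect essentially all of the bookkeeping to sit, and it can be cross-checked on small cases (for $n=3$ one gets coefficients $1+b$ on the chain and $1$ on the corolla).

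Granting this reading, the remainder is formal. I would substitute the Proposition $d_\T(b)=\sum_{\T'\ge\T}b^{k_0(\T')}$ and interchange the two summations:
\[
D^n_{1,b}=\sum_{\T}\Big(\sum_{\T'\ge\T}b^{k_0(\T')}\Big)X_\T
=\sum_{\T'}b^{k_0(\T')}\sum_{\T\le\T'}X_\T .
\]
By the defining relation of the $X$-basis in the Tamari order, $\c_t=\sum_{u\le t}\x_u$, equivalently $\sum_{\T\le\T'}X_\T=C_{\T'}$, the inner sum collapses and
\[
D^n_{1,b}=\sum_{\T'}b^{k_0(\T')}C_{\T'} .
\]

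Finally I would translate back to binary trees through the Knuth bijection $K$: writing $t'=K(\T')$ gives $C_{\T'}=\c_{t'}$, and since $k_0(\T')$ is the number of non-root internal vertices of $\T'$, which is the number $l(t')$ of left edges of $t'$, the last display reads $D^n_{1,b}=\sum_{|t'|=n-1}b^{l(t')}\c_{t'}$, which is the claim. The main obstacle is entirely contained in the first step, namely passing cleanly from the non-plane Chapoton formula to the plane $X$-basis while keeping the automorphism factors straight; once the Proposition on flows is available, the summation interchange and the plane/binary dictionary are routine.
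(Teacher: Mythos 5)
Your argument is exactly the one the paper intends (and leaves mostly implicit): transport Chapoton's expansion $D^n_{1,b}=\sum_\tau d_\tau(b)p_\tau$ through the embedding $\iota$ into the $X$-basis, substitute the proposition $d_\T(b)=\sum_{\T'\ge\T}b^{k_0(\T')}$, interchange sums so that $\c_t=\sum_{u\le t}\x_u$ collapses the inner sum, and identify $k_0(\T')$ with the number of left edges of the corresponding binary tree. The one point you flag about the $|{\rm Aut}(\tau)|$ normalization is indeed the only bookkeeping the paper glosses over, and your treatment of it is consistent with how the paper sets up the lemma on $\iota$ and the extension of flows to plane trees.
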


Thus, we have now two quite different explanations of the curious fact that
the Catalan idempotent is the weighted sum of the PBW basis. The first one
relies upon the functional equation \eqref{eq:funeq}, which involves the quadri-algebra
structure of $\FQSym$, and the second one, which is derived from the preLie expansion given
by Chapoton in terms of small closed flows. It would be interesting to investigate
in the same manner the PBW expansions of the new Lie idempotents
introduced in \cite[Prop. 6.8 and Conj. 6.10]{Cha2}.

\section{Lie and pre-Lie subalgebras of $\CC$}
\label{sec:subalg}

\subsection{Left equivalence}

As we have seen on the case of the Catalan idempotent, it is not easy to
decide whether an element of $\Lie(n)$ expressed on the PBW basis belongs
to the descent algebra $\Sym_n$. A similar question  arises with $\PBT$.
For example, an old conjecture by \'Ecalle, as reformulated by Chapoton, states
that $\PBT\cap\Lie$ is the free preLie algebra generated by $\G_1=\P_\bullet$.

The equivalences classes to be defined below arose from the following question:
in a generic linear combination  $g= \sum_t \alpha_t\c_t$ of the basis $\c_t$ of $\CC$, which trees
must have the same coefficient if one requires that $g\in\PBT$ or $g\in\NCSF$?

It turns out that the sums over equivalence classes
span Lie subalgebras of $\Lie$,
which are conjectured to contain respectively $\PBT\cap\Lie$ (for the
L-classses) and proved to contain $\Sym\cap\Lie$ (for the LR-classes). In both cases, the
bracket of these subalgebras admits a remarkable combinatorial description.

\subsubsection{An exchange rule}

Consider the following transformation on a binary tree $T$: choose a vertex
$v$ that can either be the root of $T$ or the right child of its parent.
Let $T_1$ and $T_2$ be the subtrees depicted below
\begin{equation}
\xymatrix@R=0.1cm@C=1mm{
 & {\GrTeXBox{v}}\arx1[dl]\arx1[dr]\\
 {\GrTeXBox{T_1}} & *{} & {\GrTeXBox{\bullet}\arx1[dl]} \\
 & {\GrTeXBox{T_2}} \\
}
\end{equation}
and define $T'=L_v(T)$ as the result of exchanging $T_1$ and $T_2$:
\begin{equation}
\label{rew-petitpaq}
\xymatrix@R=0.1cm@C=1mm{
 & {\GrTeXBox{v}}\arx1[dl]\arx1[dr]\\
 {\GrTeXBox{T_1}} & *{} & {\GrTeXBox{\bullet}\arx1[dl]} \\
 & {\GrTeXBox{T_2}} \\
}
\ \ \mapsto \ \
\xymatrix@R=0.1cm@C=1mm{
 & {\GrTeXBox{v}}\arx1[dl]\arx1[dr]\\
 {\GrTeXBox{T_2}} & *{} & {\GrTeXBox{\bullet}\arx1[dl]} \\
 & {\GrTeXBox{T_1}} \\
}
\end{equation}

Define an equivalence relation by $T\equiv_L T'$ iff $T'=L_v(T)$ for some
vertex $v$. The equivalence classes will be called L-classes.

Note that the numbers of left and right branches are constant in each
equivalence class. 

For example, here are the L-classes of (incomplete) trees of sizes
$3$ and $4$.
In size $3$, there are four L-classes for five binary trees so that
only one class contains two trees:
\begin{equation}
\Btacb \text{\ \ and\ } \Btcab.
\end{equation}

In size $4$, there are ten classes. Here are the three
non-trivial ones.
\begin{equation}
\label{ex-taille4}
\left\{ \arbm,\arbl,\arbk \right\}, \left\{ \arbi,\arbh \right\},
\left\{ \arbg,\arbe \right\}.
\end{equation}

\subsubsection{Combinatorial encoding of the L-classes}

The L-classes can be parametrized by certain bicolored plane trees. 
To see this,
let $C$ be an L-class and let $T\in C$.

Number its vertices in infix order and build two set partitions $L(T)$ and
$R(T)$ where $L(T)$ (resp. $R(T)$) consists of the sets of labels of nodes
along left (resp. right) branches.

The \emph{left} subsets will correspond to white nodes and the right ones to
black nodes.

Now build a bipartite plane tree, whose root is representing the
sequence of $R(T)$ containing the root of $T$ and such that all nodes below a
given node are the subsets of the other color (except the one encoding its own
parent) that have an intersection with the given node in increasing order of
the intersection.

For example, let us consider the trees of the L-class \eqref{ex-taille4}.
Their labelings are
\begin{equation}
\xymatrix@R=0.1cm@C=1mm{
 & {\GrTeXBox{2}}\arx1[dl]\arx1[dr]\\
 {1} & *{} & {\GrTeXBox{3}}\arx1[dr]\\
 *{} & *{} & *{} & {4} \\
}
\text{\ ,\ }
\xymatrix@R=0.1cm@C=1mm{
 & {\GrTeXBox{1}}\arx1[dr]\\
 *{} & *{} & {{\GrTeXBox{3}}\arx1[dl]}\arx1[dr] \\
 *{} & {2} & *{} & {4} \\
}
\text{\ and\ }
\xymatrix@R=0.1cm@C=1mm{
 & {\GrTeXBox{1}}\arx1[dr]\\
 *{} & *{} & {{\GrTeXBox{2}}\arx1[dr]} \\
 *{} & *{} & *{} & {{\GrTeXBox{4}}\arx1[dl]} \\
 *{} & *{} & {3}\\
}
\end{equation}
so that their corresponding bipartite trees are
\begin{equation}
\label{bipart-taille4}
\xymatrix@R=0.1cm@C=1mm{
 & {\GrTeXBox{\bullet}}\arx1[dl]\arx1[d]\arx1[dr]\\
 {\GrTeXBox{\circ}}\arx1[d] & {\GrTeXBox{\circ}} & {\GrTeXBox{\circ}}\\
 {\GrTeXBox{\bullet}} & *{} & *{} & *{} \\
}
\text{\ ,\ }
\xymatrix@R=0.1cm@C=1mm{
 & {\GrTeXBox{\bullet}}\arx1[dl]\arx1[d]\arx1[dr]\\
 {\GrTeXBox{\circ}} & {\GrTeXBox{\circ}}\arx1[d] & {\GrTeXBox{\circ}}\\
 *{} & {\GrTeXBox{\bullet}} & *{} & *{} \\
}
\text{\ and\ }
\xymatrix@R=0.1cm@C=1mm{
 & {\GrTeXBox{\bullet}}\arx1[dl]\arx1[d]\arx1[dr]\\
 {\GrTeXBox{\circ}} & {\GrTeXBox{\circ}} & {\GrTeXBox{\circ}}\arx1[d]\\
 *{} & *{} & {\GrTeXBox{\bullet}} \\
}
\end{equation}
As a more substantial example, consider the labeled  tree
\begin{equation}
\xymatrix@R=0.1cm@C=1mm{
 *{} & *{} & {\GrTeXBox{5}}\arx1[dl]\arx1[dr]\\
 *{} & {\GrTeXBox{2}}\arx1[dl]\arx1[dr] & *{} & {\GrTeXBox{6}}\arx1[dr] \\
 {\GrTeXBox{1}} & *{} & {\GrTeXBox{4}}\arx1[dl] & *{} &
    {\GrTeXBox{7}} \\
*{} & {3} \\
}
\end{equation}
for which
$L(T)=\{125,34,6,7\}$ and $R(T)=\{567,24,1,3\}$ so that the corresponding
bipartite tree is
\begin{equation}
\xymatrix@R=0.1cm@C=1mm{
 *{} & *{} & {\GrTeXBox{\bullet}}\arx1[dl]\arx1[d]\arx1[dr]\\
 *{} & {\GrTeXBox{\circ}}\arx1[dl]\arx1[dr] & {\GrTeXBox{\circ}}
     & {\GrTeXBox{\circ}} \\
 {\GrTeXBox{\bullet}} & *{} & {\GrTeXBox{\bullet}}\arx1[d] \\
 *{} & *{} & {\GrTeXBox{\circ}}\arx1[d] \\
 *{} & *{} & {\GrTeXBox{\bullet}} \\
}
\end{equation}

Note that in that case, the resulting bipartite tree is not the image of the
initial binary tree by the classical bijection between plane trees and binary
trees (Knuth rotation).
However, this operation is a bijection since the number of children of the
(black) root gives the length of the right branch starting from the root and
each subtree corresponds itself recursively to a binary tree.

Actually, this correspondence coincides with the twisted Knuth rotation
defined in \cite{BS}.

\begin{theorem}
The L-classes are indexed by bipartite trees with black roots up to reordering
of the children of the \emph{black} nodes.
\end{theorem}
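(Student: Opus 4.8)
The plan is to upgrade the encoding $T\mapsto\widehat T$ described just before the statement into a bijection between (incomplete) binary trees and plane bipartite trees with a black root, and then to read off the effect of the elementary move $L_v$ of \eqref{rew-petitpaq} on $\widehat T$. I expect $L_v$ to be exactly an adjacent transposition of two white children of a single black node of $\widehat T$. Granting this, the relation $\equiv_L$ corresponds under $T\mapsto\widehat T$ to arbitrary, independent reorderings of the white children of the black nodes (adjacent transpositions generating the symmetric group), which is precisely the claim that the L-classes are indexed by bipartite trees with black root up to reordering the children of the black nodes.

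First I would set up $T\mapsto\widehat T$ recursively, as the twisted Knuth rotation of \cite{BS} recalled in the excerpt. Writing $B=(v_0,\dots,v_m)$ for the right branch issued from the root $v_0$ of $T$ (so $v_{j+1}$ is the right child of $v_j$), the black root of $\widehat T$ encodes $B$, and its white children are the left branches meeting $B$, ordered by their meeting vertex, i.e.\ by the left subtrees hanging from $v_0,\dots,v_m$; each such white child is the recursive encoding of the corresponding left subtree, attached as a white subtree. Since the number $m$ is recovered as the number of children of the black root and each pendant subtree is recovered by the recursion, $T\mapsto\widehat T$ is injective, and it is surjective because reversing the construction builds a binary tree from any plane bipartite tree with black root.

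Next I would analyze $L_v$. The hypothesis that $v$ is the root or a right child of its parent means precisely that $v$ lies on a right branch $B$, has a successor $w$ on $B$ (its right child, the node $\bullet$ of \eqref{rew-petitpaq}), and that the left subtree $T_1$ of $v$ is itself a \emph{child} white node of the black node $B$ rather than the link to the parent of $B$; indeed, when $B$ is not the root branch its top vertex $v_0$ is a left child, whose left subtree belongs to the parent white branch, and this is exactly the vertex the condition on $v$ forbids. The subtrees $T_1$ and $T_2$ exchanged by $L_v$ are then the left subtrees of the consecutive branch vertices $v$ and $w$, hence two adjacent white children of $B$, each carried together with the white subtree below it; as $L_v$ fixes $B$ and every other branch, its effect on $\widehat T$ is to transpose these two adjacent children, nothing else changing. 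Conversely every such adjacent pair arises from a legal $v$, so all adjacent transpositions of the white children of each black node occur. Since these generate the full symmetric group on the children of each black node, and moves on distinct right branches act independently, the $\equiv_L$-classes are carried bijectively by $T\mapsto\widehat T$ onto the plane bipartite trees with black root taken up to reordering the children of the black nodes, which is the theorem.

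The main obstacle will be the bookkeeping forced by the infix labelling, since $L_v$ relabels the vertices and therefore alters the set partitions $L(T)$ and $R(T)$ from which $\widehat T$ is built. What has to be verified is that this relabelling changes only the sets decorating the nodes and not the shape of $\widehat T$ beyond the single transposition: concretely, that in $T'=L_v(T)$ the child-ordering ``by increasing intersection value'' still places the two swapped subtrees in exchanged positions while leaving every other child in place. This reduces to the observation that labels increase from top to bottom along any right branch (the root preceding its right subtree in infix order), so that the meeting vertices $v_0,\dots,v_m$ keep their relative order under the local swap and only the contents attached at $v$ and $w$ are interchanged; combined with the identification of the forbidden vertex $v_0$ with the parent link, this yields exactly the transposition of white children claimed above.
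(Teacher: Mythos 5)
Your proof is correct and follows essentially the same route as the paper, whose entire argument is the single observation that the exchange rule along a right edge amounts to exchanging the corresponding (white) subtrees of the black node encoding that right branch. You simply spell out the details the paper leaves implicit --- the bijectivity of the twisted Knuth rotation, the identification of the forbidden vertex with the parent link, the stability of the child ordering under relabelling, and the generation of the symmetric group by adjacent transpositions.
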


\Proof
The exchange rule between two binary trees along a right edge amounts to the
exchange of the corresponding subtrees of the black node corresponding to the
right branch containing this right edge. 
\qed

We shall need at some point a similar encoding, now with a white root. The
process is the same except that the root is the part of $L(T)$ that contains
the label of the root of $T$. These trees can be obtained from the
black-rooted trees by moving above the root its first (white-rooted) subtree.
Again with the same three bipartite trees, we get
\begin{equation}
\xymatrix@R=0.1cm@C=1mm{
 & {\GrTeXBox{\circ}}\arx1[dl]\arx1[dr]\\
 {\GrTeXBox{\bullet}} && {\GrTeXBox{\bullet}}\arx1[dl]\arx1[dr] \\
 & {\GrTeXBox{\circ}} && {\GrTeXBox{\circ}} \\
}
\text{\ ,\ }
\xymatrix@R=0.1cm@C=1mm{
 & {\GrTeXBox{\circ}}\arx1[d]\\
 & {\GrTeXBox{\bullet}}\arx1[dl]\arx1[dr] \\
 {\GrTeXBox{\circ}}\arx1[d] && {\GrTeXBox{\circ}} \\
 {\GrTeXBox{\bullet}} \\
}
\text{\ and\ }
\xymatrix@R=0.1cm@C=1mm{
 & {\GrTeXBox{\circ}}\arx1[d]\\
 & {\GrTeXBox{\bullet}}\arx1[dl]\arx1[dr] \\
 {\GrTeXBox{\circ}} && {\GrTeXBox{\circ}}\arx1[d] \\
 && {\GrTeXBox{\bullet}} \\
}
\end{equation}

Note that in the case of the white-rooted bipartite trees we find in general
several classes of bipartite trees up to reordering of the children of the
black nodes.

\subsubsection{The \'Ecalle-Chapoton conjecture}

The L-classes arose while investigating the following question: given a
generic linear combination $C$ of the $T(\sigma)$, regroup the $T(\sigma)$
that always have same coefficient if one requires that $C$ belong to $\PBT$.
They conjecturally are the $L$-classes.

\begin{conjecture}
Any Lie element of $\PBT$ is a linear combination of L-classes.
\end{conjecture}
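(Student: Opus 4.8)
The plan is to prove the inclusion $\PBT\cap\Lie\subseteq\langle\,\text{L-classes}\,\rangle$, where the right-hand side denotes the linear span of the L-class sums $\sum_{t\in C}\c_t$. I would reduce this to two statements: first, that the L-span is a preLie (hence Lie) subalgebra of $\CC$; and second, that $\PBT\cap\Lie$ is generated as a preLie algebra by the single element $X_\bullet=\P_\bullet=\G_1$. Granting both, the conclusion is immediate: $X_\bullet$ is trivially its own L-class, so if the L-span is preLie-closed and contains $X_\bullet$, it contains the whole free preLie algebra $\PL=\langle X_\bullet\rangle_{\triangleright}$, and the second statement identifies this with $\PBT\cap\Lie$.

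For the first statement I would work in the $X$-basis, where Theorem~\ref{th:xprel} gives the clean rule $X_{\T_1}\triangleright X_{\T_2}=\sum_{\T\in G(\T_1,\T_2)}X_\T$, grafting $\T_1$ on every node of $\T_2$. The task is to transport the L-class relation, which is defined on the $\c$-basis, through the triangular change of basis $\c_t=\sum_{u\le t}\x_u$ (Tamari order, with Möbius inverse) into a criterion on the $X$-coefficients, and then to check that grafting-on-all-nodes preserves that criterion. Here the bipartite-tree encoding is the crucial device: the L-classes are indexed by black-rooted bipartite trees up to reordering the children of the \emph{black} nodes, and in this picture grafting an argument tree amounts to inserting a new child into a black node's child-list, an operation manifestly invariant under the permitted reorderings. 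I expect the compatibility to follow by induction on degree once the $X$-basis criterion is made explicit, much as the Lie-subalgebra property of these classes is asserted combinatorially earlier in the paper.

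The second statement is exactly the \'Ecalle--Chapoton conjecture that $\PBT\cap\Lie=\PL$, and this is where the real difficulty lies. Since we lack an unconditional description of the primitive Lie elements of $\PBT$, I would attempt a direct route avoiding that conjecture: take a generic $g=\sum_t\alpha_t\c_t$ and impose simultaneously that $g$ be primitive (Ree's criterion in the $\K\SG$ form, orthogonality to the special shuffles $u\shuffle v$ of consecutive intervals) and that $g$ lie in $\PBT$ (a linear combination of the $\P_t$). Each condition translates into explicit linear relations among the $\alpha_t$, and the conjecture predicts that their conjunction is precisely L-constancy. The plan would be to prove this by induction on $n$, using the quadri-coalgebra coproducts $\Delta_\nwarrow,\dots,\Delta_\nearrow$ to control how primitivity propagates across deconcatenations and to match the resulting equalities with the exchange rule $L_v$.

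The main obstacle is exactly this last matching for all $n$. The low-degree computations that motivated the definition of the L-classes show the two families of relations coincide through size $4$, but an all-degrees proof appears to require a structural understanding of $\PBT\cap\Lie$ that is essentially equivalent to the \'Ecalle--Chapoton conjecture itself. My honest expectation is therefore that the preLie-closure of the L-span can be pushed through combinatorially and unconditionally, whereas the conjecture as a whole will stand or fall with \'Ecalle--Chapoton; the cleanest complete proof most plausibly proceeds by first settling $\PBT\cap\Lie=\PL$ and then invoking the reduction above.
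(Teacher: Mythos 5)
The statement you are addressing is presented in the paper as a \emph{conjecture}, and the paper offers no proof of it, so the question is whether your argument closes that gap; it does not. Your reduction is correct as far as it goes: the span $\SC$ of the L-classes is a sub-preLie algebra of $\CC$ (this is a theorem of the paper, proved there directly on the $\c$-basis via a bijection on triples $(T_1',T_1'',a)$ of grafting data rather than through the $X$-basis as you propose, but either way it is available), it contains $\G_1$, hence it contains the free preLie algebra $\PL$ generated by $\G_1$; and if the \'Ecalle--Chapoton conjecture $\PBT\cap\Lie=\PL$ were known, the inclusion $\PBT\cap\Lie\subseteq\SC$ would follow at once. This is exactly the scheme by which the paper \emph{does} prove the parallel statement for the descent algebra ($\Sym\cap\Lie$ is contained in the span of the LR-classes): there the generators $\Psi_n$ of the primitive Lie algebra of $\Sym$ are explicitly known, each is an element of the PBW basis alone in its LR-class, and the LR-span is closed under the bracket. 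For $\PBT$ the analogous structural input is precisely what is missing.

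The gap is therefore concentrated in your second statement. The \'Ecalle--Chapoton conjecture is open (the paper cites it only as ``an old conjecture by \'Ecalle, as reformulated by Chapoton''), and your proposed direct route --- imposing Ree's criterion together with membership in $\PBT$ on a generic $g=\sum_t\alpha_t\c_t$ and matching the resulting linear relations with constancy on L-classes --- is only a plan: that matching \emph{is} the content of the conjecture and is not carried out for general $n$. You acknowledge this honestly, but the net effect is that the proposal is a conditional reduction, not a proof. One further caution: the inclusion is strict (the paper notes that the number of L-classes in size $n$ exceeds $\dim(\PBT_n\cap\Lie(n))$, and at $n=5$ none of the five L-classes with one right branch lies in $\PBT$ individually, only a $3$-dimensional space of combinations of four of them does), so any strengthening that tried to show individual L-classes are Lie elements of $\PBT$ would fail outright; your formulation avoids that trap, but the direct route in your third paragraph would still have to produce the additional linear conditions cutting $\SC$ down to $\PBT\cap\Lie$, which is again the hard, unresolved part.
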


The number of L-classes in size $n$ is greater that the size of $\PBT_n\cap
\Lie(n)$ so that the conjecture provides only a necessary condition.

Here are some tables to clarify this point.
Define $a_{n,k}$ as the dimension of $\Lie(n)\cap \PBT_{n,k}$, where
$\PBT_{n,k}$ is the subspace of $\PBT_n$ spanned by the binary trees with $k$
right branches.

The $a_{n,k}$ are conjectured to be Sequence $A055277$, which is the
number of rooted trees with $n$ nodes (hence the dimension of the free preLie
algebra on one generator as conjectured by \'Ecalle and Chapoton) refined by
their number of leaves (parameter $k$).

\begin{figure}[ht]
\begin{equation}
\begin{array}{|c||c|c|c|c|c|c|c|}
\hline
n\backslash k
    & 0 & 1 & 2 &  3  &  4  &  5  & 6 \\
\hline
2   & 1 &   &   &     &     &     &   \\
\hline
3   & 1 & 1 &   &     &     &     &   \\
\hline
4   & 1 & 2 &  1&     &     &     &   \\
\hline
5   & 1 & 4 &  3&  1  &     &     &   \\
\hline
6   & 1 & 6 &  8&  4  &  1  &     &   \\
\hline
7   & 1 & 9 & 18& 14  &  5  &  1  &   \\
\hline
8   & 1 & 12& 35& 39  & 21  &  6  & 1 \\
\hline
\end{array}
\hskip.5cm
\begin{array}{c}
~ \\
A055277 \\  
\\  %
\\
\\
\end{array}
\end{equation}
\caption{\label{small-ank} The $a_{n,k}$.}
\end{figure}

Let now $b_{n,k}$ be the number of L-classes with $n-1$ nodes
and $k$ right branches.

\begin{figure}[ht]
\begin{equation}
\begin{array}{|c||c|c|c|c|c|c|c|}
\hline
n\backslash k
    & 0 & 1 & 2 &  3  &  4  &  5  & 6 \\
\hline
2   & 1 &   &   &     &     &     &   \\
\hline
3   & 1 & 1 &   &     &     &     &   \\
\hline
4   & 1 & 2 &  1&     &     &     &   \\
\hline
5   & 1 & 5 &  3&  1  &     &     &   \\
\hline
6   & 1 & 8 & 11&  4  &  1  &     &   \\
\hline
7   & 1 & 13& 30& 20  &  5  &  1  &   \\
\hline
8   & 1 & 18& 67& 73  & 31  &  6  & 1 \\
\hline
\end{array}
\end{equation}
\caption{\label{small-bnk} The $b_{n,k}$.}
\end{figure}

The first discrepance between $a_{n,k}$ and $b_{n,k}$ occurs at $n=5$.
Indeed, all five L-classes do not belong to $\PBT$.
The five L-classes with one right branch are (represented as incomplete binary
trees)
\begin{equation}
\left[ \arbg,\arbe \right],
\left[ \arbj \right],
\left[ \arbc \right],
\left[ \arbd \right],
\left[ \arbb \right],
\end{equation}

Let us denote these L-classes by $C_1,\dots,C_5$.
Expanding these as combinations of permutations in $\FQSym_5$, one finds that
only $C_5$ belongs to $\PBT_5$.
The linear span of the other ones has a 3-dimensional intersection with
$\PBT_5$. A linear basis of this intersection is given, \emph{e.g.}, by
\begin{equation}
C_1+C_4 \quad,\quad C_2-C_4 \quad,\quad C_3+C_4.
\end{equation}

Even if many properties of L-classes are conjectural, the following
result is already of interest.

\begin{theorem}
Let $\SC(n)$ be the linear subspace of $\Lie(n)$ (and therefore of the Catalan
subalgebra ${\CC}$)
spanned by the L-classes.
Then $\SC(n)$ is a sub pre-Lie algebra of ${\CC}$.\\

The pre-Lie product $s_1\droit s_2$ of two L-classes can be computed
as follows: 

let $B_1$ and $B_2$ be their respective representatives as
black-rooted bipartite trees and let $B'_1$ be the non-equivalent
representatives as white-rooted bipartite trees of $s_1$.
Then the product is obtained as the following sum:
\begin{itemize}
\item
with a minus sign, all trees obtained by gluing $B_1$ as a child of a white
node of $B_2$;
\item
with a plus sign, all trees obtained by gluing any element of $B'_1$ as a
child of a black node $B_2$.
\end{itemize}
The multiplicity of a given tree is equal to the number of ways to cut it into
two parts such that $s_2$ is the part containing the root of the tree and
$s_1$ is the other part.
\end{theorem}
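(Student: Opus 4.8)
The plan is to reduce the whole computation to the grafting description of the preLie product established in Theorem~\ref{th:xprel}, and then to read off the effect of a single graft on the bipartite encoding of an L-class. Since the announced formula already exhibits $s_1\triangleright s_2$ as an explicit integer combination of L-class sums, the first assertion (that $\SC(n)$ is a sub-preLie algebra of $\CC$) is an immediate corollary of the second; so the entire content lies in proving the product formula, and I would state and prove only that, deducing closure at the end.

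First I would record how an L-class sum looks in the grafting framework. Writing $s=\sum_{t\in C}\c_t$ for an L-class $C$, I would use the relation $\c_t=\sum_{u\le t}\x_u$ together with the dictionary $X_\T=\x_t$ for $\T=K^{-1}(t)$ to move freely between the $\c$-basis and the $X$-basis, in which the preLie product is pure grafting of plane trees. The combinatorial fact underlying the indexing theorem is that the exchange rule $L_v$ permutes precisely the left-subtrees hanging off a fixed right branch, that is, the children of a fixed \emph{black} node of the bipartite tree $B$. Thus an L-class is the orbit obtained by independently reordering the children of every black node of $B$, and its sum $s$ is the associated symmetrization in the brace structure. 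In the same language I would make precise that the white-rooted encoding of the same class splits into the finite set $B'$ of white-rooted bipartite trees obtained by lifting the leftmost white subtree above the root, so that $s$ is represented by a single black-rooted tree $B$ but by the whole family $B'$ in the white-rooted convention.

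Next I would compute $s_1\triangleright s_2=\sum_{t\in C_1,\,t'\in C_2}\c_t\triangleright\c_{t'}$ by grafting $s_1$ onto each node $v$ of $s_2$, using Theorem~\ref{th:xprel} (equivalently the recursion of Proposition~\ref{prop:prel} on naked trees). The decisive dichotomy is the colour of $v$ in the bipartite picture. When $v$ lies on a right branch, that is, is a \emph{black} node, the grafted copy of $s_1$ becomes a new left-subtree; it therefore enters as a child of a black node and must be recorded by a white-rooted representative, and summing over all plane realizations forces exactly the sum over all elements of $B'_1$, contributing with a plus sign. When $v$ is a \emph{white} node the grafted copy becomes a new right-subtree, entering as a child of a white node through the black-rooted representative $B_1$, and the root-level terms of the recursion in Proposition~\ref{prop:prel} supply the minus sign. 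Performing this colour analysis and collapsing the partially symmetrized sums of $X_\T$ back into L-class sums yields precisely the two families of terms in the statement, and in particular shows the result lies in $\SC(n)$.

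Finally I would check the multiplicities: in the grafting formula the coefficient of an output tree counts the grafting sites that produce it, and transported to the bipartite encoding this is exactly the number of ways of cutting the output tree into a root-part equal to $s_2$ and a complementary part equal to $s_1$, which is the asserted multiplicity. The main obstacle will be the bookkeeping of the previous paragraph: keeping the symmetrization over black-node children coherent when a graft lands deep inside a subtree, and proving that the partially symmetrized sums of $X_\T$ genuinely reassemble into L-class sums with the stated signs, with the plus terms (white-rooted grafts onto black nodes) and the minus terms (black-rooted grafts onto white nodes) exhausting the expansion and leaving no remainder. I expect the sign reconciliation to be the delicate point, since it ultimately traces back to the Moebius relation between the $\c$- and $X$-bases and to the lift relating the black-rooted tree $B$ to its family $B'$ of white-rooted representatives.
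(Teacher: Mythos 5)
Your route is genuinely different from the paper's, and it has a gap at exactly the point where the real work lies. The paper does not pass through the $X$-basis at all: it uses the unwound form of the recursion \eqref{eq-prelie-nus}, namely that $T'\droit T''$ is the signed sum over all ways of gluing $T'$ into the middle of a branch of $T''$ ($-1$ on a left branch, $+1$ on a right branch), and then proves constancy of coefficients on L-classes directly. Concretely, it encodes every occurrence of a tree $T_1$ in $C'\droit C''$ as a triple $(T_1',T_1'',a)$ with $T_1'\in C'$, $T_1''\in C''$ and $a$ the insertion branch, and exhibits a bijection between the triples of $T_1$ and those of any $T_2$ obtained from $T_1$ by a right-branch exchange (the only nontrivial case being when $a$ is itself the exchanged branch, where one uses that its parent branch must then be a right branch). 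That bijection is the entire proof of closure; the bipartite-tree description of the product and the multiplicity statement then just read off the branch-gluing picture.

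In your proposal this decisive step is deferred to ``bookkeeping'': you assert that the partially symmetrized sums of $X_\T$ ``genuinely reassemble into L-class sums'' but give no mechanism for why trees related by a right-branch exchange receive equal coefficients, which is the whole content of the theorem. Moreover, your starting point is shaky: an L-class sum $s=\sum_{t\in C}\c_t$ rewrites in the $X$-basis as $\sum_u \#\{t\in C:\, u\le t\}\,\x_u$, a sum over a union of Tamari order ideals with nonconstant coefficients, and this is not in any evident sense ``the associated symmetrization in the brace structure''; so applying Theorem~\ref{th:xprel} term by term does not produce the clean colour dichotomy you describe without first controlling these Tamari multiplicities. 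Your colour analysis (black node $\leftrightarrow$ right branch $\leftrightarrow$ white-rooted representative with sign $+$, white node $\leftrightarrow$ left branch $\leftrightarrow$ black-rooted representative with sign $-$) is consistent with the statement, but to make the argument complete you would either have to prove the reassembly claim --- which essentially forces you to reconstruct the paper's bijection of triples --- or abandon the $X$-basis detour and argue directly on the branch-gluing description of $\droit$ in the $\c$-basis.
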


\Proof
Recall that the preLie product of two naked trees $T'\droit T''$
is obtained by gluing in all possible ways $T'$ on the middle of all
branches of $T''$ (adding an invisible root to $T''$ so that it is its right
subtree), the sign of the result depending on the branch being left
(-1) or right (+1), see 
\eqref{eq-prelie-nus}.

Consider two L-classes $C'$ and $C''$ and let
$T_1$ be a tree occuring in $C'\droit C''$.
Let now $T_2$ be a tree obtained by a right branch exchange from $T_1$ (hence
in the same L-class as $T_1$). We want to show that the coefficient
of $T_2$ in the preLie product $C'\droit C''$ is the same as the coefficient
of $T_1$ in the same product.

To do this, let us encode all the ways of obtaining $T_1$ in $T_1'\droit T_1''$
with $T'_1\in C'$ and $T''_1\in C''$ as a set of triples
$(T'_1,T''_1,a)$ where $a\in T''_1$ is the branch 
used to insert
$T'_1$ inside $T''_1$.

Now, follow the branch $a$ through the right branch exchange sending $T_1$ to
$T_2$ and contract it (that is, reverse the gluing process on it).
We then get two trees, $T'_2$ and $T''_2$. Now, several cases arise.
If $a$ was not itself the right branch on which the exchange occurred, it is
obvious that: either $T'_2=T'_1$ and $T''_2$ is obtained from $T''_1$ by an
exchange along a right branch, or $T''_2=T''_1$ and $T'_2$ is obtained from
$T'_1$ by an exchange along a right branch.
Moreover, if $a$ was the exchange branch, then the parent branch of $a$ was
itself a right branch (condition to be allowed to exchange subtrees) and $T_2$
is then obtained by gluing $T'_1$ onto the father branch of $a$ in $T''_1$.

So each triple of $T_1$ is sent to a different triple of $T_2$. Since their
roles can be reversed, this injection is in fact a bijection, which proves
the required result.
\qed

Let us now describe the pre-Lie product of two L-classes on their
natural combinatorial encodings.

For example, let $s_1$ be the L-class of
$\arbz$
and $s_2$ be the L-class~\eqref{ex-taille4}.
Then $B_1$ and $B_2$ are respectively
\begin{equation}
\xymatrix@R=0.1cm@C=1mm{
 && {\GrTeXBox{\bullet}}\arx1[d]\\
 &&{\GrTeXBox{\circ}}\arx1[d] \\
 &&{\GrTeXBox{\bullet}}\\
}
\text{\qquad and\qquad}
\xymatrix@R=0.1cm@C=1mm{
 & {\GrTeXBox{\bullet}}\arx1[dl]\arx1[d]\arx1[dr]\\
 {\GrTeXBox{\circ}}\arx1[d] & {\GrTeXBox{\circ}} & {\GrTeXBox{\circ}}\\
 {\GrTeXBox{\bullet}} & *{} & *{} & *{} \\
}
\end{equation}
and the product $s_1\droit s_2$ is
\begin{equation}
- \xymatrix@R=0.1cm@C=1mm{
 & {\GrTeXBox{\bullet}}\arx1[dl]\arx1[d]\arx1[dr]\\
 {\GrTeXBox{\circ}}\arx1[d] & {\GrTeXBox{\circ}} & {\GrTeXBox{\circ}}\arx1[d]\\
 {\GrTeXBox{\bullet}} & *{} & {\GrTeXBox{\bullet}}\arx1[d] \\
 *{} & *{} & {\GrTeXBox{\circ}}\arx1[d] \\
 *{} & *{} & {\GrTeXBox{\bullet}} \\
}
- \xymatrix@R=0.1cm@C=1mm{
 & {\GrTeXBox{\bullet}}\arx1[dl]\arx1[d]\arx1[dr]\\
 {\GrTeXBox{\circ}}\arx1[d]\arx1[dr] & {\GrTeXBox{\circ}} & {\GrTeXBox{\circ}} \\
 {\GrTeXBox{\bullet}}\arx1[d] & {\GrTeXBox{\bullet}} \\
 {\GrTeXBox{\circ}}\arx1[d] \\
 {\GrTeXBox{\bullet}} \\
}
- \xymatrix@R=0.1cm@C=1mm{
 & {\GrTeXBox{\bullet}}\arx1[dl]\arx1[d]\arx1[dr]\\
 {\GrTeXBox{\circ}}\arx1[d]\arx1[dr] & {\GrTeXBox{\circ}} & {\GrTeXBox{\circ}} \\
 {\GrTeXBox{\bullet}} & {\GrTeXBox{\bullet}}\arx1[d] \\
 & {\GrTeXBox{\circ}}\arx1[d] \\
 & {\GrTeXBox{\bullet}} \\
}
+ \xymatrix@R=0.1cm@C=1mm{
 & {\GrTeXBox{\bullet}}\arx1[dl]\arx1[d]\arx1[dr]\arx1[drr]\\
 {\GrTeXBox{\circ}}\arx1[d] & {\GrTeXBox{\circ}} & {\GrTeXBox{\circ}} &
 {\GrTeXBox{\circ}}\arx1[dl]\arx1[d] \\
 {\GrTeXBox{\bullet}} && {\GrTeXBox{\bullet}} & {\GrTeXBox{\bullet}} \\
}
+ \xymatrix@R=0.1cm@C=1mm{
 & {\GrTeXBox{\bullet}}\arx1[dl]\arx1[d]\arx1[dr]\\
 {\GrTeXBox{\circ}}\arx1[d] & {\GrTeXBox{\circ}} & {\GrTeXBox{\circ}} \\
 {\GrTeXBox{\bullet}}\arx1[d] \\
 {\GrTeXBox{\circ}}\arx1[d]\arx1[dr] \\
 {\GrTeXBox{\bullet}} & {\GrTeXBox{\bullet}} \\
}
\end{equation}
whereas the product $s_2 \droit s_1$ is
\begin{equation}
- \xymatrix@R=0.1cm@C=1mm{
 & {\GrTeXBox{\bullet}}\arx1[d] \\
 &{\GrTeXBox{\circ}}\arx1[dl]\arx1[d] \\
 {\GrTeXBox{\bullet}} & {\GrTeXBox{\bullet}}\arx1[dl]\arx1[d]\arx1[dr] \\
 {\GrTeXBox{\circ}}\arx1[d] & {\GrTeXBox{\circ}} & {\GrTeXBox{\circ}}\\
 {\GrTeXBox{\bullet}} \\
}
- \xymatrix@R=0.1cm@C=1mm{
 & {\GrTeXBox{\bullet}}\arx1[d] \\
 & {\GrTeXBox{\circ}}\arx1[d]\arx1[dr] \\
 & {\GrTeXBox{\bullet}}\arx1[dl]\arx1[d]\arx1[dr] & {\GrTeXBox{\bullet}}
\\
 {\GrTeXBox{\circ}}\arx1[d] & {\GrTeXBox{\circ}} & {\GrTeXBox{\circ}}\\
 {\GrTeXBox{\bullet}} \\
}
+ \xymatrix@R=0.1cm@C=1mm{
 & {\GrTeXBox{\bullet}}\arx1[d] \\
 &{\GrTeXBox{\circ}}\arx1[d] \\
 & {\GrTeXBox{\bullet}}\arx1[d] \\
 & {\GrTeXBox{\circ}}\arx1[d]\arx1[dr] \\
 & {\GrTeXBox{\bullet}}\arx1[d]\arx1[dr] & {\GrTeXBox{\bullet}}\\
 & {\GrTeXBox{\circ}} & {\GrTeXBox{\circ}} \\
}
+ \xymatrix@R=0.1cm@C=1mm{
 & {\GrTeXBox{\bullet}}\arx1[d]\arx1[dr] \\
 & {\GrTeXBox{\circ}}\arx1[d]\arx1[dr] & {\GrTeXBox{\circ}}\arx1[dr] \\
 & {\GrTeXBox{\bullet}}\arx1[d]\arx1[dr] & {\GrTeXBox{\bullet}} &
   {\GrTeXBox{\bullet}} \\
 & {\GrTeXBox{\circ}} & {\GrTeXBox{\circ}} \\
}
+ \xymatrix@R=0.1cm@C=1mm{
 & {\GrTeXBox{\bullet}}\arx1[d]\arx1[dr] \\
 & {\GrTeXBox{\circ}}\arx1[d] & {\GrTeXBox{\circ}}\arx1[d] \\
 & {\GrTeXBox{\bullet}}\arx1[d]\arx1[dr] & {\GrTeXBox{\bullet}} \\
 & {\GrTeXBox{\circ}}\arx1[d] & {\GrTeXBox{\circ}} \\
 & {\GrTeXBox{\bullet}} \\
}
+ \xymatrix@R=0.1cm@C=1mm{
 & {\GrTeXBox{\bullet}}\arx1[d] \\
 &{\GrTeXBox{\circ}}\arx1[d] \\
 & {\GrTeXBox{\bullet}}\arx1[d] \\
 & {\GrTeXBox{\circ}}\arx1[d] \\
 & {\GrTeXBox{\bullet}}\arx1[d]\arx1[dr] \\
 & {\GrTeXBox{\circ}}\arx1[d] & {\GrTeXBox{\circ}} \\
 & {\GrTeXBox{\bullet}}
}
\end{equation}

\subsection{LR-classes of binary trees}

\subsubsection{R-equivalence}

Exchanging left and right, we have a symmetrical  notion of R-equivalence.
For a binary tree $T$, choose a vertex $v$
that can either be the root of $T$ or the left child of its parent. Then
let $T_1$ and $T_2$ be as depicted below
\begin{equation}
\xymatrix@R=0.1cm@C=1mm{
 & {\GrTeXBox{v}}\arx1[dl]\arx1[dr]\\
 {\GrTeXBox{\bullet}}\arx1[dr] & *{} & {\GrTeXBox{T_1}} \\
 & {\GrTeXBox{T_2}} \\
}
\end{equation}
and define $T'=R_v(T)$ as the result of exchanging $T_1$ and $T_2$:
\begin{equation}
\xymatrix@R=0.1cm@C=1mm{
 & {\GrTeXBox{v}}\arx1[dl]\arx1[dr]\\
 {\GrTeXBox{\bullet}}\arx1[dr] & *{} & {\GrTeXBox{T_1}} \\
 & {\GrTeXBox{T_2}} \\
}
\ \ 
\mapsto
\ \
\xymatrix@R=0.1cm@C=1mm{
 & {\GrTeXBox{v}}\arx1[dl]\arx1[dr]\\
 {\GrTeXBox{\bullet}}\arx1[dr] & *{} & {\GrTeXBox{T_2}} \\
 & {\GrTeXBox{T_1}} \\
}
\end{equation}

The union of this relation and its twin relation 
\eqref{rew-petitpaq} defines the LR-equivalence:
$T'\equiv_{LR}T$ iff $T'=L_v(T)$ or $T'=R_v(T)$ for some vertex $v$.

Again, the numbers of left and right branches are constant in
equivalence classes. 

For example, here are all LR-classes of (incomplete) trees of sizes
$3$ and $4$.
In size $3$, there are three LR-classes for five binary trees and
one class contains three trees:
\begin{equation}
\left\{ \Btacb, \Btcab, \Btbac \right\}
\end{equation}

On size $4$, there are six LR-classes. Here are the four
non-trivial ones.
\begin{equation}
\label{large-taille4}
\left\{ \arbm, \arbl, \arbk, \arbf \right\},
\left\{ \arbi, \arbh \right\},
\left\{ \arbg, \arbe, \arbd, \arbb \right\},
\left\{ \arbj, \arbc \right\}.
\end{equation}

\subsubsection{Combinatorial encoding of the LR-classes}

Consider an LR-class $C$ and a tree $T$ belonging to it.
Number its nodes as above in infix order and build two set partitions $L(T)$
and $R(T)$ where $L(T)$ (resp. $R(T)$) consists of the subsets of numbers of
nodes along left (resp. right) branches.
The \emph{left} subsets will correspond to white nodes and the right ones to
black nodes.

Now build a bipartite  free tree (\emph{i.e.}, neither rooted nor ordered)
whose nodes represent the blocks of  $L(T)$ or $R(T)$ (with the appropriate
colors) such that two nodes are connected iff their intersection in nonempty.

For example, consider the trees of the LR-class \eqref{large-taille4}. 
The first three give rise to the same
tree (see Equations~\eqref{ex-taille4} and~\eqref{bipart-taille4})
and the last one is labelled as
\begin{equation}
\xymatrix@R=0.1cm@C=1mm{
 & {\GrTeXBox{4}}\arx1[dl]\\
 {\GrTeXBox{1}}\arx1[dr]\\
 *{} & {\GrTeXBox{2}}\arx1[dr]\\
 *{} & *{} & 3 \\
}
\end{equation}
so that its (plane bipartite) tree is
\begin{equation}
\xymatrix@R=0.1cm@C=1mm{
 & {\GrTeXBox{\bullet}}\arx1[d]\\
 & {\GrTeXBox{\circ}}\arx1[d]\\
 *{} & {\GrTeXBox{\bullet}}\arx1[d]\arx1[dr]\\
 *{} & {\GrTeXBox{\circ}} & {\GrTeXBox{\circ}} \\
}
\end{equation}
which is again topologically equivalent to the three previous trees.

\begin{theorem}
The LR-classes are parametrized by bipartite  free trees.
\end{theorem}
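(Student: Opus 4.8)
The plan is to study the encoding map $\Phi$ sending a binary tree $T$ to the free bipartite tree $B(T)$ whose vertices are the blocks of $L(T)$ (white) and $R(T)$ (black), two vertices being joined when the corresponding branches meet, and to show that $\Phi$ induces a bijection from LR-classes to $2$-colored free trees. The argument splits into well-definedness, LR-invariance, and bijectivity, with the real content concentrated in the last step.

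\textbf{Well-definedness.} First I would check that $B(T)$ is genuinely a free tree. Each node of $T$ lies in exactly one maximal left branch and exactly one maximal right branch, and a left branch and a right branch meet in at most one node (two common nodes would be joined by a path that is simultaneously all-left and all-right). Hence the nodes of $T$ are in bijection with the edges of $B(T)$, while its vertices are the blocks of $L(T)$ and $R(T)$. Counting tops of branches, a node starts a new left branch iff it is the root or a right child, and symmetrically for right branches, so the number of white blocks plus black blocks equals $2+(\#\text{left children}+\#\text{right children})=m+1$, where $m$ is the number of nodes. Since every parent--child pair of $T$ shares a block, $B(T)$ is connected; with $m$ edges and $m+1$ vertices it is a (properly $2$-colored) free tree.

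\textbf{Invariance.} Mirroring the proof of the L-class theorem, an $L$-move exchanges two white subtrees hanging from a common black node (a reordering of the children of a black node), while an $R$-move exchanges two black subtrees hanging from a common white node (a reordering of the children of a white node). Because $B(T)$ is unordered and unrooted, neither operation changes it, so $\Phi$ is constant on LR-classes and descends to a map on them.

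\textbf{Bijectivity.} For surjectivity, given any $2$-colored free tree $\beta$ with at least one edge (hence at least one black vertex), I would root it at a black vertex, choose any planar order, and apply the inverse of the black-rooted (twisted Knuth) bijection of the L-class theorem to produce a $T$ with $B(T)=\beta$. For injectivity I would combine the L-class theorem with the $R$-move analysis: the former identifies an L-class with a black-rooted plane bipartite tree in which the children of black nodes are unordered, and the $R$-moves additionally unorder the children of white nodes. What remains is a \emph{re-rooting} step. If $\rho$ is the root of $T$ and $u_0=\rho,u_1,\dots,u_k$ is its maximal left branch (the white block $w_0$), then the $R$-move at $u_i$ is precisely the adjacent transposition of the right subtrees $T^{(i)},T^{(i+1)}$ hanging at $u_i,u_{i+1}$; composing these adjacent transpositions we may bring the branch attached at any $u_j$ to the position of $\rho$, so that its black vertex becomes the new distinguished root. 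The black neighbours of $w_0$ are exactly these branches, so we can re-root at any black vertex adjacent to the current root through a white vertex, and by connectivity of $\beta$ we reach every black vertex. Thus all black-rootings of a fixed free tree are LR-equivalent, and $B(T)=B(T')$ forces $T\equiv_{LR}T'$.

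The main obstacle is exactly this re-rooting step: the L-class theorem delivers only a \emph{rooted} object, and one must verify that LR-equivalence forgets the root entirely, not merely the child-orderings. Showing that the $R$-moves along the root's left branch generate arbitrary permutations of the attached right subtrees, and hence let the root migrate to any black vertex, is the crux; the remaining steps are routine once the twisted-Knuth bijection is available. As a consistency check, the $2$-colored free trees on $m+1$ vertices number $3$ for $m=3$ (the path, and the stars with black, resp. white, centre) and $6$ for $m=4$, matching the stated LR-class counts.
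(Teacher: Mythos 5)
Your strategy is the paper's own (the printed proof is essentially the two\-/line remark that an exchange along a right edge reorders the children of a black vertex and an exchange along a left edge reorders those of a white vertex), and your well\-/definedness, invariance and surjectivity steps are correct and in fact more careful than what the paper records. The gap is in the re\-/rooting step, which you rightly identify as the crux. The $R$-moves along the maximal left branch $w_0=\{u_0=\rho,\dots,u_k\}$ permute the right subtrees $T^{(0)},\dots,T^{(k)}$ hanging on it, but they never alter $w_0$ itself, so after any sequence of such moves the binary root $\rho$ still lies in the same white block $w_0$. Consequently the black vertices you can install as the distinguished root are exactly the black neighbours of $w_0$, and iterating gains nothing: the invariant ``which white block contains $\rho$'' is preserved by every $R$-move and by every $L$-move except the one performed at $\rho$ itself, so your argument stays confined to the star of $w_0$. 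The claim ``any black vertex adjacent to the current root through a white vertex'' is therefore stronger than what you established (you only get the one white neighbour of $B(\rho)$ that contains $\rho$), and for an asymmetric $\beta$ this is a genuine obstruction: if $B(\rho)$ has a second white neighbour $w'$ not isomorphic to $w_0$ under any automorphism of $\beta$, the black vertices beyond $w'$ are never reached, so ``by connectivity'' does not get off the ground. (Your small examples escape this only because paths of length $\le 4$ have a colour-preserving reversal.)

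The missing ingredient is the $L$-move at $\rho$, which is legal precisely because $\rho$ is the root: exchanging the left subtree of $\rho$ with the left subtree of its right child replaces the maximal left branch through $\rho$ by $\{\rho\}$ followed by the top left branch of the exchanged subtree, i.e.\ it moves $\rho$ into any other white neighbour of the current black block $B(\rho)$ while fixing $\beta$. Alternating $L$-moves at $\rho$ (to move $W(\rho)$ among the white neighbours of $B(\rho)$) with your $R$-moves along the current $W(\rho)$ (to move $B(\rho)$ among its black neighbours) lets the edge $(B(\rho),W(\rho))$ of $\beta$ traverse an arbitrary path, and connectivity then genuinely yields every black vertex as a possible root. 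With this one additional observation your injectivity argument closes, and the rest of the proposal stands.
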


\Proof
LR-classes are built by successively applying either an exchange of subtrees
along a left or a right branch. None of these change the tree associated
with their plane trees (defined as the encoding of their associated L-classes). 
Indeed, a right exchange corresponds to an exchange of children of a black
node whereas a left exchange corresponds to an exchange of children of a white
node. 
The statement then follows since this is an equivalence.
\qed

\subsection{LR-classes and the descent algebra}

The LR-classes arose while investigating the following question: given a
generic linear combination $g$ of the $\c_t$, regroup the $\c_t$ that
must have the same coefficient if one requires that $C$ belongs to $\NCSF$.

In this case, one can prove the following result:
\begin{theorem}
The intersection $\Lie\cap\NCSF$ is contained in the linear span of the
LR-classes.
\end{theorem}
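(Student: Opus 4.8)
The plan is to identify $\Lie\cap\NCSF$ with the primitive Lie algebra $L(\Psi)$ of $\Sym$ and then to trap it inside the intersection of two copies of the subalgebra supplied by the preceding theorem. First I would note that an element $g\in\Lie\cap\NCSF$ is primitive in $\FQSym$: by the convolution proposition, $\Lie$ sits inside the primitive Lie algebra of $\K\SG=\FQSym$, and since $\Sym\hookrightarrow\FQSym$ is a Hopf embedding, $g$ is then primitive in $\Sym$ as well. By Theorem~\ref{LIEIDEM} it therefore lies in $L(\Psi)$, the Lie algebra freely generated by the $\Psi_n$. Hence it suffices to prove $L(\Psi)\subseteq\langle\text{LR-classes}\rangle$.

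The key structural input is the immediately preceding theorem: the span $\SC$ of the L-classes is a pre-Lie (hence Lie) subalgebra of $\CC$. Each generator $\Psi_n$ equals $\c_T$ for the left-comb tree, whose naked tree has only left edges; neither a left- nor a right-branch exchange can be performed on it, so it is at once a singleton L-class and a singleton R-class. Thus every $\Psi_n$ lies in $\SC$, and since $\SC$ is a Lie subalgebra containing the generators of $L(\Psi)$, we obtain $L(\Psi)\subseteq\SC$.

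Next I would exploit the left–right symmetry of the entire construction. Mirroring naked binary trees (swapping the two subtrees at each node) is realized on $\FQSym$ by the sign-twisted complement–reversal $\tilde\kappa\colon\G_\sigma\mapsto(-1)^{n-1}\G_{w_0\sigma w_0}$ for $\sigma\in\SG_n$. Complementation is a $\star$-automorphism and reversal a $\star$-anti-automorphism, so the twist makes $\tilde\kappa$ a Lie automorphism of $(\Lie,[\,,]_\star)$ preserving $\CC$. It sends each $\c_t$ to $\pm\c_{\bar t}$ for the mirror tree $\bar t$ and interchanges left-branch with right-branch exchanges, so it carries $\SC$ isomorphically onto the span $\SC^{\rm R}$ of the R-classes; in particular $\SC^{\rm R}$ is again a Lie subalgebra, and it likewise contains every $\Psi_n$ (the comb being a singleton R-class). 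Consequently $L(\Psi)\subseteq\SC^{\rm R}$ as well, whence $L(\Psi)\subseteq\SC\cap\SC^{\rm R}$.

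Finally, read in $\c$-coordinates, $\SC$ consists of the vectors $(\alpha_t)$ constant on L-classes and $\SC^{\rm R}$ of those constant on R-classes; since LR-equivalence is by definition generated by left and right exchanges, a vector is constant on both families of classes exactly when it is constant on LR-classes. Therefore $\SC\cap\SC^{\rm R}$ is precisely the span of the LR-classes, and $L(\Psi)\subseteq\SC\cap\SC^{\rm R}$ is the claimed containment. I expect the only genuine work to be the justification of the mirror symmetry: checking that $\tilde\kappa$ is a Lie automorphism, that it sends $\c_t$ to $\pm\c_{\bar t}$ while swapping the two kinds of branch exchange, and that the resulting sign is constant on each class (which follows because the numbers of left and right edges are class invariants). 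Everything else is a formal assembly of Theorem~\ref{LIEIDEM}, the pre-Lie subalgebra theorem, and the elementary fact that the common refinement of ``constant on L-classes'' and ``constant on R-classes'' is ``constant on LR-classes''.
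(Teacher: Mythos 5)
Your argument is correct, and it reaches the conclusion by a route that differs from the paper's in one essential step. Both proofs begin identically: $\Lie\cap\NCSF$ consists of primitive elements of $\Sym$ (since $\Lie$ lies in the primitive Lie algebra of $\FQSym$ and the coproduct of $\Sym$ is the restriction of that of $\FQSym$), hence equals $L(\Psi)$ by Theorem~\ref{LIEIDEM}; and each $\Psi_n$, being the left comb, admits no nontrivial exchange and is a singleton class. Where you diverge is in establishing that the span $\LC$ of the LR-classes is a Lie subalgebra: the paper invokes Proposition~\ref{LRLie}, whose stated proof is the one-line remark that it follows from the preLie structure on the L-classes, whereas you make the underlying left--right symmetry explicit, realizing the mirror of naked trees by $\tilde\kappa\colon\G_\sigma\mapsto(-1)^{n-1}\G_{w_0\sigma w_0}$, checking that it is a Lie automorphism of $(\Lie,[\cdot,\cdot]_\star)$ (complementation is a $\star$-automorphism, reversal a $\star$-anti-automorphism, and the sign depends only on the degree) which sends $\c_t$ to $\pm\c_{\bar t}$ and L-exchanges to R-exchanges, and then identifying $\LC$ as the intersection $\SC\cap\SC^{\rm R}$ of two Lie subalgebras --- vectors constant on L-classes and on R-classes being exactly those constant on LR-classes, since LR-equivalence is generated by the two exchange moves. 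This buys a self-contained and arguably more transparent justification of Proposition~\ref{LRLie} (an intersection of Lie subalgebras is a Lie subalgebra) at the cost of verifying the automorphism property of $\tilde\kappa$, which you do correctly. The only point worth making explicit is the linear independence of the $\c_t$ (they are sums over disjoint subsets of the PBW basis), which is what allows membership in $\SC$, $\SC^{\rm R}$ and $\LC$ to be read off as constancy of coefficients on the respective classes.
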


\Proof It is known that $\Lie$ is (strictly) contained in the primitive Lie
algebra of $\FQSym$ ({\it cf.} \cite{PaR}; this is also an easy consequence of
Ree's criterion).
The coproduct of $\NCSF$ is the restriction of that of $\FQSym$, so all
elements of  $\Lie\cap\NCSF$ are primitive. Conversely, the primitive Lie
algebra of $\NCSF$ is generated by the Dynkin elements $\Psi_n$, which is an
element of the PBW basis, and is alone in its LR-class (its bipartite free
tree is the star with a black vertex at the center).
Hence, the result follows from Proposition \ref{LRLie} below.
\qed

The number of LR-classes in size $n$ is greater that the dimension of
$\NCSF_n\cap \Lie(n)$, so that an LR-class is not necessarily in $\NCSF$.

Here are some tables to clarify this point.

Let us define $a'_{n,k}$ as the dimension of $\Lie(n)$ intersected with
$\NCSF_{n,k}$ where $\NCSF_{n,k}$ is the subspace of $\NCSF$ spanned by all
ribbons with $k$ parts.

Note that the $a'_{n,k}$ are known (from external considerations) to be
Sequence $A055277$, which is the number of Lyndon words in two letters $a$ and
$b$ with $k$ times letter $b$. 

\begin{figure}[ht]
\begin{equation}
\begin{array}{|c||c|c|c|c|c|c|c|}
\hline
n\backslash k
    & 0 & 1 & 2 &  3  &  4  &  5  & 6 \\
\hline
2   & 1 &   &   &     &     &     &   \\
\hline
3   & 1 & 1 &   &     &     &     &   \\
\hline
4   & 1 & 1 & 1 &     &     &     &   \\
\hline
5   & 1 & 2 & 2 &  1  &     &     &   \\
\hline
6   & 1 & 2 & 3 &  2  &  1  &     &   \\
\hline
7   & 1 & 3 & 5 &  5  &  3  &  1  &   \\
\hline
8   & 1 & 3 & 7 &  8  &  7  &  3  & 1 \\
\hline
\end{array}
\hskip.5cm
\begin{array}{c}
~ \\
A245558 \\  
\\  %
\\
\\
\end{array}
\end{equation}
\caption{\label{large-ank} The $a'_{n,k}$.}
\end{figure}

Let now $b'_{n,k}$ be the number of LR-classes with $n-1$ nodes
and $k$ right branches.

\begin{figure}[ht]
\begin{equation}
\begin{array}{|c||c|c|c|c|c|c|c|}
\hline
n\backslash k
    & 0 & 1 & 2 &  3  &  4  &  5  & 6 \\
\hline
2   & 1 &   &   &     &     &     &   \\
\hline
3   & 1 & 1 &   &     &     &     &   \\
\hline
4   & 1 & 1 & 1 &     &     &     &   \\
\hline
5   & 1 & 2 & 2 &  1  &     &     &   \\
\hline
6   & 1 & 2 & 4 &  2  &  1  &     &   \\
\hline
7   & 1 & 3 & 7 &  7  &  3  &  1  &   \\
\hline
8   & 1 & 3 & 10& 14  & 10  &  3  & 1 \\
\hline
\end{array}
\hskip.5cm
\begin{array}{c}
~ \\
A122085 \\  
\\  %
\\
\\
\end{array}
\end{equation}
\caption{\label{large-bnk} The $b'_{n,k}$.}
\end{figure}

The first discrepance between $a'_{n,k}$ and $b'_{n,k}$ occurs at $n=6$.
Indeed, all ten LR-classes do not belong to $\NCSF$.
The discrepance occurs with $k=2$, hence on free trees with $3$ black and
$3$ white nodes.
They are
\begin{equation}
\xymatrix@R=0.1cm@C=1mm{
 & {\GrTeXBox{\bullet}}\arx1[dr] & *{} & {\GrTeXBox{\bullet}}\arx1[dl] \\
 && {\GrTeXBox{\circ}}\arx1[d]\\
 & *{} & {\GrTeXBox{\bullet}}\arx1[dl]\arx1[dr]\\
 & {\GrTeXBox{\circ}} & *{} & {\GrTeXBox{\circ}} \\
}
\quad
\xymatrix@R=0.1cm@C=1mm{
 & {\GrTeXBox{\bullet}}\arx1[d] \\
 & {\GrTeXBox{\circ}}\arx1[d]  \\
 & {\GrTeXBox{\bullet}}\arx1[d] \\
 & {\GrTeXBox{\circ}}\arx1[d]  \\
 & {\GrTeXBox{\bullet}}\arx1[d] \\
 & {\GrTeXBox{\circ}}  \\
}
\quad
\xymatrix@R=0.1cm@C=1mm{
 & {\GrTeXBox{\bullet}}\arx1[d] \\
 & {\GrTeXBox{\circ}}\arx1[d]\\
 & {\GrTeXBox{\bullet}}\arx1[d]\arx1[dr]\\
 & {\GrTeXBox{\circ}}\arx1[d] & {\GrTeXBox{\circ}} \\
 & {\GrTeXBox{\bullet}} \\
}
\quad
\xymatrix@R=0.1cm@C=1mm{
 & {\GrTeXBox{\circ}}\arx1[d] \\
 & {\GrTeXBox{\bullet}}\arx1[d]\\
 & {\GrTeXBox{\circ}}\arx1[d]\arx1[dr]\\
 & {\GrTeXBox{\bullet}}\arx1[d] & {\GrTeXBox{\bullet}} \\
 & {\GrTeXBox{\circ}} \\
}
\end{equation}

Let us denote these LR-classes by $L_1,\dots,L_4$.
Expanding these as combinations of permutations in $\FQSym_6$, one finds that
none belongs to $\NCSF_6$.
However, the intersection of their linear span with $\NCSF_6$ has dimension 3,
and a linear basis is given, \emph{e.g.}, by
\begin{equation}
L_1+L_4 \quad,\quad L_2+L_4 \quad,\quad L_3-L_4.
\end{equation}

\begin{proposition}\label{LRLie}
Let $\LC(n)$ be the (vector) subspace of $\Lie(n)$ (and of naked trees)
generated by LR-classes.
Then $\LC(n)$ is a Lie algebra.
\end{proposition}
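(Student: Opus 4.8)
The plan is to realise $\LC(n)$ as the intersection of $\SC(n)$ with its mirror image, and to drive everything by the word-reversal symmetry $\sigma\mapsto\overline{\sigma}$ already singled out in the lemma $c_\sigma=(-1)^{n-1}c_{\overline\sigma}$. First I would record the elementary combinatorics. Since $\equiv_{L}$ and $\equiv_{R}$ are both refinements of $\equiv_{LR}$, every LR-class is a union of L-classes and also a union of R-classes. Writing $\SC(n)$ (resp. $\SC^{R}(n)$) for the span of the sums over L-classes (resp. over R-classes), an element of $\CC_n$ lies in $\SC(n)$ iff its coordinates in the $\c$-basis are constant on every L-class, and lies in $\SC^{R}(n)$ iff they are constant on every R-class. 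Because any two trees in one LR-class are joined by a finite chain of $L_v$- and $R_v$-moves, a coordinate function is constant on LR-classes iff it is constant on both L- and R-classes; hence
\[
\LC(n)=\SC(n)\cap\SC^{R}(n).
\]
In particular $\LC(n)\subseteq\SC(n)$, and since the preceding theorem makes $\SC(n)$ a sub-pre-Lie-algebra of $\CC$, it is a Lie subalgebra for the convolution bracket (which is the antisymmetrisation of $\droit$). Thus $[\LC(n),\LC(n)]_\star\subseteq\SC(n)$.

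The heart of the proof is a reversal involution. Let $\phi$ be the linear map of $\K\SG$ sending $\sigma$ to $\overline{\sigma}=(\sigma_n,\dots,\sigma_1)$. Reversing a concatenation $\gamma=uv$ with $|u|=|\alpha|$ turns it into $\overline{v}\,\overline{u}$, and $\std(\overline{v})=\overline{\std(v)}$, $\std(\overline{u})=\overline{\std(u)}$; comparing with Eq.~\eqref{eq:star} this gives $\phi(\alpha\star\beta)=\phi(\beta)\star\phi(\alpha)$, so $\phi$ is an anti-automorphism of the convolution algebra, whence a Lie anti-automorphism, $\phi([a,b]_\star)=-[\phi(a),\phi(b)]_\star$. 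It is clearly an involution, $\overline{\overline{\sigma}}=\sigma$. The geometric content is that on the PBW basis $\phi$ acts, up to the global sign $(-1)^{n-1}$ (irrelevant for subspace questions), by reflecting the complete binary tree left-to-right and complementing the leaf labels $i\mapsto n+1-i$; the reflected and complemented labelling is again admissible, so $\phi$ permutes the PBW basis and descends to $\c_t\mapsto\pm\,\c_{\overline{t}}$, where $\overline{t}$ is the left-right reflection of the naked tree $t$. Reflection interchanges left and right branches, hence carries $L_v$ to an $R_v$-move; consequently $\phi$ sends L-classes to R-classes and stabilises LR-classes, i.e.
\[
\phi\bigl(\SC(n)\bigr)=\SC^{R}(n),\qquad \phi\bigl(\LC(n)\bigr)=\LC(n).
\]

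Granting these facts the conclusion is purely formal. Since $\phi$ is a Lie anti-automorphism with $\phi(\LC(n))=\LC(n)$ and $\phi^{2}=\mathrm{id}$, the bracket subspace is stable: $\phi([\LC(n),\LC(n)]_\star)=[\LC(n),\LC(n)]_\star$, the minus sign of the anti-automorphism being absorbed by the span. Applying $\phi$ to the inclusion $[\LC(n),\LC(n)]_\star\subseteq\SC(n)$ established above yields
\[
[\LC(n),\LC(n)]_\star=\phi\bigl([\LC(n),\LC(n)]_\star\bigr)\subseteq\phi\bigl(\SC(n)\bigr)=\SC^{R}(n),
\]
so that $[\LC(n),\LC(n)]_\star\subseteq\SC(n)\cap\SC^{R}(n)=\LC(n)$, which is exactly the assertion.

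I expect the single real obstacle to be the geometric claim about $\phi$: verifying that word reversal stabilises $\Lie(n)$ and acts on the PBW basis precisely by the left-right reflection (with label complementation) that exchanges $\equiv_{L}$ and $\equiv_{R}$. Concretely one must check that the reflected-complemented labelling of a complete binary tree is admissible and that this gives a bijection between the admissible labellings of $T$ and of its mirror, so that $\phi(\c_t)=\pm\c_{\overline t}$; the admissibility constraints "smallest label in the left subtree, largest in the right" are swapped by reflection and restored by complementation, so this should go through cleanly. Everything else — the identification $\LC(n)=\SC(n)\cap\SC^{R}(n)$ and the closure argument — is a routine consequence of the preceding pre-Lie theorem once this symmetry is in place.
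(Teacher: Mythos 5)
Your reduction $\LC(n)=\SC(n)\cap\SC^{R}(n)$ and the final formal closure argument are both sound, but the symmetry you build them on fails, and this is a genuine gap. The word-reversal map $\phi:\sigma\mapsto\overline{\sigma}$ is indeed an involutive anti-automorphism of $(\K\SG,\star)$, but on $\Lie(n)$ it acts as the scalar $(-1)^{n-1}$: by induction, the reversal anti-automorphism of the concatenation algebra sends any homogeneous Lie element of degree $n$ to $(-1)^{n-1}$ times itself (for $n=3$, $\phi([[1,2],3])=[3,[2,1]]=[[1,2],3]$, not $\pm[1,[2,3]]$). This is precisely what the lemma $c_\sigma=(-1)^{n-1}c_{\overline{\sigma}}$ you invoke is recording. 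Consequently $\phi(\c_t)=(-1)^{n-1}\c_t$ for every tree, $\phi(\SC(n))=\SC(n)$ rather than $\SC^{R}(n)$, and the whole argument yields nothing. The map you describe geometrically (reflect the tree left-to-right and complement the leaf labels) is not $\sigma\mapsto(\sigma_n,\dots,\sigma_1)$ but the two-sided conjugate $\psi:\sigma\mapsto w_0\sigma w_0$, i.e.\ reverse the positions \emph{and} complement the values.

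The architecture does survive if you replace $\phi$ by $\psi$. One checks exactly as you did that $\psi(\alpha\star\beta)=\psi(\beta)\star\psi(\alpha)$, so $\psi$ is an involutive anti-automorphism of the convolution algebra; and since $\psi$ is the composite of reversal with the letter-complementation automorphism of the free associative algebra, $\psi(T(\sigma))$ is the bracketing over the mirror tree with the reflected-complemented labelling, which \emph{is} admissible, whence $\psi(\c_t)=\c_{\overline{t}}$, $\psi(\SC(n))=\SC^{R}(n)$ and $\psi(\LC(n))=\LC(n)$. With that substitution your chain $[\LC(n),\LC(n)]_\star\subseteq\SC(n)\cap\SC^{R}(n)=\LC(n)$ is correct. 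Note that even repaired this is a different route from the paper, which deduces the proposition from the explicit combinatorial product rule of L-classes on bipartite rooted trees (antisymmetrizing that rule produces sums depending only on the underlying free bipartite tree, hence only on the LR-class); the symmetry argument is shorter but gives no formula for the bracket, whereas the paper's approach also yields the combinatorial description of $[s_1,s_2]$ stated in the theorem that follows.
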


\Proof
This follows directly from the fact that the subspace generated by L-classes
is a preLie algebra with a simple product rule.
\qed

Again, directly from the product rule of the L-classes expressed as bipartite
black-rooted trees, the Lie product of the LR-classes is quite simple:

\begin{theorem}
The Lie product $[s_1,s_2]$ of two LR-classes can be computed
as follows: let $B_1$ and $B_2$ be their respective representatives as
free bipartite trees.
Then the product is obtained as the following sum:
all trees obtained by gluing $B_1$ as a child of a white (resp. black)
node of $B_2$ with a minus (resp. plus) sign.
The multiplicity of a given tree is equal to the number of ways to cut it into
two parts such that one is $s_1$ and the other one $s_2$.
\end{theorem}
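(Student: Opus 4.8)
The plan is to read the Lie product off the pre-Lie rule for L-classes established above for $\SC(n)$, using the inclusion $\LC(n)\subseteq\SC(n)$ together with the fact (Proposition~\ref{LRLie}) that $\LC(n)$ is closed under the induced bracket $[s_1,s_2]=s_1\droit s_2-s_2\droit s_1$. The starting point is that, since $[s_1,s_2]$ lies in $\LC(n)$, its coefficients in the $\c_t$ basis are constant along each LR-class; hence the coefficient of a free bipartite tree $F$ in $[s_1,s_2]$ equals the coefficient of any single black-rooted L-class representative $G$ of $F$. It therefore suffices to compute $[s_1,s_2]_G$ for one such $G$ and to check that it agrees with the signed count of cuts of $F$ in the statement.

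First I would expand $[s_1,s_2]_G=[s_1\droit s_2]_G-[s_2\droit s_1]_G$ and apply the L-class product rule to each term. Writing $s_1$ and $s_2$ as sums of L-classes and using bilinearity, that rule identifies $[s_1\droit s_2]_G$ with the signed number of edges $e$ of $G$ whose removal splits $G$ into a root-side isomorphic to $B_2$ and a complementary side isomorphic to $B_1$: the contribution is $-1$ when the black end of $e$ lies on the $B_1$-side (a black-rooted representative of $s_1$ glued below a white node) and $+1$ when it lies on the $B_2$-side (a white-rooted representative glued below a black node). Symmetrically, $[s_2\droit s_1]_G$ is the analogous signed count over edges whose root-side is isomorphic to $B_1$, with the two signs interchanged.

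The crux is to merge these two contributions into a single root-free count. For any edge $e$ cutting $F$ into a $B_1$-piece and a $B_2$-piece, the distinguished root of $G$ lies in exactly one of the two pieces; when it lies in the $B_2$-piece the edge is seen only by $[s_1\droit s_2]_G$, and when it lies in the $B_1$-piece only by $[s_2\droit s_1]_G$, the two root-side conditions being complementary as soon as $B_1\neq B_2$. A direct sign check then shows that in either case the net contribution of $e$ to $[s_1,s_2]_G$ is $-1$ exactly when the black end of $e$ lies in the $B_1$-piece and $+1$ when it lies in the $B_2$-piece, which is precisely the sign the statement attaches to $e$ (gluing $B_1$ below a white, resp.\ black, node of $B_2$). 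Summing over all such edges, the root-dependent multiplicities of the pre-Lie rule collapse into the symmetric count of ways to cut $F$ into a $B_1$ part and a $B_2$ part, which is the asserted multiplicity.

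The main obstacle is exactly this sign-and-root bookkeeping: one must verify that the black-rooted/white-rooted alternative in the L-class pre-Lie rule corresponds to the colour of the $B_1$-end of the cut edge, and that the two complementary root-side conditions together remove all dependence on the chosen root of $G$. The degenerate case $B_1=B_2$ needs no separate argument, since the bracket is antisymmetric and vanishes there; for $B_1\neq B_2$ complementarity is automatic, so no edge is double-counted and the formula follows.
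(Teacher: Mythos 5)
Your proposal is correct and follows exactly the route the paper intends: the paper states that this theorem follows ``directly from the product rule of the L-classes expressed as bipartite black-rooted trees'' and gives no further detail, and your argument is precisely that derivation carried out carefully (decomposing each LR-class into L-classes, computing the coefficient of one black-rooted representative $G$, and checking that the two root-side cases merge into the root-free signed count of cuts). The sign bookkeeping and the treatment of the degenerate case $B_1=B_2$ are both handled correctly.
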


\section{Appendix}

\subsection{Miscellaneous remarks}

The LR-classes admit several alternative definitions. The original one
involves elementary moves on binary trees. Another one relies on two
non-crossing partitions which can be read on a complete binary tree with $n$
leaves: label the sectors by $1,\ldots,n -1$ from left to right.  The blocks
of the first partition $\pi$ consist of the sectors which are separated by a
left branch. The blocks of the second one $\pi'$ consist of the sectors which
are separated by a right branch.
Then, $\pi$ is the Kreweras complement of $\pi'$. Define a directed graph
whose vertices are blocks of $b'$ of $\pi'$ and blocks $b$ of $\pi$. The
oriented branches are the pairs $(b',b)$ such that $b'\cap b\not=\emptyset$.
This graph can be encoded by a bicolored tree, by coloring sinks and source in
black and white, and forgetting the arrows and the labels.

\begin{conjecture}
The bracket of $\LC$ is
\begin{equation}
[x,y] = x\vdash y - y \vdash x
\end{equation}
where $\vdash$ is a Lie admissible operation, computed on bicolored trees from
a coproduct \`a la Connes-Kreimer: the coefficient of $z$ in $x\vdash y$ is
equal to the number of subgraphs of $z$ isomorphic to $x$ such that removing
its vertices yields a graph isomorphic to $y$.
\end{conjecture}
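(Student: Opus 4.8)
The plan is to reduce everything to the explicit product rule for L-classes established above, together with the closure statement of Proposition~\ref{LRLie}. Recall that each LR-class $s$ is, as an element of $\Lie(n)$, the sum $\sum_T \c_T$ over the binary trees $T$ in the class, and that its constituent L-classes are exactly the black-rootings of its free bipartite tree. Since Proposition~\ref{LRLie} guarantees that $[s_1,s_2]=s_1\triangleright s_2-s_2\triangleright s_1$ again lies in the span of the LR-classes, its expansion on the $\c_T$ basis is constant on each LR-class. Hence it will suffice to fix one black-rooted representative $\zeta$ of a target free tree $z$ and to compute the coefficient of $\zeta$, in the L-class basis, of $s_1\triangleright s_2-s_2\triangleright s_1$; that number will be the multiplicity of $z$.

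First I would analyse the coefficient of $\zeta$ in $s_1\triangleright s_2=\sum_{C_1\subseteq s_1,\,C_2\subseteq s_2}C_1\triangleright C_2$, the sum ranging over the black-rootings $C_1$ of $B_1$ and $C_2$ of $B_2$. By the L-class rule, $\zeta$ can occur in a single product $C_1\triangleright C_2$ in two ways: with a minus sign, by grafting the black-rooted representative of $C_1$ onto a \emph{white} node of that of $C_2$; with a plus sign, by grafting a white-rooted representative of $C_1$ onto a \emph{black} node. I expect that, after summing over all $C_1$ and $C_2$, each occurrence is indexed by an edge $e$ of $\zeta$ whose removal splits $z$ into a subtree hanging below $e$ that is isomorphic to $B_1$ and a complementary piece containing the root that is isomorphic to $B_2$. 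Writing $p$ for the endpoint of $e$ on the $B_2$-side and $c$ for the endpoint on the $B_1$-side, the minus case should occur exactly when $c$ is black (then $p$ is white and $c$ is the black root of the relevant $C_1$), and the plus case exactly when $c$ is white (then $p$ is black and the hanging subtree is a white-rooted representative of a single $C_1$). So each admissible edge would contribute once, with sign $-$ or $+$ according as $p$ is white or black.

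The step I expect to be delicate — and the main obstacle — is checking that this bookkeeping involves no over- or under-counting, because a fixed L-class carries several white-rooted representatives. Here I would invoke the fact, recalled before the statement, that every white-rooted bipartite tree comes from a \emph{unique} black-rooted L-class (via the rerooting that moves the first white subtree above the root); thus the hanging piece determines the rooting $C_1$ uniquely, while the rooting $C_2$ is forced to be that of $\zeta$. Finally I would assemble the bracket: an edge of $z$ whose $B_1$-part lies below the chosen root contributes through $s_1\triangleright s_2$ with sign dictated by the colour of its $B_2$-endpoint, while an edge whose $B_2$-part lies below contributes through $-s_2\triangleright s_1$; since adjacent nodes of the bipartite tree $z$ have opposite colours, the two cases collapse to the single rule ``sign $-$ when the $B_2$-node of $e$ is white, sign $+$ when it is black''. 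Summing over all edges that cut $z$ into $B_1$ and $B_2$ then gives precisely the stated signs and the multiplicity (the number of such cuts), and this simultaneously re-exhibits antisymmetry, since interchanging $B_1$ and $B_2$ reverses the colour of the relevant endpoint. This would establish the theorem.
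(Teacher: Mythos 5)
The statement you are addressing is labelled a \emph{conjecture} in the paper, and the paper offers no proof of it; so there is nothing to compare your argument against on the paper's side. What your proposal actually argues for is the preceding theorem of Section~\ref{sec:subalg} --- the description of $[s_1,s_2]$ as a sum of gluings of $B_1$ onto nodes of $B_2$ with a sign $-$ or $+$ according to the colour (white or black) of the node of attachment --- and indeed your final sentence announces that ``this would establish the theorem.'' That derivation, from the pre-Lie product rule on L-classes via Proposition~\ref{LRLie}, is a reasonable outline of how the paper itself justifies that theorem (``directly from the product rule of the L-classes''), though even there you lean on two unproved assertions: that a white-rooted bipartite tree determines its L-class uniquely, and that automorphisms of the free tree cause no over-counting when you sum over the black-rootings $C_1\subseteq s_1$, $C_2\subseteq s_2$.

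The genuine gap is that the colour-signed gluing rule is not the conjectured statement. The conjecture asserts the existence of a single \emph{unsigned} Lie-admissible operation $\vdash$, defined by a Connes--Kreimer-style subgraph count, whose antisymmetrization $x\vdash y-y\vdash x$ reproduces the bracket. Your argument never defines $\vdash$, never verifies Lie admissibility, and never shows that the signed rule you obtain can be rewritten in this antisymmetrized form. This last step is not a formality: for a free bipartite tree $z$, a cut along an edge produces two components, and a naive reading of ``subgraph of $z$ isomorphic to $x$ whose complement is isomorphic to $y$'' is symmetric under exchanging $x$ and $y$, which would force $x\vdash y-y\vdash x=0$. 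The whole content of the conjecture lies in the asymmetry of $\vdash$ (presumably coming from the orientation of the edges of the bicolored tree described just before the conjecture, with sinks and sources coloured black and white), and in matching that asymmetry against the colour of the attachment node in your rule. Your proposal does not touch this, so it cannot be said to establish the conjecture; at best it re-derives the theorem that the conjecture is meant to repackage.
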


Let us also observe that the main result of \cite{BS}, the expression of the
Solomon idempotent on the basis $\c_t$ of $\CC$, is actually (as expected) an expression
of the $LR$-classes, and that moreover, the cofficient of a free tree
is the same for its two possible bicolorings.

\subsection{Admissible labellings and decreasing trees}

\begin{proposition}
\label{prop:labels}
The admissible labellings of a complete binary tree $T$ are in bijection
with the permutations whose decreasing tree has shape $t$, the binary tree
consisting of the internal nodes of $T$. Their number is therefore given by
the hook-length formula.  
\end{proposition}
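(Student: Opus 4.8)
The plan is to build a recursive bijection matching admissible leaf-labellings of a complete binary tree with permutations whose decreasing tree has the prescribed shape, and then to read off the cardinality from the hook-length formula already recorded in the excerpt. Throughout I write $T=T_L\wedge T_R$ for the decomposition of a complete binary tree with $n$ leaves into its two maximal subtrees, say with $T_L$ carrying $k$ leaves; then its internal-node tree is $t=t_L\wedge t_R$, where $t_L$ (resp. $t_R$) has $k-1$ (resp. $n-k-1$) nodes and the root of $t$ has hook-length $n-1$. I would first recall that a permutation of size $m$ has decreasing tree of shape $t$ exactly when the values placed at the nodes of $t$ in in-order decrease away from the root, so that these permutations are counted by $m!\prod_v h(v)^{-1}$; in particular the last sentence of the statement follows the moment the bijection is in hand.

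First I would analyze admissible labellings. Admissibility at the root forces the label $1$ into $T_L$ and the label $n$ into $T_R$, so the set $S$ of labels of $T_L$ is a $k$-subset with $1\in S$ and $n\notin S$; equivalently $S\setminus\{1\}$ is an arbitrary $(k-1)$-subset of $\{2,\dots,n-1\}$. Standardizing the labels inside each subtree, admissibility of the whole labelling is equivalent to admissibility of the induced labellings of $T_L$ and of $T_R$ separately. Hence, writing $A(T)$ for the number of admissible labellings,
\[
A(T)=\binom{n-2}{k-1}\,A(T_L)\,A(T_R),
\]
with $A(\bullet)=1$ for a single leaf.

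Next I would establish the same recursion on the decreasing-tree side. For a permutation $\sigma$ of $\{1,\dots,n-1\}$ with decreasing tree $t=t_L\wedge t_R$, the maximal value $n-1$ occupies the in-order position $k$ of the root of $t$; the values to its left form an arbitrary $(k-1)$-subset of $\{1,\dots,n-2\}$ whose standardization has decreasing tree $t_L$, while the complementary values standardize to a permutation with decreasing tree $t_R$. This gives $N(t)=\binom{n-2}{k-1}N(t_L)N(t_R)$, which agrees with the hook-length computation since the root's hook-length $n-1$ cancels against the binomial. The two recursions have identical shape, so I would define the bijection recursively: send an admissible labelling of $T$ with left-label set $S$ to the permutation with decreasing tree $t$ whose left value-set is $\{\,s-1 : s\in S\setminus\{1\}\,\}\subseteq\{1,\dots,n-2\}$, recursing on $T_L$ and $T_R$. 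An induction on $n$ shows this map is well defined and invertible, the base case $n=1$ being vacuous.

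The routine parts are the two counting recursions; the step needing care is verifying that the recursive rule genuinely assembles into a single bijection rather than merely an equality of cardinalities, concretely that the shift $s\mapsto s-1$ identifying $(k-1)$-subsets of $\{2,\dots,n-1\}$ with those of $\{1,\dots,n-2\}$ is compatible with the standardizations performed in the two subtrees at every level of the recursion. I expect this bookkeeping, rather than any conceptual difficulty, to be the main obstacle; once it is settled, the count equals $N(t)=(n-1)!\prod_{v\in t}h(v)^{-1}$, which is exactly the hook-length formula, completing the proof.
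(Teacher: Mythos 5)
Your argument is correct and takes essentially the same route as the paper: both proofs decompose at the root, observe that admissibility forces $1$ into the left subtree and $n$ into the right one and is otherwise equivalent to admissibility of the two standardized sublabellings, and match this with the decomposition of a permutation at its maximal letter (the decreasing-tree recursion). The paper packages the second half as the dendriform identity $\G_\alpha\succ \G_1\prec\G_\beta$ in $\FQSym$, while you spell out the counting recursions and the shift $s\mapsto s-1$ explicitly; the resulting bijection is the one the paper describes directly in its appendix, and the standardization bookkeeping you flag is indeed routine.
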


\Proof
Let $T_1(\alpha)$, $T_2(\beta)$ be two admissible labellings.
Define
\begin{equation}
B(T_1(\alpha),T_2(\beta)) =
\sum_{\gf{\std(u)=\alpha,\ \std(v)=\beta}{1\in u,\ n\in v,\ \gamma=uv}}
  T(\gamma),
\end{equation}
where $T$ is the complete binary tree with left and right subtrees $T_1$ and
$T_2$.
The sum runs over a subset of the admissible labellings of $T$, and each
admissible labelling is obtained in one, and only one sum
$B(T_1(\alpha),T_2(\beta))$ for some $\alpha$ and $\beta$. Thus, the sum of
all admissible labellings of $T$ is $B_T(\gf{\bullet}{1})$, where for a
complete binary tree $T$ and a bilinear map $B$, $B_T(a)$ means the evaluation
of the same tree with $B$ at internal nodes and $a$ in all the leaves.
This yields a bijection between these labellings and the permutations of
$\SG_{n-1}$ occuring in $B'_T(1)$, where
\begin{equation}
B'(\G_\alpha,\G_\beta)
= \sum_{\gf{\std(u)=\alpha,\ \std(v)=\beta}{1\in u,\ n\in v,\ \gamma=uv}}
     \G\gamma
= \G_\alpha\succ 1\prec \G_\beta
\end{equation}
in $\FQSym$ (see \eqref{eq:gauche} and \eqref{eq:droit} below for the last
expression), which are precisely the permutations whose decreasing tree is
$t$.
\qed

For example, consider the complete binary tree $T$ 
\begin{equation}
{\xymatrix@C=2mm@R=2mm{
 *{} & *{} & *{} & {\bullet}\ar@{-}[dl]\ar@{-}[drr]  \\
 *{} & *{} & {\bullet}\ar@{-}[dl]\ar@{-}[dr] & *{} & *{} &
  {\bullet}\ar@{-}[dl]\ar@{-}[dr] \\
 *{} & {\bullet}\ar@{-}[dl]\ar@{-}[dr] & *{} & c &
  {\bullet}\ar@{-}[dl]\ar@{-}[dr] & *{} & f \\
 {a} & *{} & {b} & d & *{} & e & {} & {} & {} & {}
      }}
\end{equation}
The constraints on the labellings impose that $a=1$, $f=6$, $b<c$, and $d<e$.
There are therefore 6 different labellings:
\begin{equation}
123456,\ 124356,\ 125346,\ 134256,\ 135246, 145236.
\end{equation}
These labellings are indeed in bijection with the decreasing trees of shape
\begin{equation}
{\xymatrix@C=2mm@R=2mm{
 *{} & *{} & *{} & {F}\ar@{-}[dl]\ar@{-}[dr]  \\
 *{} & *{} & {C}\ar@{-}[dl] & *{} & {E}\ar@{-}[dl] \\
 *{} & {B} & *{} & {D} \\
      }}
\end{equation}
where the constraints fix that $F=5$, $B<C$, and $D<E$. Note that the
inequalities are exactly the same between capital and non-capital letters.

This observation allows a more direct description of the bijection. 
Start with a complete binary tree $T$ and an admissible labelling. Record into
each internal node the set of values of the leaves below it. Then remove $1$
from all subsets containing it, send each value $i$ to $i-1$ in all subsets
and then select in each subset (starting from the leaves and moving towards
the root) the smallest value that has not yet been used by the nodes below
it.

One easily checks that the resulting tree is indeed decreasing, that the
process can be reversed, and that starting with a decreasing tree, one indeed
obtains an admissible labelling of the corresponding complete binary tree.

\medskip
For example, this bijection  sends each labelling $(1,b,c,d,e,6)$ of the above
complete binary tree to the labelling $(B,C,D,E,F)=(b-1,c-1,d-1,e-1,5)$ of the
corresponding incomplete binary tree.

\subsection{The Tamari order on plane trees}

Recall the cover relation of the Tamari order on plane trees: starting from a
tree $\T$ and a vertex $x$ that is neither its root or a leaf, the trees
$\T'>\T$ covering $\T$ are obtained by cutting off the leftmost subtree of $x$
and grafting it back on the left of the parent of $x$.

The Hasse diagrams here are drawn upside down with respect to the usual
convention: the smallest element is at the top.

\def\arbta{\begin{picture}(3,4)\put(1,1){\circle*{0.7}}\put(2,2){\circle*{0.7}}\put(2,2){\Line(-1,-1)}\put(3,3){\circle*{0.7}}\put(3,3){\Line(-1,-1)}\put(3,3){\circle*{1}}\end{picture}}

\def\arbtb{\begin{picture}(3,4)\put(1,2){\circle*{0.7}}\put(2,1){\circle*{0.7}}\put(1,2){\Line(1,-1)}\put(3,3){\circle*{0.7}}\put(3,3){\Line(-2,-1)}\put(3,3){\circle*{1}}\end{picture}}

\def\arbtc{\begin{picture}(3,4)\put(1,3){\circle*{0.7}}\put(2,1){\circle*{0.7}}\put(3,2){\circle*{0.7}}\put(3,2){\Line(-1,-1)}\put(1,3){\Line(2,-1)}\put(1,3){\circle*{1}}\end{picture}}

\def\arbtd{\begin{picture}(3,3)\put(1,1){\circle*{0.7}}\put(2,2){\circle*{0.7}}\put(3,1){\circle*{0.7}}\put(2,2){\Line(-1,-1)}\put(2,2){\Line(1,-1)}\put(2,2){\circle*{1}}\end{picture}}

\def\arbte{\begin{picture}(3,4)\put(1,3){\circle*{0.7}}\put(2,2){\circle*{0.7}}\put(3,1){\circle*{0.7}}\put(2,2){\Line(1,-1)}\put(1,3){\Line(1,-1)}\put(1,3){\circle*{1}}\end{picture}}

\setlength\unitlength{1.7mm}
\newdimen\vcadre\vcadre=0.2cm 
\newdimen\hcadre\hcadre=0.2cm 

\def\cerp#1#2{\put(#1,#2){\circle*{0.7}}}
\def\cerg#1#2{\put(#1,#2){\circle*{1}}}

\def\arbtga{\begin{picture}(3,4)\cerg23\cerp{.5}1\cerp21\cerp{3.5}1
 \put(2,3){\Line(-1.5,-2)}
 \put(2,3){\Line( 0,-2)}
 \put(2,3){\Line( 1.5,-2)}
\end{picture}}

\def\arbtgb{\begin{picture}(3,6)\cerg25\cerp13\cerp33\cerp11
 \put(2,5){\Line(-1,-2)}
 \put(2,5){\Line( 1,-2)}
 \put(1,3){\Line( 0,-2)}
\end{picture}}

\def\arbtgd{\begin{picture}(3,6)\cerg25\cerp13\cerp33\cerp31
 \put(2,5){\Line(-1,-2)}
 \put(2,5){\Line( 1,-2)}
 \put(3,3){\Line( 0,-2)}
\end{picture}}

\def\arbtgc{\begin{picture}(3,6)\cerg25\cerp23\cerp11\cerp31
 \put(2,5){\Line( 0,-2)}
 \put(2,3){\Line( 1,-2)}
 \put(2,3){\Line(-1,-2)}
\end{picture}}

\def\arbtge{\begin{picture}(3,8)\cerg27\cerp25\cerp23\cerp21
 \put(2,7){\Line( 0,-2)}
 \put(2,5){\Line( 0,-2)}
 \put(2,3){\Line( 0,-2)}
\end{picture}}

\setlength\unitlength{1.7mm}
\newdimen\vcadre\vcadre=0.15cm 
\newdimen\hcadre\hcadre=0.15cm 

\begin{figure}[ht]
$\xymatrix@R=0.6cm@C=2mm{
 & *{\GrTeXBox{\arbta}}\arx1[ld]\arx1[rdd] \\
*{\GrTeXBox{\arbtb}}\arx1[d] \\
*{\GrTeXBox{\arbtc}}\arx1[rd]&
 & *{\GrTeXBox{\arbtd}}\arx1[ld] \\
 & *{\GrTeXBox{\arbte}} \\
}$
\hskip3cm
$\xymatrix@R=0.4cm@C=2mm{
 & *{\GrTeXBox{\arbtge}}\arx1[ld]\arx1[rdd] \\
*{\GrTeXBox{\arbtgc}}\arx1[d] \\
*{\GrTeXBox{\arbtgd}}\arx1[rd]&
 & *{\GrTeXBox{\arbtgb}}\arx1[ld] \\
 & *{\GrTeXBox{\arbtga}} \\
}$
\caption{\label{tam3}The Tamari order on incomplete binary trees and on the
corresponding plane trees (size $3$).}
\end{figure}

\newdimen\vcadre\vcadre=0.1cm 
\newdimen\hcadre\hcadre=0.1cm 

\def\cerp#1#2{\put(#1,#2){\circle*{0.7}}}
\def\cerg#1#2{\put(#1,#2){\circle*{1}}}

\def\arba{\begin{picture}(4,5)\cerp11\cerp22\cerp33\cerg44\put(2,2){\Line(-1,-1)}\put(3,3){\Line(-1,-1)}\put(4,4){\Line(-1,-1)}\end{picture}}

\def\arbb{\begin{picture}(4,5)\put(1,2){\circle*{0.7}}\put(2,1){\circle*{0.7}}\put(1,2){\Line(1,-1)}\put(3,3){\circle*{0.7}}\put(3,3){\Line(-2,-1)}\put(4,4){\circle*{0.7}}\put(4,4){\Line(-1,-1)}\put(4,4){\circle*{1}}\end{picture}}

\def\arbc{\begin{picture}(4,5)\put(1,3){\circle*{0.7}}\put(2,1){\circle*{0.7}}\put(3,2){\circle*{0.7}}\put(3,2){\Line(-1,-1)}\put(1,3){\Line(2,-1)}\put(4,4){\circle*{0.7}}\put(4,4){\Line(-3,-1)}\put(4,4){\circle*{1}}\end{picture}}

\def\arbd{\begin{picture}(4,5)\put(1,2){\circle*{0.7}}\put(2,3){\circle*{0.7}}\put(3,2){\circle*{0.7}}\put(2,3){\Line(-1,-1)}\put(2,3){\Line(1,-1)}\put(4,4){\circle*{0.7}}\put(4,4){\Line(-2,-1)}\put(4,4){\circle*{1}}\end{picture}}

\def\arbe{\begin{picture}(4,5)\put(1,4){\circle*{0.7}}\put(2,1){\circle*{0.7}}\put(3,2){\circle*{0.7}}\put(3,2){\Line(-1,-1)}\put(4,3){\circle*{0.7}}\put(4,3){\Line(-1,-1)}\put(1,4){\Line(3,-1)}\put(1,4){\circle*{1}}\end{picture}}

\def\arbf{\begin{picture}(4,5)\put(1,3){\circle*{0.7}}\put(2,2){\circle*{0.7}}\put(3,1){\circle*{0.7}}\put(2,2){\Line(1,-1)}\put(1,3){\Line(1,-1)}\put(4,4){\circle*{0.7}}\put(4,4){\Line(-3,-1)}\put(4,4){\circle*{1}}\end{picture}}

\def\arbg{\begin{picture}(4,4)\put(1,1){\circle*{0.7}}\put(2,2){\circle*{0.7}}\put(2,2){\Line(-1,-1)}\put(3,3){\circle*{0.7}}\put(4,2){\circle*{0.7}}\put(3,3){\Line(-1,-1)}\put(3,3){\Line(1,-1)}\put(3,3){\circle*{1}}\end{picture}}

\def\arbh{\begin{picture}(4,5)\put(1,4){\circle*{0.7}}\put(2,2){\circle*{0.7}}\put(3,1){\circle*{0.7}}\put(2,2){\Line(1,-1)}\put(4,3){\circle*{0.7}}\put(4,3){\Line(-2,-1)}\put(1,4){\Line(3,-1)}\put(1,4){\circle*{1}}\end{picture}}

\def\arbi{\begin{picture}(4,4)\put(1,2){\circle*{0.7}}\put(2,1){\circle*{0.7}}\put(1,2){\Line(1,-1)}\put(3,3){\circle*{0.7}}\put(4,2){\circle*{0.7}}\put(3,3){\Line(-2,-1)}\put(3,3){\Line(1,-1)}\put(3,3){\circle*{1}}\end{picture}}

\def\arbj{\begin{picture}(4,4)\put(1,2){\circle*{0.7}}\put(2,3){\circle*{0.7}}\put(3,1){\circle*{0.7}}\put(4,2){\circle*{0.7}}\put(4,2){\Line(-1,-1)}\put(2,3){\Line(-1,-1)}\put(2,3){\Line(2,-1)}\put(2,3){\circle*{1}}\end{picture}}

\def\arbk{\begin{picture}(4,5)\put(1,4){\circle*{0.7}}\put(2,3){\circle*{0.7}}\put(3,1){\circle*{0.7}}\put(4,2){\circle*{0.7}}\put(4,2){\Line(-1,-1)}\put(2,3){\Line(2,-1)}\put(1,4){\Line(1,-1)}\put(1,4){\circle*{1}}\end{picture}}

\def\arbl{\begin{picture}(4,5)\put(1,4){\circle*{0.7}}\put(2,2){\circle*{0.7}}\put(3,3){\circle*{0.7}}\put(4,2){\circle*{0.7}}\put(3,3){\Line(-1,-1)}\put(3,3){\Line(1,-1)}\put(1,4){\Line(2,-1)}\put(1,4){\circle*{1}}\end{picture}}

\def\arbm{\begin{picture}(4,4)\put(1,2){\circle*{0.7}}\put(2,3){\circle*{0.7}}\put(3,2){\circle*{0.7}}\put(4,1){\circle*{0.7}}\put(3,2){\Line(1,-1)}\put(2,3){\Line(-1,-1)}\put(2,3){\Line(1,-1)}\put(2,3){\circle*{1}}\end{picture}}

\def\arbn{\begin{picture}(4,5)\put(1,4){\circle*{0.7}}\put(2,3){\circle*{0.7}}\put(3,2){\circle*{0.7}}\put(4,1){\circle*{0.7}}\put(3,2){\Line(1,-1)}\put(2,3){\Line(1,-1)}\put(1,4){\Line(1,-1)}\put(1,4){\circle*{1}}\end{picture}}

\def\arbga{\begin{picture}(5,4)\cerp{.5}1\cerp21\cerp{3.5}1\cerp51
 \cerg{2.75}3
 \put(2.75,3){\Line(-2.25,-2)}
 \put(2.75,3){\Line(-.75,-2)}
 \put(2.75,3){\Line( .75,-2)}
 \put(2.75,3){\Line( 2.25,-2)}
\end{picture}}

\def\arbgb{\begin{picture}(5,5)\cerp{.5}0\cerp{.5}2\cerp22\cerp{3.5}2\cerg{2}4
 \put(2,4){\Line(-1.5,-2)}
 \put(2,4){\Line( 0,-2)}
 \put(2,4){\Line( 1.5,-2)}
 \put(.5,2){\Line( 0,-2)}
\end{picture}}

\def\arbgc{\begin{picture}(5,6)\cerp{1}1\cerp31\cerp23\cerp43\cerg35
 \put(3,5){\Line(-1,-2)}
 \put(3,5){\Line( 1,-2)}
 \put(2,3){\Line(-1,-2)}
 \put(2,3){\Line( 1,-2)}
\end{picture}}

\def\arbgd{\begin{picture}(5,5)\cerp{2}0\cerp{.5}2\cerp22\cerp{3.5}2\cerg{2}4
 \put(2,4){\Line(-1.5,-2)}
 \put(2,4){\Line( 0,-2)}
 \put(2,4){\Line( 1.5,-2)}
 \put(2,2){\Line( 0,-2)}
\end{picture}}

\def\arbgg{\begin{picture}(5,5)\cerp{3.5}0\cerp{.5}2\cerp22\cerp{3.5}2\cerg{2}4
 \put(2,4){\Line(-1.5,-2)}
 \put(2,4){\Line( 0,-2)}
 \put(2,4){\Line( 1.5,-2)}
 \put(3.5,2){\Line( 0,-2)}
\end{picture}}

\def\arbge{\begin{picture}(5,6)\cerp{1.5}1\cerp{3}1\cerp{4.5}1\cerp{3}3\cerg{3}5
 \put(3,5){\Line( 0,-2)}
 \put(3,3){\Line( 0,-2)}
 \put(3,3){\Line( -1.5,-2)}
 \put(3,3){\Line( 1.5,-2)}
\end{picture}}

\def\arbgf{\begin{picture}(4,8)\cerp{1}1\cerp{1}3\cerp15\cerp{3}5\cerg{2}7
 \put(2,7){\Line(-1,-2)}
 \put(2,7){\Line( 1,-2)}
 \put(1,5){\Line( 0,-2)}
 \put(1,3){\Line( 0,-2)}
\end{picture}}

\def\arbgh{\begin{picture}(5,8)\cerg{2}7\cerp{2}5\cerp13\cerp{3}3\cerp{1}1
 \put(2,7){\Line( 0,-2)}
 \put(2,5){\Line(-1,-2)}
 \put(2,5){\Line( 1,-2)}
 \put(1,3){\Line( 0,-2)}
\end{picture}}

\def\arbgi{\begin{picture}(5,6)\cerg{2}5\cerp{1}3\cerp33\cerp{1}1\cerp{3}1
 \put(2,5){\Line(-1,-2)}
 \put(2,5){\Line( 1,-2)}
 \put(1,3){\Line( 0,-2)}
 \put(3,3){\Line( 0,-2)}
\end{picture}}

\def\arbgj{\begin{picture}(5,6)\cerg{2}5\cerp{1}3\cerp33\cerp{2}1\cerp{4}1
 \put(2,5){\Line(-1,-2)}
 \put(2,5){\Line( 1,-2)}
 \put(3,3){\Line(-1,-2)}
 \put(3,3){\Line( 1,-2)}
\end{picture}}

\def\arbgk{\begin{picture}(4,8)\cerg{2}7\cerp{2}5\cerp23\cerp{1}1\cerp{3}1
 \put(2,7){\Line( 0,-2)}
 \put(2,5){\Line( 0,-2)}
 \put(2,3){\Line(-1,-2)}
 \put(2,3){\Line( 1,-2)}
\end{picture}}

\def\arbgl{\begin{picture}(4,8)\cerg{2}7\cerp{2}5\cerp13\cerp{3}3\cerp{3}1
 \put(2,7){\Line( 0,-2)}
 \put(2,5){\Line(-1,-2)}
 \put(2,5){\Line( 1,-2)}
 \put(3,3){\Line( 0,-2)}
\end{picture}}

\def\arbgm{\begin{picture}(4,8)\cerg{2}7\cerp{1}5\cerp35\cerp{3}3\cerp{3}1
 \put(2,7){\Line(-1,-2)}
 \put(2,7){\Line( 1,-2)}
 \put(3,5){\Line( 0,-2)}
 \put(3,3){\Line( 0,-2)}
\end{picture}}

\def\arbgn{\begin{picture}(2,10)\cerg{1}9\cerp{1}7\cerp15\cerp{1}3\cerp{1}1
 \put(1,9){\Line( 0,-2)}
 \put(1,7){\Line( 0,-2)}
 \put(1,5){\Line( 0,-2)}
 \put(1,3){\Line( 0,-2)}
\end{picture}}

\begin{figure}[ht]
\centerline{
\tiny
\rotateleft{
\hfill
\setlength\unitlength{1.7mm}
$\xymatrix@R=0.5cm@C=7mm{
&  & *{\GrTeXBox{\arba}}\arx1[ld]\arx1[dd]\arx1[rrddd]& \\
& *{\GrTeXBox{\arbb}}\arx1[ld]\arx1[rddd]& \\
*{\GrTeXBox{\arbc}}\arx1[d]\arx1[rd]
 &  & *{\GrTeXBox{\arbd}}\arx1[ld]\arx1[rdd]& \\
*{\GrTeXBox{\arbe}}\arx1[d]\arx1[rrdd]
 & *{\GrTeXBox{\arbf}}\arx1[ld]&  &  
 & *{\GrTeXBox{\arbg}}\arx1[lld]\arx1[dd]& \\
*{\GrTeXBox{\arbh}}\arx1[rd] & 
 & *{\GrTeXBox{\arbi}}\arx1[d]
 & *{\GrTeXBox{\arbj}}\arx1[lld]\arx1[rd]& \\
& *{\GrTeXBox{\arbk}}\arx1[rd]
 & *{\GrTeXBox{\arbl}}\arx1[d]&  
 & *{\GrTeXBox{\arbm}}\arx1[lld]& \\
 &  & *{\GrTeXBox{\arbn}}& \\
}$
\hfill
\setlength\unitlength{1.7mm}
$\xymatrix@R=0.5cm@C=7mm{
&  & *{\GrTeXBox{\arbgn}}\arx1[ld]\arx1[dd]\arx1[rrddd]& \\
& *{\GrTeXBox{\arbgk}}\arx1[ld]\arx1[rddd]& \\
*{\GrTeXBox{\arbgl}}\arx1[d]\arx1[rd]
 &  & *{\GrTeXBox{\arbgh}}\arx1[ld]\arx1[rdd]& \\
*{\GrTeXBox{\arbgm}}\arx1[d]\arx1[rrdd]
 & *{\GrTeXBox{\arbge}}\arx1[ld]&  &  
 & *{\GrTeXBox{\arbgf}}\arx1[lld]\arx1[dd]& \\
*{\GrTeXBox{\arbgj}}\arx1[rd] & 
 & *{\GrTeXBox{\arbgc}}\arx1[d]
 & *{\GrTeXBox{\arbgi}}\arx1[lld]\arx1[rd]& \\
& *{\GrTeXBox{\arbgg}}\arx1[rd]
 & *{\GrTeXBox{\arbgd}}\arx1[d]&  
 & *{\GrTeXBox{\arbgb}}\arx1[lld]& \\
 &  & *{\GrTeXBox{\arbga}}& \\
}$
\hfill
}}
\caption{\label{tam4}The Tamari order on incomplete binary trees and on the
corresponding plane trees (size $4$).}
\end{figure}

\def\arbgga#1#2#3#4{
\xymatrix@R=0.2cm@C=1mm{
 & {\GrTeXBox{#1}}\arx1[d] \\
 & {\GrTeXBox{#2}}\arx1[d] \\
 & {\GrTeXBox{#3}}\arx1[d] \\
 & {\GrTeXBox{#4}} \\
}
}

\def\arbggb#1#2#3#4{
\xymatrix@R=0.1cm@C=1mm{
 & {\GrTeXBox{#1}}\arx1[d] \\
 & {\GrTeXBox{#2}}\arx1[dl]\arx1[dr] \\
   {\GrTeXBox{#3}} && {\GrTeXBox{#4}} \\
}}

\def\arbggc#1#2#3#4{
\xymatrix@R=0.1cm@C=1mm{
 & {\GrTeXBox{#1}}\arx1[dl]\arx1[dr] \\
   {\GrTeXBox{#2}}\arx1[d] && {\GrTeXBox{#3}} \\
   {\GrTeXBox{#4}} \\
}}

\def\arbggd#1#2#3#4{
\xymatrix@R=0.1cm@C=1mm{
 & {\GrTeXBox{#1}}\arx1[dl]\arx1[dr] \\
   {\GrTeXBox{#2}} && {\GrTeXBox{#3}}\arx1[d] \\
 &&  {\GrTeXBox{#4}} \\
}}

\def\arbgge#1#2#3#4{
\xymatrix@R=0.1cm@C=1mm{
 & {\GrTeXBox{#1}}\arx1[dl]\arx1[d]\arx1[dr] \\
   {\GrTeXBox{#2}} & {\GrTeXBox{#3}} & {\GrTeXBox{#3}} \\
}}

\setlength\unitlength{1.7mm}
\newdimen\vcadre\vcadre=0.15cm 
\newdimen\hcadre\hcadre=0.05cm 

\begin{figure}[ht]
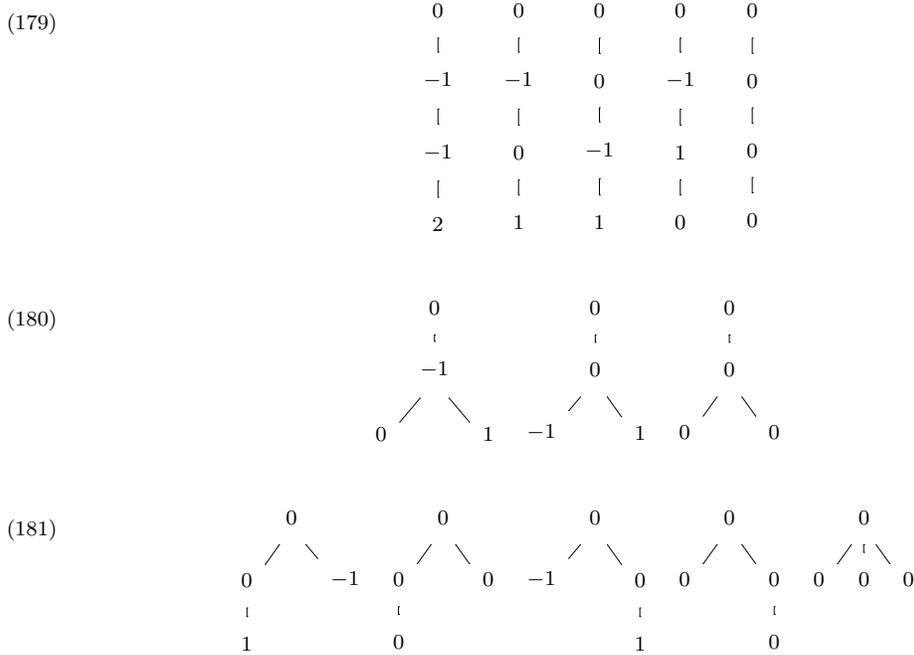

\begin{equation}
\tiny
\arbgga{0}{-1}{-1}{2}  
\arbgga{0}{-1}{0}{1} 
\arbgga{0}{0}{-1}{1}
\arbgga{0}{-1}{1}{0}
\arbgga{0}{0}{0}{0}
\end{equation}
\begin{equation}
\tiny
\arbggb{0}{-1}{0}{1}\
\arbggb{0}{0}{-1}{1}\
\arbggb{0}{0}{0}{0}
\end{equation}
\begin{equation}
\tiny
\arbggc{0}{0}{-1}{1}\
\arbggc{0}{0}{0}{0}\
\arbggd{0}{-1}{0}{1}\
\arbggd{0}{0}{0}{0}\
\arbgge{0}{0}{0}{0}
\end{equation}
\caption{\label{fig-flots}Small closed flows on plane trees with $4$ nodes}
\end{figure}

\newpage
\footnotesize

\end{document}